\documentclass[a4paper]{amsart}
\setlength{\vfuzz}{2mm} 
\setlength{\textwidth}{160mm}
\setlength{\textheight}{205mm} 
\setlength{\oddsidemargin}{0pt}
\setlength{\evensidemargin}{0pt}

%\usepackage[usenames,dvipsnames]{color}

%%%%%%%%%%%%%%%%%%%% packages used by math
\usepackage{amsfonts}
\usepackage{amssymb}
\usepackage{graphicx}    % standard LaTeX graphics tool
                         % when including figure files
\usepackage{multicol}    % used for the two-column index

%%%%%%%%%%%%%%%%%%%%%%% Personal packages
\usepackage{amsmath}
\usepackage[hidelinks]{hyperref}
\usepackage{tcolorbox}
\usepackage{mathrsfs}
\usepackage{amsthm}
\usepackage{lastpage}
\usepackage{fancyhdr}
\usepackage{accents}
\usepackage{stmaryrd}
\usepackage{tabularx}
\usepackage{tikz}
\usepackage{tikz-cd}
\usepackage{array}
\usepackage{mathtools}
\usepackage{comment}
\usepackage{subcaption}
\usepackage{adjustbox}
\usepackage{quiver}
\usepackage{colortbl}
\DeclareFontFamily{OT1}{pzc}{}
\DeclareFontShape{OT1}{pzc}{m}{it}{<-> s * [1.10] pzcmi7t}{}
\DeclareMathAlphabet{\mathpzc}{OT1}{pzc}{m}{it}
%%%%%%%%%%%%%%%%%%%%%%%%  environments  %%%%%%%%%%%%%%%%%%%%%%%%%%%%%
\newtheorem{thm}{Theorem}[section]
\newtheorem{prop}[thm]{Proposition}
\newtheorem{cor}[thm]{Corollary}
\newtheorem{lem}[thm]{Lemma}

\newtheorem*{thma}{Theorem 4.11}
\newtheorem*{thmb}{Theorem 5.2}
\newtheorem*{thmc}{Theorem 5.6}
\theoremstyle{definition}
\newtheorem{defin}[thm]{Definition}
\newtheorem{construction}[thm]{Construction}
\newtheorem{exm}[thm]{Example}
\newtheorem{ques}[thm]{Question}

\newtheorem{note}[thm]{Note}

%%%%%%%%%%%%%%%%%%%%%%   macros   %%%%%%%%%%%%%%%%%%%%%%%%%%%%%%%%%%%
\def\Z{\mathbb{Z}}    %%%%%%%%% the set of integers
 %%%%%%%%% the unit circle

\def\K{\mathcal{K}}
\def\H{\tilde{H}}

\def\P{\mathcal{P}}

\def\ZZ{\mathcal{Z}}
\def\w{\omega}

\def\Z{\mathbb{Z}}
\def\L{\mathcal{L}}

\def\ker{\text{ker }}

\def\T{\mathcal{T}}

\def\F{\mathbb{F}}
\newcommand{\gen}[1]{\langle#1\rangle}

%% User definitions:

%%%%%%%%%%%%%%%%5 amsart date fix

\usepackage{etoolbox}
\makeatletter
\patchcmd{\@maketitle}
  {\ifx\@empty\@dedicatory}
  {\ifx\@empty\@date \else {\vskip3ex \centering\footnotesize\@date\par\vskip1ex}\fi
   \ifx\@empty\@dedicatory}
  {}{}
\patchcmd{\@adminfootnotes}
  {\ifx\@empty\@date\else \@footnotetext{\@setdate}\fi}
  {}{}{}
\makeatother

\providecommand{\customgenericname}{}
\newcommand{\newcustomtheorem}[2]{%
  \newenvironment{#1}[1]
  {%
   \renewcommand\customgenericname{#2}%
   \renewcommand\theinnercustomgeneric{##1}%
   \innercustomgeneric
  }
  {\endinnercustomgeneric}
}\newcustomtheorem{customprop}{Proposition}
\newcustomtheorem{customthm}{\textbf{Theorem}}

%%%%%%%%%%%%%%%%%%%%%%%%%%%%%%%%%%%%%%%%%%%%%%%%%%%%%%%%%%%%%%%%%%%%%%%%%%%%%%%%%%%%%%%%%%%%%%%%%%%%%%%%%%%%%%%%%%%%%%%%%%%%%%%%%%%%%%%%%%%%%%%%%%%%%%%%%%%%%%%%%%%%%%%

\begin{document}

\title{Sphere triangulations and their double homology}

\author{Carlos Gabriel Valenzuela Ruiz}

\date{\today}

\maketitle
\thispagestyle{empty}
\begin{abstract}
    We study the double homology associated to triangulated spheres and present  two results. First, we explicitly compute the double homology for minimum degree sphere triangulations. Using a spectral sequence argument, we compute the effect of removing a maximal simplex of a non-neighborly sphere triangulation. Using these results and computational aid we generate complexes with exotic double homology rank. We also relate the double homology of a    complex with how neighborly it is.
\end{abstract}

\maketitle
\setcounter{tocdepth}{1}
\tableofcontents
\section{Introduction}
\noindent\textbf{The moment angle complex:} Toric topology is an emerging area in mathematics which lies in the intersection of combinatoric and algebraic topology. One of the most important objects in the area is the moment angle complex $\ZZ_\K$, which is obtained by considering the combinatorial structure of a simplicial $\K$ complex on $m$ vertices and imprinting it in a subspace of the polydisk $D^{2m}$ (see Section 2). This topological structure has been extensively studied, for a comprehensive introduction to these, we refer to the work of V. Buchstaber and T. Panov \cite{ToricTop}.\\

\noindent\textbf{Double Homology:} Given a moment-angle complex $\ZZ_\K$, it's of interest to study its \textit{bigraded double homology}, a notion introduced in \cite{docoho}. Other than the theoretical interest this invariant raises in toric topology, one can use this to solve a stability problem in some approaches to bigraded persistent homology, see \cite{Stab} for more information. The way double homology is defined is by endowing $H_*(\ZZ_\K)$ with a second differential which defines a cochain-complex structure in $H_*(\ZZ_\K)$ and computing its cohomology (See Section 3 for more details).\\

\noindent \textbf{Sphere triangulations:} A sphere triangulation is a simplicial complex whose geometric realization is a topological sphere. These form part of a much larger class of simplicial complexes, $\textit{Gorenstein}^*$ complexes. A particularly interesting feature of these is that $\K$ is such a complex if and only the cohomology of $\ZZ_\K$ is a Poincaré algebra \cite[Thm.4.6.8]{ToricTop}. This Poincaré duality persists to double homology, which makes it an interest set of examples to study.\\

\noindent \textbf{Results:} The study of sphere triangulations is an important object of study in combinatorial topology. A reasonable way to classify these is wether they can be seen or not as connected sums of other sphere triangulations of the same dimension (see Definition 2.2). When this is not possible, we say the triangulation is \textit{primitive}.\\

For 1-spheres, every triangulation except the boundary of $\Delta^2$ is non-primitive. In \cite{docoho} the authors computed the double homology associated with 1-spheres using this fact. As a natural extension of this, we obtain the following result

\begin{thma}
    Let $\K$ be a non-primitive $n$-sphere triangulation on $[m]$, then for any field $\F$
   \[HH_{-k,2l}(\ZZ_\K;\F)\cong\left\{\begin{array}{cl}
    \F & \text{for }(-k,2l)=(0,0),\;(-1,4),\;(n-(m-2),2(m-2)),\;(n+1-m,2m) \\
     0 & \text{else.} 
\end{array}\right.\qedhere\] 
\end{thma}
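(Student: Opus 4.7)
The plan is to exploit the connected-sum decomposition that non-primitiveness provides, together with the paper's own spectral-sequence machinery for removing a maximal simplex. Write $\K=\K_1\#_\sigma\K_2$ where $\sigma$ is the common $n$-simplex along which we glue. A first observation is that $\K$ must be non-neighborly: any vertex $u\in\K_1\setminus\sigma$ together with any vertex $v\in\K_2\setminus\sigma$ forms a non-edge of $\K$, since the only simplices linking the two sides of the connected sum are subsimplices of $\sigma$. Hence the paper's spectral sequence for deletion of a maximal simplex applies, and this is the tool I would use throughout.

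The natural induction is on the number of vertices $m$. For the base case, I would take the smallest non-primitive $n$-sphere, namely $\partial\Delta^{n+1}\#_\sigma\partial\Delta^{n+1}$ on $m_0=n+3$ vertices (the stellar subdivision of $\partial\Delta^{n+1}$ at one facet), and verify the four predicted classes directly from Hochster's formula for $H_*(\ZZ_\K)$ together with an explicit evaluation of the second differential; the complex is small enough that non-edges and higher missing subcomplexes can be enumerated by hand. For the inductive step, I would choose a maximal simplex $\tau$ lying in (say) $\K_1$ and away from $\partial\sigma$, so that the sphere obtained by the deletion spectral sequence still carries a non-primitive decomposition with fewer vertices. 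Applying the spectral sequence then compares the double homology of the smaller sphere with that of $\K$, and iterating down to the base case produces the stated formula. Throughout, the bigraded Poincaré duality for Gorenstein$^*$ complexes pairs $(0,0)\leftrightarrow(n+1-m,2m)$ and $(-1,4)\leftrightarrow(n-(m-2),2(m-2))$, so in practice I only need to track one of each dual pair and transport the other by duality. A consistency check at each stage is that the formula specializes correctly to the $1$-sphere computation of \cite{docoho}.

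The main obstacle I anticipate is controlling the $d_2$-differential of the deletion spectral sequence with enough precision to obtain ranks exactly equal to $1$ in the middle bidegrees. At the $E_1$ page the group $H_{-1,4}(\ZZ_\K)$ has rank equal to the number of non-edges of $\K$, which grows with $m$, and dually the bidegree $(n-(m-2),2(m-2))$ also receives contributions from many high-dimensional missing subcomplexes. The delicate point is to show that the second differential cuts these ranks down by precisely the required amount, leaving the single surviving class that morally comes from the separating $(n-1)$-sphere $\partial\sigma$ supplied by the connected-sum decomposition; I expect the combinatorial fact that $\partial\sigma$ is the unique such separating subcomplex forced by non-primitiveness to be the key input that makes the cancellation exact.
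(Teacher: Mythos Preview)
Your inductive step does not work as described. Removing a maximal $n$-simplex $\tau$ from an $n$-sphere $\K$ produces $\L^\tau=\K\setminus\{\tau\}$, which is a triangulated $n$-disk on the \emph{same} vertex set $[m]$; it is neither a sphere nor does it have fewer vertices. So there is no ``smaller non-primitive sphere'' to which an inductive hypothesis could apply, and Theorem~5.2 compares $HH_*(\ZZ_\K)$ with $HH_*(\ZZ_{\L^\tau})$, not with the double homology of any sphere on $[m-1]$. The later paragraph about controlling a $d_2$-differential to cut the rank of $H_{-1,4}$ down to $1$ is likewise aimed at the wrong target: in the paper's spectral sequence that group never appears on any page; the filtration is only used to compute $H^*(\ker f_n)$, and it collapses at $E_1$.

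The paper's argument is both simpler and moves in the opposite direction: instead of deleting a simplex, \emph{add back} the $n$-simplex $\sigma$ along which the connected sum was taken. Then $\L:=\K\cup\{\sigma\}=\K^1\sqcup_\sigma\K^2$ is wedge-decomposable, so $HH_*(\ZZ_\L)$ is known completely by Theorem~3.4. Because $|\sigma|=n+1$, Lemma~3.5 gives $HH_j^*(\ZZ_\K;\F)\cong HH_j^*(\ZZ_\L;\F)$ for all $j\le n-1$; and Poincar\'e duality (Theorem~4.3) means one only needs $j\le\tfrac{n+1}{2}<n-1$ when $n\ge2$. No induction, no spectral sequence, and no delicate rank count is required. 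Your observation that $\K$ is non-neighborly is correct but not needed for this theorem; it is the mechanism behind Theorem~5.2, which is a separate result.
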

    
    A classical approach to studying sphere triangulations is to see how they behave under geometrical operations. To get a feeling of this, we developed a software that lets us compute the double homology of a complex with coefficients in $\Z/2$ (open source code at \cite{script}). Using this program we noticed that, when removing a maximal simplex of a sphere, the double homology either remained constant or two specific classes disappeared. As it turns out, this is related to how \textit{connected} the vertices of the sphere are.\\

    We say a simplicial complex is \textit{neighborly} if all pairs of vertices are adjacent. We obtained the following result
    
    \begin{thmb}
        Let $\K$ be an $n$-sphere triangulation on $[m]$, and let $\F$ be a field. For every maximal simplex $\sigma$ of $\K$ define $\L^\sigma=\K\setminus\{\sigma\}$.
        \begin{itemize}
            \item [$(i)$] If $\K$ is neighborly then for every maximal simplex $\sigma\in\K$
            \[dim\;HH_*(\ZZ_{\L^\sigma};\F)=dim\;HH_*(\ZZ_\K;\F)\]

            \item [$(ii)$] If $\K$ isn't neighborly then there exists a maximal simplex $\sigma\in\K$ such that 
            \[dim\;HH_*(\ZZ_{\L^\sigma};\F)=dim\;HH_*(\ZZ_\K;\F)-2.\]
        \end{itemize}
    \end{thmb}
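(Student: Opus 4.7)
The plan is to pass from $HH_*$ to $H_*$ via Hochster's decomposition, and then analyze the change in double homology under deletion of $\sigma$ through a spectral sequence whose behavior is governed by the neighborliness dichotomy. Since $\sigma$ is maximal in $\K$, the full subcomplexes satisfy $\K_I = \L^\sigma_I$ whenever $\sigma \not\subseteq I$, while for $\sigma \subseteq I$ they differ by attachment of the single top-dimensional cell $\sigma$. The long exact sequence of the pair $(\K_I, \L^\sigma_I)$ then shows that $\H^*(\K_I)$ and $\H^*(\L^\sigma_I)$ differ by at most one $\F$-summand, with the prototypical case $I = \sigma$ carrying the extra class as the fundamental class of $\partial\sigma \simeq S^{|\sigma|-2}$. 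Summing over $I$, Hochster's formula pinpoints exactly which bidegrees of $H_*(\ZZ_\K;\F)$ and $H_*(\ZZ_{\L^\sigma};\F)$ disagree.

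Next I would set up the spectral sequence associated to the short exact sequence of cellular chain complexes $0 \to C_*(\ZZ_{\L^\sigma}) \to C_*(\ZZ_\K) \to Q \to 0$, where $Q$ is the relative quotient, filtered by the Hochster pieces above. The second differential $d_H$ respects this filtration, giving a spectral sequence that measures the discrepancy between $HH_*(\ZZ_\K;\F)$ and $HH_*(\ZZ_{\L^\sigma};\F)$, so the problem reduces to computing the $d_H$-homology of $Q$. In case (i), neighborliness forces $\H^0(\K_I) = 0$ for every $|I| = 2$, and this vanishing propagates through the Koszul description of $d_H$ to ensure that every extra class in $Q$ cancels against a class already present in $H_*(\ZZ_{\L^\sigma};\F)$, so the spectral sequence abuts to zero. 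In case (ii), choose a non-edge $\{i,j\}$ witnessing non-neighborliness and a maximal simplex $\sigma$ with $\{i,j\} \not\subseteq \sigma$; the nonzero class in $\H^0(\K_{\{i,j\}})$ now obstructs the analogous $d_H$-pairing on $Q$, leaving exactly two surviving classes, mutually paired by the Poincaré duality that $HH_*(\ZZ_\K;\F)$ inherits from the Gorenstein$^*$ property of the sphere triangulation.

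The main obstacle I expect is giving the $d_H$-pairing analysis on $Q$ a rigorous underpinning, together with establishing the existence of a maximal simplex $\sigma$ in case (ii) avoiding a prescribed non-edge (which I expect to follow from a pigeonhole argument on the link structure of the non-edge vertices in an $n$-sphere triangulation). This hinges on describing $d_H$ in Stanley-Reisner terms: the defining ideals of $\K$ and $\L^\sigma$ differ by exactly one monomial generator $x^\sigma$, and one must show that multiplication by $x^\sigma$ interacts with the Koszul differential underlying the double complex in a way that depends sensitively on completeness of the $1$-skeleton. Once this is pinned down, the precise drop of $-2$ in case (ii) (rather than $-1$) is forced by Poincaré duality on $HH_*$, which pairs the two residual classes in complementary bidegrees.
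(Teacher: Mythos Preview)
Your outline gestures at the right ingredients (Hochster decomposition, a short exact sequence comparing $\L^\sigma$ and $\K$, a spectral sequence, Poincar\'e duality), but there are two concrete gaps that would prevent the argument from going through as written.

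First, your choice of $\sigma$ in case~(ii) is vacuous: if $\{i,j\}$ is a non-edge of $\K$, then \emph{no} simplex of $\K$ can contain $\{i,j\}$, so the condition ``$\{i,j\}\not\subseteq\sigma$'' is satisfied by every maximal simplex automatically. This cannot be the selection principle that forces the rank to drop. The paper's proof instead arranges that $\sigma$ \emph{contains} one endpoint of a non-edge (say $1\in\sigma$ with $\{1,m\}\notin\K$); the drop is then witnessed by an explicit class $[\alpha]\in\H_{n-1}(\K_{[2,m-1]})$ represented by $lk(m)$, which one tracks through the snake lemma to show the connecting homomorphism $\delta$ has rank~$1$. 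Without this specific geometric choice you have no handle on why exactly two classes survive rather than none.

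Second, your short exact sequence is set up at the level of cellular chains $C_*(\ZZ_-)$, but the differential $d_H$ lives on $H_*(\ZZ_-)$; you cannot simply ``filter $Q$ by Hochster pieces'' and read off $HH$ without first passing to homology. The paper avoids this by working directly with the cochain complexes $CH_j^*(\ZZ_-)$ one homological degree $j$ at a time: for $j<n$ a skeleton argument gives an isomorphism, for $j=n+1$ a direct count shows a drop of $1$, and for $j=n$ Mayer--Vietoris on each full subcomplex shows the comparison map $f_n\colon CH_n^*(\ZZ_{\L^\sigma})\to CH_n^*(\ZZ_\K)$ is surjective with an explicitly computable kernel. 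The spectral sequence is then applied only to $\ker f_n$ (not to a cellular quotient), collapsing to a single copy of $\F$ in degree $m-1$, and the whole question reduces to whether $\delta$ hits it. Your description of ``cancellation governed by neighborliness'' is the right intuition, but the actual mechanism is that in the neighborly case $HH_n^{m-2}(\ZZ_\K)\cong HH_1^2(\ZZ_\K)=0$ by duality, forcing $\delta=0$; this is not a Koszul-type pairing on $Q$ but a direct vanishing of the source of $\delta$.
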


    A generalized notion of neighborly is being $p$-\textit{neighborly} for some positive integer $p$ (see definition 5.1). The previous theorem suggests there's an intimate relation between double homology and how neighborly a complex is. When studying the bigraded homology of $\ZZ_\K$ this is evident, as most of the components in the Hochster decomposition disappear depending on how neighborly the complex is. For double homology the relation is slightly more subtle but it ammounts to finding an appropiate cycle in $CH$.

    \begin{thmc}
        A $(p-1)$-neighborly simplicial complex $\K$ is $p$-neighnbourly if and only if 
        \[HH_{-1,2p+2}(\ZZ_\K)=0\]
    \end{thmc}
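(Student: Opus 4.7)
The plan is to reduce the computation of $H^{-1,2p+2}(\ZZ_\K;\F)$ via Hochster's formula to a count of missing $(p+1)$-subsets, then to study the second differential $d_H$ of the double complex on this piece. Applying Hochster's formula gives
\[H^{-1,2p+2}(\ZZ_\K;\F)\cong\bigoplus_{|J|=p+1}\tilde H^{p-1}(\K_J;\F).\]
Under the $(p-1)$-neighborly hypothesis, every $p$-subset of $[m]$ is a face of $\K$; hence for every $|J|=p+1$ the full subcomplex $\K_J$ contains the boundary $\partial\Delta^p$ of the simplex on $J$. Thus $\K_J$ equals $\partial\Delta^p\simeq S^{p-1}$ when $J\notin\K$ and $\Delta^p$ when $J\in\K$, and
\[H^{-1,2p+2}(\ZZ_\K;\F)\cong\F^{N},\qquad N=\#\{J\subseteq[m]\,:\,|J|=p+1,\;J\notin\K\}.\]
The same argument shows $H^{-k,2l}(\ZZ_\K;\F)=0$ whenever $l\le p$, which will be used to isolate the relevant piece in the double complex.

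For the forward direction, if $\K$ is $p$-neighborly then no $(p+1)$-subset is missing, so $N=0$ and $H^{-1,2p+2}(\ZZ_\K;\F)=0$, forcing its subquotient $HH_{-1,2p+2}(\ZZ_\K;\F)$ to vanish.

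For the reverse direction, suppose $\K$ is $(p-1)$-neighborly but not $p$-neighborly, so $N\ge 1$. Because $H^{-k,2l}=0$ for $l\le p$, no class enters $H^{-1,2p+2}$ via $d_H$ from a lower-internal-degree bidegree, so $HH_{-1,2p+2}(\ZZ_\K;\F)$ equals the kernel of the outgoing differential $d_H\colon H^{-1,2p+2}\to H^{-2,2p+4}$. To produce a non-zero kernel element, I use the explicit description of $d_H$ from \cite{docoho}. For each missing $(p+1)$-subset $J$, the Hochster generator $[J]\in H^{-1,2p+2}(\ZZ_\K;\F)$ is represented by the fundamental cocycle of $\partial\Delta^p$ on $J$ in the Koszul/Stanley--Reisner model. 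The image $d_H[J]$ decomposes, via the derivation property of $d_H$, into a signed sum indexed by $(p+2)$-supersets $J'\supset J$, with each summand landing in the Hochster piece $\tilde H^{p-1}(\K_{J'};\F)$. A combinatorial/linear-algebraic analysis — using the Stanley--Reisner vanishing $v_J=0$ in $\F[\K]$, the derivation relation $d_H^2=0$, and the bidegree constraints forced by $(p-1)$-neighborliness on adjacent Hochster summands — then shows that $\ker d_H\ne 0$.

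The main obstacle is the reverse direction. The forward direction is a direct consequence of Hochster's formula plus $(p-1)$-neighborliness, but producing a surviving cocycle in $\ker d_H$ requires explicit chain-level work with the formula for $d_H$ from \cite{docoho} together with a careful sign/cancellation argument tracking how $d_H$ interacts with the Hochster summands indexed by $(p+2)$-subsets. This is where the bulk of the technical effort lies.
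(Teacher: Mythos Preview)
Your forward direction and the reduction of the reverse direction to showing $\ker\bigl(d: CH_{-1,2p+2}\to CH_{-2,2p+4}\bigr)\neq 0$ are correct and match the paper's setup. The observation that nothing maps into bidegree $(-1,2p+2)$, because $(p-1)$-neighborliness kills $CH_{0,2p}$, is exactly right.

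The gap is in the final step. You describe what $d_H[J]$ looks like for a single missing face $J$ and then invoke an unspecified ``combinatorial/linear-algebraic analysis'' together with $d_H^2=0$ and the Stanley--Reisner relation $v_J=0$ to conclude $\ker d_H\neq 0$. None of these ingredients, as stated, produces a nonzero cocycle: $d_H^2=0$ constrains compositions, not the kernel at a fixed bidegree, and the Stanley--Reisner vanishing is not what drives the cancellation here. You have not said \emph{which} linear combination of the $[J]$'s is closed, and for a single missing $J$ the class $[J]$ is typically \emph{not} a cocycle.

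The paper fills this gap with an explicit cycle (Lemma~\ref{restriction}): take
\[
\sigma=\sum_{\substack{J\subseteq[m]\\|J|=p+1}}\alpha_J,\qquad \alpha_J=[d^{\mathrm{simp}}(\sigma(J))]\in \tilde H_{p-1}(\K_J),
\]
i.e.\ the sum over \emph{all} $(p+1)$-subsets, not just the missing ones. Reindexing the double sum in $d(\sigma)$ by the $(p+2)$-superset $J'$ gives, for each $J'$, the chain $\sum_{x\in J'}(-1)^{\varepsilon(x,J')}d^{\mathrm{simp}}(\sigma(J'\setminus\{x\}))=d^{\mathrm{simp}}d^{\mathrm{simp}}(\sigma(J'))=0$. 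Thus $\sigma$ is a cocycle; pushed into $CH(\ZZ_\K)$ the terms with $J\in\K$ vanish, and restricting to any missing $I$ shows the class is nonzero. The whole point is that the cancellation is governed by the simplicial identity $d^{\mathrm{simp}}\circ d^{\mathrm{simp}}=0$, which only becomes visible once you sum over all $J$ rather than treating a single generator.
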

\noindent\textbf{Organization:}
\begin{itemize}
    \item Section 2 consists of basic homological algebra and topology background.
    \item In Section 3 we review the double homology construction and some basic properties.
    \item Section 4 starts by computing the double homology of a large class of spheres and present a discussion regarding the double homology of 2-spheres. 
    \item In Section 5 we discuss the effect of removing a maximal simplex of a sphere in double homology. We also relate double homology to how neighborly a complex is.
\end{itemize}

\noindent\textbf{Acknowledgements:} This work was completed thanks to the financial support of Dr. Donald Stanley through NSERC RGPIN-05466-2020. I'd like to further thank Dr. Stanley for all the interesting discussions that led to the culmination of this manuscript.

\section{Background and notation}

\subsection{Simplicial complexes}
Given a totally ordered set $S$ and a nonempty $\K\subseteq \P(S)$, we say the pair $(\K,S)$ is a \textit{simplicial complex} if for every $\sigma\in \K$ and $\tau\subseteq \sigma$, $|\sigma|<\infty$ and $\tau\in \K$. We may abuse notation and say $\K$ is a simplicial complex on $S$ and refer to the vertex set $S$ as $V(\K)$. We call elements of $S$ vertices and elements of $\K$ faces. If $\sigma\in \K$, we say $\tau\subseteq \sigma$ is a \textit{facet} of $\sigma$ if $|\tau|=|\sigma|-1$.\\

Throughout this work, unless otherwise specified, we will assume the vertex set is $[m]$ where $[m]:=[1,m]$ and $[a,b]:=\{x\in\Z:a\leq x\leq b\}$. Also, we'll assume $\K$ has no ghost vertices, that is for every $x\in V(\K)$ we have that $\{x\}\in \K$. \\

For $S\subseteq [m]$, we define its \textit{boundary} as $\partial(S)=\mathcal{P}(S)\setminus\{S\}$. Notice that for a simplicial complex $\K$, $\L=\K\cup\{S\}$ is a simplicial complex if and only if $\partial(S)\subseteq \K$.\\

Given $\sigma\in \K$, we define its \textit{link} as the simplices that surround $\sigma$, explicitly this is
\[lk(\sigma):=\{\tau\in \K:\sigma\cap\tau=\emptyset\text{ and }\sigma\cup\tau\in \K\}.\]

For any integer $d$, the $d$-skeleton of a simplicial complex $\K$ is $sk_d(\K):=\{\sigma\in \K:|\sigma|\leq d+1\}$. We can regard $sk_1(\K)$ as a graph where the edges are the $1$-simplicies, in this spirit, we define the \textit{degree of a vertex} $x\in [m]$ as $deg(x)=|V(lk(x))|$ and the \textit{minimal degree of $\K$} as $\delta(\K):=\min\{deg(x):x\in V(\K)\}$.\\

For any simplicial complex $\K$ we denote its geometric realization as $g(\K)$, to see this construction, refer to \cite[\S3.1]{Spanier}. The homology of a simplicial complex is defined to be the simplicial homology of~$g(\K)$.\\

Given two simplicial complexes $\K_1$ and $\K_2$ on the sets $S_1, S_2$, we define their \textit{join} as the simplicial complex on $S_1\sqcup S_2$ (with order induced by the respective orders and $S_1<S_2$) given by
\[\K_1*\K_2:=\{\sigma\sqcup \tau:\sigma\in \K_1,\;\tau\in \K_2\}.\]
This definition is compatible with that of the join of topological spaces and the geometric realization functor in the sense that $g(\K_1*\K_2)\cong g(\K_1)*g(\K_2)$. To simplify notation, we'll denote by $\K^{*n}$ the simplicial complex obtained by joining $\K$ with itself $n$ times.\\

Finally, the following are two important constructions with simplicial complexes we'll use later

\begin{defin}
    A simplicial complex $\K$ is \textit{wedge-decomposable} if there are a pair of subcomplexes $\K^1,\K^2$ such that $\K^1\cup\K^2=\K$ and $\K^1\cap\K^2=\gen{\sigma}$ where $\sigma$ is a (possibly empty) proper face of both $\K^1$ and $\K^2$. We denote this as $\K=\K^1\sqcup_\sigma \K^2$
\end{defin}
\begin{defin}
    Let $\K^1\sqcup_\sigma \K^2$ be a wedge-decomposable simplicial complex, we define the \textit{connected sum of $\K^1$ and $\K^2$ through $\sigma$} as
    \[\K^1\#_\sigma\K^2:=\left(\K^1\sqcup_\sigma\K^2\right)\setminus\{\sigma\}\]
\end{defin}

\subsection{Exact sequences} Given a short exact sequence of cochain complexes

\[\begin{tikzcd}[ampersand replacement=\&]
	0 \& {(A^*,d_A)} \& {(B^*,d_B)} \& {(C^*,d_C)} \& 0
	\arrow[from=1-1, to=1-2]
	\arrow["f", from=1-2, to=1-3]
	\arrow["g", from=1-3, to=1-4]
	\arrow[from=1-4, to=1-5]
\end{tikzcd}\]
we get a long exact sequence in cohomology 
% https://q.uiver.app/#q=WzAsNixbMiwwLCJIXmkoQSkiXSxbMywwLCJIXmkoQikiXSxbNCwwLCJIXmkoQykiXSxbMSwwLCJIXntpKzF9KEMpIl0sWzUsMCwiXFxjZG90cyJdLFswLDAsIlxcY2RvdHMiXSxbMywwLCJcXHBhcnRpYWwiXSxbMCwxLCJmXioiXSxbMSwyLCJnXioiXSxbMiw0XSxbNSwzXV0=
\[\begin{tikzcd}[ampersand replacement=\&]
	\cdots \& {H^{i}(C)} \& {H^{i+1}(A)} \& {H^{i+1}(B)} \& {H^{i+1}(C)} \& \cdots
	\arrow["\delta", from=1-2, to=1-3]
	\arrow["{f^*}", from=1-3, to=1-4]
	\arrow["{g^*}", from=1-4, to=1-5]
	\arrow[from=1-5, to=1-6]
	\arrow[from=1-1, to=1-2]
\end{tikzcd}\]
where the \textit{connecting homomorphism} $\delta$ is constructed as follows:
\begin{itemize}
    \item Let $\alpha$ be a cycle in $C$, that is, $d_C(\alpha)=0$.
    \item Pick a lift $\beta\in B^{i}$ such that $g(\beta)=\alpha$, this can be done as $g$ is surjective
    \item Apply the differential of $B$, obtaining $d_B(\beta)\in B^{i+1}$
    \item Lift through $f$ and take its cohomology class. This can be done as $gd_B(\beta)=d_Cg(\beta)=d_C(\alpha)=0$ and so $\beta\in$ ker $g=$ im $f$. 
\end{itemize}
That is, $\delta([\alpha])=[f^{-1}(d_B(g^{-1}(\alpha)))]$. For more details about this procedure, one can refer to \cite[\S1.3]{weibel1994introduction}. For chain complexes it works the same way but the arrows are reversed.\\

A particularly useful example is the \textit{Mayer-Vietoris} long exact sequence. Here we present the version for simplicial complexes.

\begin{prop}
    Let $\K$ be a simplicial complex on $[m]$, and $\K_1$, $\K_2$ be simplicial complexes such that $\K=\K_1\cup\K_2$, we have the following long exact sequence 

    \[\begin{tikzcd}[ampersand replacement=\&]
	\cdots \& {\H_{i}(\K_1\cap\K_2)} \& {\H_{i}(\K_1)\oplus \H_i(\K_2)} \& {\H_{i}(\K)} \& {\H_{i-1}(\K_1\cap\K_2)} \& \cdots
	\arrow["\varphi^*", from=1-2, to=1-3]
	\arrow["{\psi^*}", from=1-3, to=1-4]
	\arrow["{\delta^*}", from=1-4, to=1-5]
	\arrow[from=1-5, to=1-6]
	\arrow[from=1-1, to=1-2]
\end{tikzcd}\]
where $\varphi(x)=(x,-x)$ and $\psi(x,y)=x+y$.
\end{prop}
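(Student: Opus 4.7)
The plan is to build a short exact sequence of augmented simplicial chain complexes and then invoke the long-exact-sequence machinery recorded in Section 2.2. Write $\iota_k : \K_1 \cap \K_2 \hookrightarrow \K_k$ for the inclusions ($k=1,2$) and $j_k : \K_k \hookrightarrow \K$, and denote the chain-level maps they induce by the same symbols. Define
\[
\varphi : \tilde{C}_*(\K_1 \cap \K_2) \longrightarrow \tilde{C}_*(\K_1) \oplus \tilde{C}_*(\K_2),\qquad \varphi(x) = (\iota_1 x,\, -\iota_2 x),
\]
\[
\psi : \tilde{C}_*(\K_1) \oplus \tilde{C}_*(\K_2) \longrightarrow \tilde{C}_*(\K),\qquad \psi(x,y) = j_1 x + j_2 y.
\]
These are chain maps because the boundary operator on each $\tilde{C}_*(\K_k)$ is the restriction of the boundary on $\tilde{C}_*(\K)$.

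Next I would verify the three conditions for a short exact sequence. Injectivity of $\varphi$ is immediate, since $\iota_1$ preserves the coefficient of each simplex. The composition $\psi \circ \varphi$ vanishes because $j_1 \iota_1 = j_2 \iota_2$ as inclusions of $\K_1\cap\K_2$ into $\K$, so the two terms cancel by the built-in sign. Surjectivity of $\psi$ uses the hypothesis essentially: since $\K = \K_1 \cup \K_2$ as a set-theoretic union of subcomplexes, every simplex $\sigma \in \K$ lies in some $\K_k$, so any chain in $\tilde{C}_*(\K)$ can be written as $j_1 x + j_2 y$ with $x\in \tilde{C}_*(\K_1)$ and $y\in \tilde{C}_*(\K_2)$ (assigning simplices of the intersection to $\K_1$, say).

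The main content is exactness at the middle term. Given $(x,y)$ with $j_1 x + j_2 y = 0$, I would compare coefficients on each simplex $\sigma$ of $\K$: if $\sigma \in \K_1 \setminus \K_2$ only $x$ contributes, forcing its coefficient in $x$ to vanish, and symmetrically for $\K_2 \setminus \K_1$ and $y$; for $\sigma \in \K_1 \cap \K_2$ the coefficients in $x$ and $y$ must cancel, so they are negatives of each other. Letting $z \in \tilde{C}_*(\K_1 \cap \K_2)$ be the chain whose coefficient on $\sigma$ equals the coefficient of $\sigma$ in $x$, we obtain $\varphi(z) = (x,y)$. This establishes the short exact sequence
\[
0 \to \tilde{C}_*(\K_1\cap\K_2) \xrightarrow{\varphi} \tilde{C}_*(\K_1)\oplus\tilde{C}_*(\K_2) \xrightarrow{\psi} \tilde{C}_*(\K) \to 0,
\]
after which the long-exact-sequence recipe of Section 2.2 produces the advertised Mayer--Vietoris sequence, with $\delta^*$ the connecting homomorphism built there. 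The only real obstacle is the coefficient bookkeeping in the exactness-at-the-middle step; working simplicially avoids the barycentric-subdivision subtlety needed for the topological version of the statement.
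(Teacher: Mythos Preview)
Your argument is correct and is the standard derivation of the simplicial Mayer--Vietoris sequence. The paper does not actually prove this proposition: it simply states it and refers the reader to \cite[\S4.1]{Spanier}, so there is no in-paper proof to compare against. Your write-up supplies exactly the details one would find there, and the observation that working simplicially sidesteps the subdivision/excision issues of the singular version is apt.
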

\noindent For more details on this we refer to \cite[\S4.1]{Spanier}.
\subsection{Spectral Sequence of a Filtered Cochain Complex}
Given a cochain complex $(C,d)$ in an abelian category $\mathcal{A}$, a \textit{decreasing filtration} of it is a sequence of cochain complexes $(F_i,d)$ in $\mathcal{A}$ such that
\begin{itemize}
    \item $F_{0}=C$,
    \item $F_{i+1}\subseteq F_i$
    \item $d(F^k_i)\subseteq F^{k+1}_i$
\end{itemize}
The filtration is bounded if $F_j=0$ for some $j\geq 0$. This filtration induces a spectral sequence 
which converges to the associated graded of $H^*(C)$ by taking the $E_0-page$
\[E_0^{p,q}=\frac{F_p^{p+q}}{F_{p+1}^{p+q}}\]
and the differential $d_0^{p,q}:E_0^{p,q}\to E_0^{p,q+1}$ is obtained by taking a lift to $F_{p}^{p+q}$ and then applying the composition
\[F_{p}^{p+q}\xrightarrow{d}F_{p}^{p+q+1}\xrightarrow{q} \frac{F_{p}^{p+q+1}}{F_{p+1}^{p+q+1}}\]

Later pages are obtained by taking $E_{i+1}= H(E_{i},d_i)$, for more details on how to obtain the differential $d_i$ we refer to \cite[\S5.4]{weibel1994introduction}. The usefulness of such a spectral sequence, relies in the fact that if the filtration is bounded, then $E_\infty^{p,q}$ is the associated graded of $H^*(C)$, which let's us recover $H^* (C)$ itself.

%%%%%%%%%%%%%%%%%%%%%%%%%%%%%%%%%%%%%%%%%%%%%%%%%%%%%%%%%%%%%%%%%%%%%%%%%%%%%%%%%%%%%%%%%%%%%%%%
\section{Double homology}
    Let $\K$ be a simplicial complex on $[m]$, its \textit{moment-angle complex} $\ZZ_\K$ is defined as the subspace of $(D^2)^m$ given by the polyhedral product
    \[\ZZ_\K:=(D^2,S^1)^\K=\bigcup_{\sigma\in \K}(D^2,S^1)^\sigma\]
    where $(D^2,S^1)^\sigma:=\left(\prod\limits_{i\in \sigma} D^2\right)\times\left(\prod\limits_{i\in [m]\setminus\sigma} S^1\right)$ with the order in the product induced by the order in $[m]$. For more details and examples we refer to \cite[\S4.1]{ToricTop}.\\

    The cohomology ring of $\ZZ_\K$ can be easily obtained through the following theorem
    \begin{thm}[{{\cite[\S~4.5]{ToricTop}}}]
    There are isomorphisms of bigraded commutative algebras
    \begin{align}
        H^{*}(\ZZ_\K)&\cong \text{\normalfont Tor }_{\Z[v_1,\ldots,v_m]}(\Z,\Z[\K])\\
                &\cong H(\Lambda[u_1,\ldots,u_m]\otimes \Z[\K],d)\\
                &\cong\bigoplus_{J\subseteq[m]}\H^*(\K_J),
    \end{align}
    where \normalfont{bideg }$u_i=(-1,2)$, \normalfont{bideg }$v_i=(0,2)$ and $d$ is given by $d(v_i)=0$ and $d(u_i)=v_i$.
\end{thm}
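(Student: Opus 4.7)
The plan is to establish the three isomorphisms in sequence: the first is purely algebraic, the second genuinely topological, and the third a combinatorial unpacking of the first two.

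For the identification $\text{Tor}_{\Z[v_1,\ldots,v_m]}(\Z,\Z[\K]) \cong H(\Lambda[u_1,\ldots,u_m] \otimes \Z[\K], d)$, I would exhibit the Koszul complex $(\Lambda[u_1,\ldots,u_m] \otimes R, d)$ with $R := \Z[v_1,\ldots,v_m]$ and $d(u_i) = v_i$ as a free DG-resolution of $\Z$ over $R$. Freeness is immediate, and acyclicity follows from the classical contracting homotopy that ``moves $v_i$'s onto $u_i$'s''. Tensoring this resolution with $\Z[\K]$ over $R$ kills the free $v_i$-action and leaves precisely the complex in line (2), whose cohomology computes $\text{Tor}_R(\Z, \Z[\K])$ by definition. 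The bigraded commutative algebra structure is preserved because the Koszul resolution is a commutative DGA and the tensor product of two such is again one.

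For $H^*(\ZZ_\K) \cong \text{Tor}_R(\Z, \Z[\K])$, I would use the Borel fibration $\ZZ_\K \to DJ(\K) \to BT^m$, where $DJ(\K) := (\mathbb{C}P^\infty, *)^\K$ is the Davis--Januszkiewicz space, with $H^*(BT^m) = R$ and $H^*(DJ(\K)) = \Z[\K]$. The Eilenberg--Moore spectral sequence has $E_2 \cong \text{Tor}_{H^*(BT^m)}(\Z, H^*(DJ(\K)))$ converging to $H^*(\ZZ_\K)$, and collapse at $E_2$ is forced by the internal polynomial bidegree: higher differentials would have to shift both the homological and internal gradings in ways inconsistent with the bigrading inherited from the Koszul resolution. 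A cleaner route, and the one taken in the cited source, is to bypass spectral sequences entirely and give $\ZZ_\K$ a cell decomposition indexed by pairs $(\sigma, J)$ with $\sigma \in \K$ and $\sigma \subseteq J \subseteq [m]$: cells of type $D^2$ contribute a factor $v_i$, cells of type $S^1$ a factor $u_i$, and basepoints nothing, so that the cellular cochain complex of $\ZZ_\K$ is isomorphic, as a DGA, to $\Lambda[u_1,\ldots,u_m] \otimes \Z[\K]$ with the stated differential.

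For Hochster's decomposition (line (3)), I would exploit the natural $\N^m$-multigrading on $\Lambda[u_1,\ldots,u_m] \otimes \Z[\K]$. A monomial of the form $u_I \otimes v^\alpha x^\sigma$ has support $J = I \cup \mathrm{supp}(\alpha) \cup \sigma \subseteq [m]$, and within each fixed multidegree the complex depends only on $J$. After absorbing the $v^\alpha$ factor into the bidegree bookkeeping, one identifies the remaining piece with the reduced simplicial cochain complex $\C^*(\K_J)$ up to a degree shift, the Koszul differential $u_i \mapsto v_i$ now playing the role of the simplicial coboundary. Summing over all $J \subseteq [m]$ yields the direct-sum decomposition, with $\H^*(\K_J)$ sitting in the bidegree prescribed by $|J|$ and the cohomological degree inside $\K_J$.

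The main technical obstacle is the middle step. Either one invokes a general collapse criterion for the Eilenberg--Moore spectral sequence (which needs extra care over $\Z$ in the presence of torsion), or one writes out the explicit cellular model on $\ZZ_\K$ and checks that the cellular coboundary on cells indexed by $(\sigma, J)$ genuinely matches $u_i \mapsto v_i$ on monomials. Nailing down the sign conventions that make this correspondence literal is the most delicate part; everything else is standard homological algebra and multigraded bookkeeping.
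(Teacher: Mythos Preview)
The paper does not prove this theorem; it is quoted verbatim from \cite[\S~4.5]{ToricTop} and used as background. There is therefore no ``paper's own proof'' to compare against.

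That said, your sketch is a faithful outline of the argument given in the cited source. The Koszul resolution identification of $\mathrm{Tor}$ with the cohomology of $\Lambda[u_i]\otimes\Z[\K]$ is standard, and Buchstaber--Panov establish the topological isomorphism precisely via the cellular model you describe (their cellular cochain algebra on $\ZZ_\K$, not the Eilenberg--Moore route). The Hochster decomposition is likewise obtained there by the multigraded splitting you indicate. Your caveats about signs and about the Eilenberg--Moore collapse over $\Z$ are well placed: the cellular approach is exactly what avoids those issues, and is the reason the reference prefers it.
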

The description (3) is referred to as the \textit{Hochster Decomposition} and its bigrading is induced by the isomorphism, more explicitly it's given by 
\[H^n(\ZZ_\K)\cong \bigoplus_{-k+2l=n}H^{-k,2l}(\ZZ_\K)\hspace{3mm}\text{ where }\hspace{3mm} H^{-k,2l}(\ZZ_\K)\cong\bigoplus_{\substack{J\subseteq [m]\\|J|=l}} \H^{l-k-1}(\K_J). \]
There's an analogous description for homology given by
\[H_n(\ZZ_\K)\cong \bigoplus_{-k+2l=n}H_{-k,2l}(\ZZ_\K)\hspace{3mm}\text{ where }\hspace{3mm} H_{-k,2l}(\ZZ_\K)\cong\bigoplus_{\substack{J\subseteq [m]\\|J|=l}} \H_{l-k-1}(\K_J). \]

In \cite{docoho} a second differential was imposed in  $H_*(\ZZ_\K)$, taking the cohomology under this differential yields the \textit{double homology} of $\ZZ_\K$, the differential is defined as follows:
\begin{construction}
    Let $R$ be a commutative ring and  $\K$ a simplicial complex on $[m]$. For each $p\in\Z$, $J\subseteq [m]$ and $x\in[m]\setminus J$ consider the map \[\Phi_{J,x;p}:\H_p(\K_J;R)\to\H_{p}(\K_{J\cup\{x\}};R)\]
    induced by the inclusion $\K_J\xhookrightarrow{}\K_{J\cup\{x\}}$.  \\
    
    We define the bigraded differential $d_{-k,2l}:H_{-k,2l}(\ZZ_\K;R)\to H_{-k-1,2l+2}(\ZZ_\K;R)$  on $H_*(\ZZ_\K;R)$ through the Hochster's decomposition given by
    \[d_{-k,2l}=\sum_{\substack{J\subseteq [m]\\|J|=l}}\sum_{x\in[m]\setminus J}\varepsilon(J,x)\Phi_{J,x;l-k-1}\]
    where $\varepsilon(J,x)=(-1)^{|\{y\in J:y<x\}|}$. The resulting cochain complex is denoted by $CH_*(\ZZ_\K;R)$ and we denote by $HH_*(\ZZ_\K;R)$ its cohomology.
\end{construction}
Notice that the bidegree of the differential is always $(-1,2)$ meaning that we actually constructed countably many independent cochain complexes and $CH_*(\ZZ_\K)$ is the direct sum of them. This can be seen more explicitly through a regrading. For each $j,l\in \Z$, define \[CH_{j}^l(\ZZ_\K;R):=CH_{j-l,2l}(\ZZ_\K;R)\cong \bigoplus_{\substack{J\subseteq [m]\\|J|=l}}\H_{j-1}(\K_J;R).\]
Notice that the differential we constructed before only affects the upper degree $l$, letting us work with each homological degree in the Hochster's decomposition $j-1$ individually in a systematic way. We'll denote
\[HH_{j}^l(\ZZ_\K;R):=H^l(CH_{j}^*(\ZZ_\K;R))=HH_{j-l,2l}(\ZZ_\K;R)\]

The following are three useful results regarding double homology:

\begin{prop}[{\cite[Theorem 6.3]{docoho}}]\label{join}
    Let $\K^1$, $\K^2$ be simplicial complexes on $[m_1]$, $[m_2]$ respectively, then for any field $\F$
    \[HH(\ZZ_{\K^1*\K^2};\F)\cong HH(\ZZ_{\K^1};\F)\otimes HH(\ZZ_{\K^2};\F)\]
\end{prop}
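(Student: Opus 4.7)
My plan is to promote the stated isomorphism to an isomorphism of cochain complexes $CH_*(\ZZ_{\K^1 * \K^2}; \F) \cong CH_*(\ZZ_{\K^1}; \F) \otimes CH_*(\ZZ_{\K^2}; \F)$, and then invoke the algebraic Künneth theorem for cochain complexes over a field.

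The first step identifies the underlying bigraded vector spaces. Any subset $J \subseteq [m_1] \sqcup [m_2]$ decomposes uniquely as $J_1 \sqcup J_2$ with $J_i \subseteq [m_i]$, and the full subcomplex satisfies $(\K^1 * \K^2)_J = \K^1_{J_1} * \K^2_{J_2}$. The classical simplicial join formula, which over a field reads $\H_n(X * Y; \F) \cong \bigoplus_{p + q = n - 1} \H_p(X; \F) \otimes \H_q(Y; \F)$, combined with the regraded Hochster description $CH_j^l(\ZZ_\K; \F) \cong \bigoplus_{|J| = l} \H_{j - 1}(\K_J; \F)$, yields after reindexing (with $|J_i| = l_i$, $p = j_1 - 1$, $q = j_2 - 1$)
\[
CH_j^l(\ZZ_{\K^1 * \K^2}; \F) \cong \bigoplus_{\substack{l_1 + l_2 = l \\ j_1 + j_2 = j}} CH_{j_1}^{l_1}(\ZZ_{\K^1}; \F) \otimes CH_{j_2}^{l_2}(\ZZ_{\K^2}; \F).
\]

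The second step, which is the technical heart of the argument, is to verify that this identification intertwines the double-homology differential with the Koszul tensor-product differential $d_1 \otimes 1 + (-1)^{l_1}\, 1 \otimes d_2$. For $x \in [m_1] \setminus J_1$, the inclusion $(\K^1 * \K^2)_J \hookrightarrow (\K^1 * \K^2)_{J \cup \{x\}}$ is the join of $\K^1_{J_1} \hookrightarrow \K^1_{J_1 \cup \{x\}}$ with the identity on $\K^2_{J_2}$, so by naturality of the join formula the map $\Phi_{J, x; j - 1}$ corresponds to $\Phi^1_{J_1, x; j_1 - 1} \otimes \mathrm{id}$; symmetrically $\mathrm{id} \otimes \Phi^2_{J_2, x; j_2 - 1}$ for $x \in [m_2]$. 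Under the convention $[m_1] < [m_2]$ on the joined vertex set, $\varepsilon(J, x) = \varepsilon_1(J_1, x)$ when $x \in [m_1]$, while for $x \in [m_2]$ the set $\{y \in J : y < x\}$ picks up all of $J_1$, giving $\varepsilon(J, x) = (-1)^{|J_1|} \varepsilon_2(J_2, x) = (-1)^{l_1} \varepsilon_2(J_2, x)$, which is precisely the Koszul sign indexed by the cochain degree of the left factor.

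With the cochain-complex isomorphism established, the algebraic Künneth theorem over the field $\F$ gives $HH(\ZZ_{\K^1 * \K^2}; \F) \cong H\bigl(CH(\ZZ_{\K^1}; \F) \otimes CH(\ZZ_{\K^2}; \F)\bigr) \cong HH(\ZZ_{\K^1}; \F) \otimes HH(\ZZ_{\K^2}; \F)$, completing the proof. The main obstacle I anticipate is the naturality check in the second step: the join isomorphism on reduced homology must be realized by a chain-level comparison $\tilde{C}_*(X)\otimes \tilde{C}_*(Y) \xrightarrow{\simeq} \tilde{C}_{*+1}(X*Y)$ that is functorial in subcomplex inclusions of $X$ and $Y$ separately, so that the $\Phi_{J,x}$ really decompose as claimed. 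Once this naturality is pinned down, the sign calculation reduces to routine Koszul bookkeeping.
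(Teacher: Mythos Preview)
The paper does not actually prove this proposition: it is quoted verbatim as \cite[Theorem~6.3]{docoho} and used as a black box, with no argument given in the present paper. So there is no ``paper's own proof'' to compare your proposal against.

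That said, your outline is the standard and correct argument for this kind of statement, and is essentially what one finds in the cited source. The decomposition $(\K^1*\K^2)_J=\K^1_{J_1}*\K^2_{J_2}$, the chain-level join isomorphism $\tilde C_*(X*Y)\cong\tilde C_*(X)\otimes\tilde C_*(Y)[1]$ (natural in inclusions of each factor), and the sign bookkeeping you describe for $\varepsilon(J,x)$ under the convention $[m_1]<[m_2]$ are exactly the ingredients that make the $CH$ complex of the join into the tensor product of the two $CH$ complexes; K\"unneth over a field then finishes it. Your identification of the one genuine technical point---that the join isomorphism on reduced homology must come from a chain-level map functorial in each variable separately---is apt, and for simplicial complexes this is immediate since the join of ordered simplices gives an explicit bijection on basis elements. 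There is no gap in your plan.
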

 
\begin{thm}[{\cite[Corollary 5.5]{wedge-dec}}]\label{wedge}
    Let $\K$ be a wedge-decomposable simplicial complex on $[m]$, then
    \[HH_{-k,2l}(\ZZ_\K;\Z)\cong \left\{\begin{array}{cl}
        \Z & \text{for }(-k,2l)=(0,0),\;(-1,4) \\
        0 & \text{else.}
    \end{array}\right.\]
    or equivalentely
    \[HH_{j}^l(\ZZ_\K;\Z)\cong \left\{\begin{array}{cl}
        \Z & \text{for }(j,l)=(0,0),\;(1,2) \\
        0 & \text{else.}
    \end{array}\right.\]
\end{thm}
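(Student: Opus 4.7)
The plan is to exploit the wedge decomposition $\K=\K^1\cup\K^2$ together with the fact that the intersection is a single simplex $\langle\sigma\rangle$, setting up Mayer--Vietoris at the level of the Hochster double complex and then pinning down cycles and boundaries.

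For every $J\subseteq[m]$ the restriction $\langle\sigma\rangle_J$ is either empty (when $J\cap V(\sigma)=\emptyset$) or a contractible simplex, so the Mayer--Vietoris short exact sequence of reduced simplicial chain complexes
\begin{equation*}
0\to\widetilde C_*(\langle\sigma\rangle_J)\to\widetilde C_*((\K^1)_J)\oplus\widetilde C_*((\K^2)_J)\to\widetilde C_*(\K_J)\to0
\end{equation*}
is under tight control: on $\widetilde H_*$ it either splits (when $J\cap V(\sigma)\neq\emptyset$) or contributes a single $\Z$ summand to $\widetilde H_0(\K_J)$ coming from the extra connected component (when $J\cap V(\sigma)=\emptyset$). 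This sequence is natural in $J\hookrightarrow J\cup\{x\}$, so summing over $J$ with the signed Hochster inclusion differential yields a short exact sequence of bicomplexes relating $\K$, $\K^1$, $\K^2$, and $\langle\sigma\rangle$.

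Next I would compute $HH_*(\ZZ_{\langle\sigma\rangle})$ directly. Only the row $j=0$ of $CH_*(\ZZ_{\langle\sigma\rangle})$ is nonzero, supported on $\{J\subseteq[m]\setminus V(\sigma)\}$ with $\widetilde H_{-1}(\emptyset)=\Z$. Under the Hochster differential this row is precisely the augmented cochain complex of the full simplex on $[m]\setminus V(\sigma)$, whose cohomology is $\Z$ at $(0,0)$ and zero elsewhere. The two surviving generators of $HH_*(\ZZ_\K)$ can then be written down explicitly: the $(0,0)$ class is the canonical augmentation, and the $(-1,4)$ class is represented by the $\widetilde H_0$ cycle of $\K_{\{x,y\}}$ for any pair $x\in V(\K^1)\setminus V(\sigma)$, $y\in V(\K^2)\setminus V(\sigma)$ --- such a pair is not an edge of $\K$, so $\K_{\{x,y\}}$ consists of two disjoint points. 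One verifies that different choices of $(x,y)$ yield cohomologous classes using the MV decomposition within rows $l=2,3$.

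The main obstacle --- and the heart of the argument --- is ruling out every other cohomology class. A generic cycle in $CH^l_j(\ZZ_\K)$ is an element of $\bigoplus_{|J|=l}\widetilde H_{j-1}(\K_J)$; by the MV splitting above each summand decomposes into a $(\K^1)_J$-piece, a $(\K^2)_J$-piece, and a possible $\widetilde H_0$ correction. Classes of the type $\widetilde H_{j-1}((\K^i)_J)$ with $j\geq 2$, when viewed inside $CH(\ZZ_\K)$, acquire additional terms in their Hochster differential coming from inclusions $J\hookrightarrow J\cup\{y\}$ with $y\in V(\K^{3-i})\setminus V(\sigma)$: each such $y$ simply adjoins an isolated vertex, so $\widetilde H_{j-1}(\K_{J\cup\{y\}})\cong\widetilde H_{j-1}(\K_J)$ and the same class re-appears at higher $l$. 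The interplay between these outside-contributions and the intrinsic $(\K^i)$-differential produces exactly enough relations to force every such class to be a boundary in $CH(\ZZ_\K)$. Setting up this cancellation systematically, and handling the $\widetilde H_0$ corrections from $J$ disjoint from $V(\sigma)$ (which ultimately contribute only the promised $(-1,4)$ generator via the Mayer--Vietoris connecting homomorphism from the $(0,0)$ class of $\langle\sigma\rangle$), is the combinatorial bulk of the proof.
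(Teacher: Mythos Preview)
The paper does not prove this theorem: it is quoted as \cite[Corollary~5.5]{wedge-dec} and used as a black box throughout (in the proofs of Proposition~\ref{3minvert} and Theorem~\ref{nonprim}). There is therefore no ``paper's own proof'' to compare against.

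As for your sketch itself: the strategy is sound and, in the regime $j\geq 2$, can be made into a complete argument. The Mayer--Vietoris splitting $\H_{j-1}(\K_J)\cong\H_{j-1}((\K^1)_{J\cap V(\K^1)})\oplus\H_{j-1}((\K^2)_{J\cap V(\K^2)})$ is natural in $J$, and the $\K^1$-summand, as $J$ varies, is isomorphic as a cochain complex to $CH_j^*(\ZZ_{\K^1})\otimes C^*$, where $C^*$ is the Koszul/augmented-simplex complex on the nonempty set $V(\K^2)\setminus V(\sigma)$; since $C^*$ is acyclic this piece contributes nothing. What you have \emph{not} done is carry out the $j=1$ case, which is genuinely the delicate part: here the MV sequence no longer splits naturally (the connecting map $\H_0(\langle\sigma\rangle_J)\to\H_0((\K^1)_J)\oplus\H_0((\K^2)_J)$ intervenes), and your closing paragraph only gestures at ``$\H_0$ corrections'' without producing the actual computation that isolates the single surviving class in bidegree $(-1,4)$. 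That step requires a real argument --- either a careful filtration or an explicit chain-level calculation --- and your proposal stops short of providing one.
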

The next lemma says that adding a simplex to a complex doesn't change the double homology up to a certain degree. 
\begin{lem}\label{homo}
    Let $\K$ be a simplicial complex on $[m]$ and  $S\subseteq [m]$ Such that $\partial S\subseteq \K$. If we set $\L=\K\cup\{S\}$, then for every $j\leq |S|-2$ 
    \[HH_j^*(\ZZ_\K)\cong HH_j^*(\ZZ_\L)\]
\end{lem}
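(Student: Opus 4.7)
The plan is to work directly at the level of the cochain complex $CH_j^*$ rather than passing to cohomology first, and to observe that passing from $\K$ to $\L=\K\cup\{S\}$ only affects the simplicial chain complexes $C_*(\K_J)$ in the single degree $|S|-1$, so the homology groups entering the definition of $CH_j^*$ are unchanged when $j\le |S|-2$.

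First I would unpack what changes between the full subcomplexes of $\K$ and those of $\L$. For any $J\subseteq[m]$, either $S\not\subseteq J$, in which case $\L_J=\K_J$, or $S\subseteq J$, in which case $\L_J=\K_J\cup\{S\}$ and the simplicial chain complex $C_*(\L_J)$ differs from $C_*(\K_J)$ only by an extra rank-one summand in degree $|S|-1$. Consequently, $C_i(\K_J)=C_i(\L_J)$ for every $i\ne|S|-1$. In particular, when $j\le|S|-2$ we have $j<|S|-1$, so the three relevant degrees $j-2,j-1,j$ of the chain complex are unaffected, yielding isomorphisms $\tilde H_{j-1}(\K_J;R)\xrightarrow{\cong}\tilde H_{j-1}(\L_J;R)$ induced by the inclusion $\K_J\hookrightarrow \L_J$ for every $J\subseteq[m]$.

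Next I would assemble these into a cochain-complex isomorphism. Using the identification
\[CH_j^l(\ZZ_\K;R)\cong\bigoplus_{|J|=l}\tilde H_{j-1}(\K_J;R),\]
the direct sum of the maps above gives a degree-preserving isomorphism $CH_j^l(\ZZ_\K;R)\to CH_j^l(\ZZ_\L;R)$. To see it commutes with the differential, note that for each $J$ and each $x\in[m]\setminus J$ the square
\[\begin{tikzcd}[ampersand replacement=\&]
\K_J \ar[r,hook]\ar[d,hook] \& \K_{J\cup\{x\}}\ar[d,hook]\\
\L_J \ar[r,hook] \& \L_{J\cup\{x\}}
\end{tikzcd}\]
commutes, so by functoriality of $\tilde H_{j-1}$ the maps $\Phi^\K_{J,x;j-1}$ and $\Phi^\L_{J,x;j-1}$ correspond under the vertical isomorphisms. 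Since the same signs $\varepsilon(J,x)$ appear in both differentials, the two cochain complexes $CH_j^*(\ZZ_\K;R)$ and $CH_j^*(\ZZ_\L;R)$ are isomorphic, and taking cohomology yields $HH_j^*(\ZZ_\K)\cong HH_j^*(\ZZ_\L)$.

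The argument is essentially bookkeeping, and the only point that requires care is checking that the inequality $j\le|S|-2$ is sharp enough to leave the chain complex untouched in every degree that contributes to $\tilde H_{j-1}$ (namely $j-2$, $j-1$, and $j$). The hypothesis $\partial S\subseteq \K$ plays no direct role in this computation beyond ensuring that $\L$ is indeed a simplicial complex, which is what makes $\ZZ_\L$ and $CH_j^*(\ZZ_\L)$ defined in the first place; the simplicial-complex structure on $\L_J$ when $S\subseteq J$ is automatic from $\partial S\subseteq\K\subseteq\L$.
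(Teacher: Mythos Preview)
Your proof is correct and follows essentially the same approach as the paper's: the key observation in both is that $\K$ and $\L$ have the same faces of cardinality at most $|S|-1$, so for $j\le|S|-2$ the reduced homology groups $\tilde H_{j-1}(\K_J)$ and $\tilde H_{j-1}(\L_J)$ agree for every $J$, and the inclusion induces the isomorphism. The paper phrases this as $sk_j(\K)=sk_j(\L)$ and uses the diagram of skeleton inclusions, while you spell out the chain-level bookkeeping and the naturality square for the $CH$ differential; these are the same argument at different levels of detail.
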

\begin{proof}
    The proof follows from the fact that $sk_{|S|-1}(\K)=sk_{|S|-1}(\L)$ and so their Hochster decompositions coincide for $j\leq |S|-2$. The isomorphism is induced by the inclusions 
\[\begin{tikzcd}[sep=small]
	& {sk_j(\K)} \\
	\K && \L
	\arrow[hook,from=1-2, to=2-1]
	\arrow[hook,from=1-2, to=2-3]
\end{tikzcd}\]
\end{proof}
%%%%%%%%%%%%%%%%%%%%%%%%%%%%%%%%%%%%%%%%%%%%%%%%%%%%%%%%%%%%%%%%%%%%%%%%%%%%%%%%%%%%%%%%%%%%%%%%%

%%%%%%%%%%%%%%%%%%%%%%%%%%%%%%%%%%%%%%%%%%%%%%%%%%%%%%%%%%%%%%%%%%%%%%%%%%%%%%%%%%%%%%%%%%%%%%
\section{Sphere triangulations}
\begin{defin}
    Let $n,m\geq 0$. We say a simplicial complex $\K$ on $[m]$ is an $n$\textit{-sphere triangulation} if $g(\K)$ is homeomorphic to $S^n$. 
\end{defin} 
\noindent Some important properties of $n$-sphere triangulations include:
\begin{itemize}
    \item $\K$ is path-connected.
    \item Every maximal face $\sigma\in \K$ is of cardinality $|\sigma|=n+1$.
    \item For every maximal face $\sigma\in\K$ and facet $\tau$ of $\sigma$, there's a unique vertex $u_{\sigma;\tau}\notin\sigma$ such that $\tau\cup\{u_{\sigma;\tau}\}\in \K$.
    \item For every vertex $x\in [m]$, $lk(x)$ is an $(n-1)$-sphere triangulation. 
    
\end{itemize}

\begin{prop}
    Let $\K$ be an $n$-sphere triangulation and $\L$ an $n'$-sphere triangulation, then $\K*\L$ is a sphere triangulation of dimension $n+n'+1$.
\end{prop}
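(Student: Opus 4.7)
The plan is to reduce this proposition directly to the statement that the topological join of an $n$-sphere and an $n'$-sphere is an $(n+n'+1)$-sphere, leveraging the compatibility between the simplicial join and the geometric realization functor already quoted in Section 2.

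First I would observe that by the discussion in Section 2, the geometric realization satisfies $g(\K * \L) \cong g(\K) * g(\L)$, where the join on the right is the topological join of spaces. Since by hypothesis $g(\K) \cong S^n$ and $g(\L) \cong S^{n'}$, the problem collapses to producing a homeomorphism $S^n * S^{n'} \cong S^{n+n'+1}$. One could simply cite this as a classical fact (e.g.\ from Spanier \cite[\S3.1]{Spanier}), but to keep the proof self-contained I would include the explicit homeomorphism: parametrize $S^{n+n'+1} \subset \R^{n+1} \times \R^{n'+1}$ by
\[
(x, y, t) \longmapsto (\cos(\tfrac{\pi}{2} t)\, x,\ \sin(\tfrac{\pi}{2} t)\, y),\qquad x \in S^n,\ y \in S^{n'},\ t \in [0,1],
\]
and verify that this descends to a well-defined continuous bijection from the quotient model $S^n * S^{n'} = (S^n \times [0,1] \times S^{n'}) / {\sim}$ onto $S^{n+n'+1}$, where the equivalence relation collapses $S^n \times \{0\} \times S^{n'}$ onto $S^n$ and $S^n \times \{1\} \times S^{n'}$ onto $S^{n'}$. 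Compactness of the domain and Hausdorffness of the codomain then upgrade this to a homeomorphism.

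For the dimension count, every maximal face of $\K * \L$ has the form $\sigma \sqcup \tau$ with $\sigma$ maximal in $\K$ and $\tau$ maximal in $\L$, so $|\sigma \sqcup \tau| = (n+1) + (n'+1) = (n+n'+1) + 1$, confirming that $\K * \L$ is a triangulation of dimension $n+n'+1$ and matching the topological computation above.

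I do not anticipate a serious obstacle here: the result is essentially a two-step unpacking of definitions (simplicial join $\leadsto$ topological join $\leadsto$ sphere). The only point that requires any care is the explicit identification $S^n * S^{n'} \cong S^{n+n'+1}$, and even there the parametrization above is standard. If one wished to avoid explicit coordinates, one could alternatively argue via $D^{n+1} * D^{n'+1} \cong D^{n+n'+2}$ together with the observation that the join of boundaries embeds as the boundary of the join of disks.
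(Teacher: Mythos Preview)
Your proof is correct and complete. The paper actually states this proposition without proof (it is a classical fact), so there is nothing to compare against; your argument via $g(\K*\L)\cong g(\K)*g(\L)$ together with the explicit homeomorphism $S^n*S^{n'}\cong S^{n+n'+1}$ is exactly the standard justification one would supply, and the dimension count from maximal faces is a nice redundancy check.
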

Apart from the theoretical importance of sphere triangulations, the interest in studying their double homology rises from the fact that they are well behaved under it. This is because sphere triangulations are $\text{Gorenstein}^*$ complexes (see \cite[\S3.4]{ToricTop}), and their moment-angle complexes are the only ones that follow bigraded Poincaré Duality (see Proposition 4.6.6 in \cite{ToricTop}).
%%%%%%%%%%%%%%%%%%%%%%%%%%%%%%%%%%%%%%%%%%%%%%%%%%%%%%%%%%%%%
\begin{thm}[{\cite[Prop. 8.4]{docoho}} Duality]\label{duality}
    Let $\F$ be a field and $\K$ a simplicial complex on $[m]$. If $\K$ is Gorenstein${}^*$  of dimension $n$, then
\[HH_{-k,2l}(\ZZ_\K;\F)\cong HH_{n+k+1-m,2m-2l}(\ZZ_\K;\F)\]
or equivalently
\[HH_j^l(\ZZ_\K;\F)\cong HH_{n+1-j}^{m-l}(\ZZ_\K;\F)\]
\end{thm}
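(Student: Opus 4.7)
My plan is to derive the duality from a combinatorial Alexander duality for Gorenstein${}^*$ complexes, lifted from a vector-space isomorphism on Hochster summands to an isomorphism of the cochain complexes defining $HH_*$. The key input is the following fact, classical for sphere triangulations via Alexander duality in $S^n$ and true for general Gorenstein${}^*$ complexes via the Stanley--Reisner description of the canonical module: for any $J\subseteq[m]$ there is a natural isomorphism
\[\H_i(\K_J;\F)\;\cong\;\H_{n-i-1}(\K_{[m]\setminus J};\F)^\vee,\]
where $(-)^\vee=\Hom_\F(-,\F)$. Naturality is the crucial additional feature I will need: for each $x\notin J$, the inclusion-induced map $\H_i(\K_J)\to\H_i(\K_{J\cup\{x\}})$ becomes, under this identification, the transpose of the inclusion-induced map $\H_{n-i-1}(\K_{[m]\setminus(J\cup\{x\})})\to\H_{n-i-1}(\K_{[m]\setminus J})$.

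Summing the Alexander duality isomorphism with $i=j-1$ over all $|J|=l$ and using the bijection $J\leftrightarrow[m]\setminus J$ produces a vector-space isomorphism
\[\bigl(CH_j^l(\ZZ_\K;\F)\bigr)^\vee \;\cong\; \bigoplus_{|J|=l}\H_{n-j}(\K_{[m]\setminus J};\F) \;=\; CH_{n+1-j}^{m-l}(\ZZ_\K;\F).\]
I would then verify that this identification intertwines the transpose of the double-homology differential on $CH_j^*$ with the double-homology differential on $CH_{n+1-j}^*$. By Construction~3.2 each differential is a signed sum of the inclusion-induced maps above, and the naturality in the previous paragraph matches the underlying linear maps; what remains is to reconcile the signs $\varepsilon(J,x)$ that appear for each indexing subset with the signs $\varepsilon([m]\setminus(J\cup\{x\}),x)$ that appear on the dual side. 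Since their product $\varepsilon(J,x)\cdot\varepsilon([m]\setminus(J\cup\{x\}),x)=(-1)^{|\{y\in[m]:y<x\}|}$ depends only on $x$, a diagonal rescaling of each Hochster summand by an $x$-dependent sign should absorb the discrepancy, giving a genuine isomorphism of cochain complexes.

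With $(CH_j^*)^\vee\cong CH_{n+1-j}^{m-*}$ as cochain complexes, passing to cohomology yields $(HH_j^l)^\vee\cong HH_{n+1-j}^{m-l}$, and over a field these finite-dimensional vector spaces are isomorphic, which is the statement of the theorem. The equivalent bigraded form $HH_{-k,2l}\cong HH_{n+k+1-m,2m-2l}$ follows from the regrading $j=l-k$. I expect the sign-matching step to be the main obstacle: both the Alexander duality isomorphism (which involves orientation choices on the fundamental class of the Gorenstein${}^*$ complex) and the double-homology differential carry combinatorial signs, and confirming that their interaction factors as an $x$-dependent (hence $J$-independent) sign, rather than an irreducible obstruction, is the delicate part. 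A secondary obstacle is invoking Alexander duality with the required naturality outside the sphere-triangulation case; for general Gorenstein${}^*$ complexes I would extract it from the duality isomorphism of the Stanley--Reisner ring against its canonical module, which is already what underlies Poincaré duality for $\ZZ_\K$.
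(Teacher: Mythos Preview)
The paper does not prove this theorem: it is quoted verbatim from \cite[Prop.~8.4]{docoho} and used as a black box throughout, so there is no ``paper's own proof'' to compare against. Your outline is essentially the standard argument behind that cited result---combinatorial Alexander duality on full subcomplexes of a Gorenstein${}^*$ complex, assembled into a duality of the $CH$ cochain complexes---and it is correct in structure.

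Two remarks on the details you flagged. First, your sign computation is right: $\varepsilon(J,x)\cdot\varepsilon([m]\setminus(J\cup\{x\}),x)=(-1)^{x-1}$ depends only on $x$, and the rescaling that absorbs it is $c_J=(-1)^{\sum_{y\in J}(y-1)}$ on the $J$-summand, which satisfies $c_{J\cup\{x\}}/c_J=(-1)^{x-1}$; so this is not an obstruction. Second, the naturality of Alexander duality you need (that the inclusion $\K_J\hookrightarrow\K_{J\cup\{x\}}$ dualizes to the inclusion $\K_{[m]\setminus(J\cup\{x\})}\hookrightarrow\K_{[m]\setminus J}$) is exactly what is established in \cite{docoho} en route to their Proposition~8.4, via the cap product with the fundamental class; for general Gorenstein${}^*$ complexes this is where the Stanley--Reisner/local cohomology input enters, as you note.
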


\subsection{Double homology of rank 4}
The first attempt to study the double homology of sphere triangulations was done in the original paper where $HH$ was introduced. 
\begin{thm}[{\cite[Theorem 7.2]{docoho}}]\label{cycles}
    Let $m\geq 4$ and $\K$ be a 1-sphere triangulation on $[m]$, then 
    \[HH_{-k,2l}(\ZZ_\K;\Z)\cong\left\{\begin{array}{cl}
        \Z & \text{for }(-k,2l)=(0,0),\;(-1,4),\;(3-m,2m-4),\;(2-m,2m) \\
         0 & \text{else.} 
    \end{array}\right.\]
\end{thm}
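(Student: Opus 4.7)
The plan is to compute $HH(\ZZ_{C_m})$ directly via the Hochster decomposition and then induct on $m$. A $1$-sphere triangulation on $[m]$ is (up to relabeling) the cycle graph $C_m$, and for any $J \subseteq [m]$ the induced subcomplex $(C_m)_J$ is either a disjoint union of arcs (when $J \subsetneq [m]$) or $C_m$ itself (when $J = [m]$). Hence $\tilde{H}_i((C_m)_J)$ vanishes unless $i \in \{-1, 0, 1\}$. Through the regrading $CH_j^l$, the double chain complex decouples into three independent pieces indexed by $j = 0, 1, 2$: the $j = 0$ and $j = 2$ pieces live only at $J = \emptyset$ and $J = [m]$ respectively and contribute $\Z$ at $(0, 0)$ and $(2 - m, 2m)$ with no possible differentials. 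By Poincar\'e duality (Theorem 4.3 with $n = 1$), $HH_1^l \cong HH_1^{m - l}$, so the remaining task is to show $HH_1^2 \cong \Z$ and $HH_1^l = 0$ for $3 \leq l \leq m - 3$.

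I would induct on $m$. The base case $m = 4$ uses $C_4 \cong \partial\Delta^1 * \partial\Delta^1$ together with the join formula (Proposition 4.1); a direct reading of the Hochster decomposition gives $HH(\ZZ_{\partial\Delta^1}) \cong \Z$ at $(0, 0)$ and $(-1, 4)$ (there is no room for any nontrivial differential), so the tensor product yields the claimed structure. For $m \geq 5$, adjoin the chord $e = \{1, 3\}$ to form $\L := C_m \cup \{e\}$, which is wedge-decomposable as $\partial\Delta^2$ on $\{1, 2, 3\}$ glued along $\{1, 3\}$ to $C_{m-1}$ on $\{1, 3, 4, \ldots, m\}$; by Theorem 4.2, $HH(\ZZ_\L)$ is concentrated at $(0, 0)$ and $(-1, 4)$. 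The inclusion $C_m \hookrightarrow \L$ induces a short exact sequence of cochain complexes
\[
0 \to K^* \to CH_1^*(\ZZ_{C_m}) \to CH_1^*(\ZZ_\L) \to 0,
\]
whose kernel $K^l$ is spanned by the classes $[A_3^J] - [A_1^J]$ (the arcs of $(C_m)_J$ through $1$ and $3$) for $J \supseteq \{1, 3\}$ with $2 \notin J$ and $J \neq [m] \setminus \{2\}$.

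Reparametrizing by $J' := J \setminus \{1, 3\} \subsetneq \{4, \ldots, m\}$ identifies $K^*$ (up to signs) with the shifted reduced simplicial cochain complex $\tilde{C}^{*-3}(\partial \Delta^{m-4})$ of the boundary of the simplex on $\{4, \ldots, m\}$: the ``missing'' top cell of $\partial\Delta^{m-4}$ corresponds exactly to the ``bridging'' differential contributions in $K^*$ that vanish, where an added vertex $y$ would merge the arcs $A_1^J$ and $A_3^J$ (which happens only in the top degree, when $J \cup \{y\} = [m] \setminus \{2\}$). Since $\partial\Delta^{m-4} \cong S^{m-5}$, one has $H^l(K^*) \cong \tilde{H}^{l-3}(S^{m-5})$, which is $\Z$ at $l = m - 2$ and $0$ otherwise. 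Threading $H^*(K^*)$ and $HH_1^*(\ZZ_\L)$ through the associated long exact sequence then yields $HH_1^l(\ZZ_{C_m}) = \Z$ exactly at $l = 2$ (inherited from $\L$) and $l = m - 2$ (inherited from $K^*$), and zero elsewhere.

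The main obstacle is the explicit identification of $K^*$ with the reduced cochain complex of $\partial\Delta^{m-4}$: one must verify that the bridging terms vanish exactly at the top degree, and track the signs $\varepsilon(J, y)$ of the Hochster differential to confirm that, after reparametrization, they assemble into the standard simplicial coboundary. A subsidiary subtlety arises in the smallest inductive case $m = 5$, where the target degrees $l = 2$ and $l = m - 2 = 3$ are adjacent in the long exact sequence; there one must verify directly on an explicit cocycle that the connecting homomorphism $\delta \colon HH_1^2(\ZZ_\L) \to H^3(K^*)$ vanishes, which reduces to a signed cancellation of chain-level contributions in low dimension.
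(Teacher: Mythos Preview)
Your approach is essentially the one the paper describes (and carries out in detail in the generalization, Proposition~\ref{3minvert}): adjoin an edge to make the cycle wedge-decomposable, then analyze the short exact sequence on $CH_1^*$ and compute the cohomology of the kernel. The only cosmetic difference is that you identify the kernel directly with the shifted reduced cochain complex of $\partial\Delta^{m-4}$, whereas the paper computes the analogous kernel via a two-row filtration spectral sequence; these are the same calculation, and your flagged subtleties (sign-matching and the $m=5$ connecting homomorphism) are genuine but routine.
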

In \cite{docoho}, they split an $m-cycle$ in two by adding an extra $1$-simplex. Then use the wedge-decomposability of the resulting complex and its compatibility with the m-cycle to obtain the double homology. This can be easily done for 1-spheres as every vertes has degree 2, which doesn't happen for $n-spheres$
\begin{prop}
    Let $n\geq 1$. If $\K$ is an $n$-sphere triangulation on $[m]$, then $\delta(\K)\geq n+1$ 
\end{prop}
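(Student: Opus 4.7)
The plan is to reduce the statement to a purely combinatorial minimum on the vertex count of sphere triangulations, using the listed fact that for every vertex $x \in V(\K)$ the link $lk(x)$ is itself an $(n-1)$-sphere triangulation. Since $deg(x) = |V(lk(x))|$, the inequality $\delta(\K) \geq n+1$ will follow once I establish the following auxiliary claim: every $k$-sphere triangulation $\L$ on a vertex set $W$ satisfies $|W| \geq k+2$. Applied with $k = n-1$ and $\L = lk(x)$, this gives $deg(x) \geq n+1$ for each vertex, hence the conclusion.

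To prove the auxiliary claim I would argue by contradiction. Since $g(\L) \cong S^k$ is a $k$-dimensional topological space, $\L$ must contain at least one face of cardinality $k+1$, call it $\sigma$. Now suppose $|W| \leq k+1$. Combined with $|\sigma| = k+1$ and $\sigma \subseteq W$, this forces $W = \sigma$, so $\L$ is a subcomplex of $\mathcal{P}(\sigma)$. But $\sigma \in \L$ implies, by the closure-under-subsets axiom of a simplicial complex, that every $\tau \subseteq \sigma$ lies in $\L$, i.e.\ $\L \supseteq \mathcal{P}(\sigma) \setminus \{\emptyset\}$. Since there are no further subsets of $W$, we conclude $\L = \mathcal{P}(\sigma)\setminus\{\emptyset\}$, whose geometric realization is the $k$-simplex $\Delta^k$. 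This is contractible, contradicting $g(\L) \simeq S^k$ (for instance, they have different reduced homology in degree $k$).

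There is really no hard step here; the only thing to be careful about is the small-$n$ bookkeeping. When $n = 1$, the link of each vertex is a $0$-sphere triangulation, i.e.\ two disjoint points, so $deg(x) = 2 = n+1$ and the bound is tight, consistent with the remark preceding the proposition that motivated it. The general tightness is witnessed by $\partial\Delta^{n+1}$, where every vertex has degree exactly $n+1$, so the inequality cannot be improved without further hypotheses. The main (very mild) obstacle is simply verifying that the dimensional argument "$\L$ contains a $k$-face because $g(\L)$ is $k$-dimensional" is applied correctly — once $\L$ collapses to a subcomplex of a single simplex, contractibility closes the argument.
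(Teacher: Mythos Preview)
Your argument is correct. You invoke the fourth listed property of sphere triangulations (that $lk(x)$ is an $(n-1)$-sphere) and reduce to a vertex-count lemma, which you then prove by a contractibility contradiction. The paper instead uses the third listed property (that every facet of a maximal simplex lies in exactly one other maximal simplex): given $u$ in a maximal face $\sigma$, the $n$ vertices of $\sigma\setminus\{u\}$ are already neighbors of $u$, and the facet property produces one additional neighbor $v\notin\sigma$, giving $deg(u)\geq n+1$ directly. Your route is more conceptual and yields the reusable lemma that any $k$-sphere triangulation has at least $k+2$ vertices; the paper's route is more elementary in that it never appeals to homology or contractibility, only to the pseudomanifold combinatorics. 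One small remark: your sentence ``since $g(\L)\cong S^k$ is $k$-dimensional, $\L$ must contain a face of cardinality $k+1$'' is cleanest justified via $\tilde H_k(\L)\neq 0$ (or by citing the paper's stated fact that maximal faces of a $k$-sphere triangulation have cardinality $k+1$), rather than by invoking topological dimension.
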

\begin{proof}
     Let $u\in [m]$ and let $\sigma$ be some maximal face of $\K$ such that $u\in\sigma$. From the definition, if we take any facet $\tau$ of $\sigma$ that contains $u$, there's a unique vertex $v\notin\sigma$ such that $\tau\cup\{v\}\in \K$, meaning that $\{u,v\}\in \K$. As each vertex of $\sigma\setminus\{u\}$ is adjacent to $u$, we have that 
    \[deg(u)\geq |(\sigma\setminus\{u\})\cup\{v\}|=|\sigma|=n+1.\]
    As $u$ was an arbitrary vertex, the result follows.
\end{proof}

\begin{lem}{\label{lemdegn}}
    Let $\K$ be a triangulated $n$-sphere and let $\L\subset \K$ be a proper subcomplex, then $\H_n(\L)=0$
\end{lem}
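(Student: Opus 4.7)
The approach hinges on two observations about a triangulated $n$-sphere: it is pure $n$-dimensional, and its top homology class is represented by a chain involving every maximal $n$-simplex with a unit coefficient. My plan is first to verify that a proper subcomplex must omit some top-dimensional face, then to compare an arbitrary $n$-cycle in $\L$ to the fundamental cycle of $\K$.

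Recall from the list of properties after Definition 4.1 that every maximal face of $\K$ has cardinality $n+1$. Thus $\K$ has no simplices of dimension exceeding $n$ and is pure $n$-dimensional, in the sense that every simplex of $\K$ is a face of some maximal $n$-simplex. Consequently, if $\L$ contained every maximal $n$-simplex of $\K$, then closure under taking subsets would force $\L = \K$, contradicting properness. Pick a maximal simplex $\sigma_0 \in \K \setminus \L$.

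Because $\K$ has no $(n+1)$-chains, $Z_n(\K) = H_n(\K)$. Since $g(\K) \cong S^n$, for $n\geq 1$ this group is free of rank one, generated by the fundamental cycle
\[ [\K] \;=\; \sum_{\sigma} \varepsilon_\sigma\, \sigma, \]
where the sum runs over all maximal simplices of $\K$ and each $\varepsilon_\sigma \in \{\pm 1\}$ is determined by a consistent orientation (the standard orientable-pseudomanifold description). Now any $n$-cycle $c\in Z_n(\L)$ is automatically in $Z_n(\K)$ via the inclusion $C_n(\L)\hookrightarrow C_n(\K)$, so it must be of the form $c = k\cdot[\K]$. Reading off the $\sigma_0$-coefficient gives $0 = \pm k$, hence $k=0$ and $c=0$. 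Since $C_{n+1}(\L) = 0$ as well, $\H_n(\L) = Z_n(\L) = 0$.

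The one technical point worth being careful about is the explicit description of $[\K]$ as a signed sum touching every maximal simplex, which I would cite from a standard reference on orientable pseudomanifolds. The edge case $n = 0$ does not fit the fundamental-cycle argument cleanly but is trivial on its own: a proper subcomplex of a two-vertex complex is either empty or a single vertex, and $\H_0$ vanishes in both cases.
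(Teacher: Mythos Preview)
Your argument is correct but proceeds differently from the paper. Both proofs begin identically: by purity of an $n$-sphere triangulation, a proper subcomplex $\L$ must omit some maximal $n$-simplex $\sigma_0$. From there, the paper embeds $\L$ into the disk $\K\setminus\{\sigma_0\}$ and reads off $\H_n(\L)=0$ from the long exact sequence of the pair $(\K\setminus\{\sigma_0\},\L)$, using that an $n$-dimensional complex has no relative $(n+1)$-homology and that a disk has trivial $\H_n$. You instead exploit that $C_{n+1}(\K)=0$ forces $Z_n(\K)=H_n(\K)\cong\Z$, with the generator being the fundamental cycle supported on \emph{every} top simplex; any $n$-cycle of $\L$ is then a multiple of this cycle, but its $\sigma_0$-coefficient vanishes, so the cycle is zero.

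Your route is more elementary and self-contained once one accepts the pseudomanifold description of the fundamental class, whereas the paper's route is more homological and sidesteps any explicit cycle description by using only that $\K\setminus\{\sigma_0\}$ is contractible. Your separate treatment of $n=0$ is a nice touch; the paper's argument also needs a word there (or the convention that removing the top cell of $S^0$ leaves a point), though in practice the lemma is only invoked for $n\geq 1$.
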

\begin{proof}
    As a simplicial complex is defined by its maximal simplices, there must be a maximal simplex $\sigma\in \K$ such that $\sigma\notin\L$. We have the following short exact sequence of simplicial chain complexes
\[\begin{tikzcd}
	0 & {\tilde{C}^{simp}(\L)} & {\tilde{C}^{\text{simp}}(\K\setminus\{\sigma\})} & {\tilde{C}^{\text{simp}}(\K\setminus\{\sigma\},\L)} & 0
	\arrow[from=1-1, to=1-2]
	\arrow[from=1-2, to=1-3]
	\arrow[from=1-3, to=1-4]
	\arrow[from=1-4, to=1-5]
\end{tikzcd}\]
and it induces a long exact sequence in reduced homology
\[\begin{tikzcd}
	\cdots & {\H_{n+1}(\K\setminus\{\sigma\},\L)} & {\H_n(\L)} & {\H_n(\K\setminus\{\sigma\})} & \cdots
	\arrow[from=1-1, to=1-2]
	\arrow[from=1-2, to=1-3]
	\arrow[from=1-3, to=1-4]
	\arrow[from=1-4, to=1-5]
\end{tikzcd}\]
However, notice that $\K$ doesn't have $(n+1)$-simplices, so $\H_{n+1}(\K\setminus\{\sigma\},\L)=0$. Also notice that $\K\setminus\{\sigma\}$ is a triangulated $n$-disk, therefore $\H_n(\K\setminus\{\sigma\})=0$, completing the proof.
\end{proof}

Now we obtain a generalization of Theorem \ref{cycles} by restricting to the simplest case possible.

\begin{prop}\label{3minvert}
    Let $\K$ be an $n-sphere$ triangulation on $[m]$ with $m> n+2$. If $\delta(\K)=n+1$ then
    \[HH_{-k,2l}(\ZZ_\K;\Z)\cong \left\{\begin{array}{cl}
        \Z & \text{for }(-k,2l)=(0,0),\;(-1,4),\;(n-m+2,2m-4)),\;(n-m+1,2m) \\
        0  & \text{else.} 
    \end{array}\right.\]
    or equivalently 
    \[HH_{j}^l(\ZZ_\K;\Z)\cong \left\{\begin{array}{cl}
        \Z & \text{for }(j,l)=(0,0),\;(1,2),\;(n,m-2),\;(n+1,m) \\
        0  & \text{else.} 
    \end{array}\right.\]
\end{prop}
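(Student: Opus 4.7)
The plan is to leverage the hypothesis $\delta(\K)=n+1$ to exhibit a wedge-decomposable ``repair'' $\L$ of $\K$, and then transfer its double homology back to $\K$ via Lemma~\ref{homo} and Poincar\'e duality.

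First, I would fix a vertex $v$ realising the minimum degree $n+1$. Since $lk(v)$ is an $(n-1)$-sphere triangulation on exactly $n+1$ vertices, it must equal $\partial\Delta^n$ on some set $\sigma_0\subseteq[m]$ with $|\sigma_0|=n+1$. A short argument shows $\sigma_0\notin\K$: if it were, then $\{v\}*\partial\Delta^n\cup\{\sigma_0\}=\partial\Delta^{n+1}$ on $\{v\}\cup\sigma_0$ would be a subcomplex of $\K$, and since $m>n+2$ it would be a \emph{proper} subcomplex, contradicting Lemma~\ref{lemdegn} (as $\H_n(\partial\Delta^{n+1})\cong\Z$).

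Next I would set $\L:=\K\cup\{\sigma_0\}$ (a well-defined simplicial complex since $\partial\sigma_0=lk(v)\subseteq\K$) and exhibit the wedge decomposition $\L=\K^1\sqcup_{\sigma_0}\K^2$, with $\K^1$ equal to the closed star of $v$ together with $\sigma_0$, which is precisely $\partial\Delta^{n+1}$ on $\{v\}\cup\sigma_0$, and $\K^2:=(\K\setminus\text{star}(v))\cup\{\sigma_0\}$. A direct check gives $\K^1\cap\K^2=\langle\sigma_0\rangle$ and $\K^1\cup\K^2=\L$, with $\sigma_0$ a proper face of each piece. This geometric step is the heart of the proof.

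Combining the ingredients: Theorem~\ref{wedge} gives $HH_j^l(\ZZ_\L;\Z)\cong\Z$ at $(j,l)=(0,0),(1,2)$ and $0$ elsewhere; Lemma~\ref{homo}, applied to the inclusion $\K\hookrightarrow\L$ with $|\sigma_0|=n+1$, transports this to $\K$ in the range $j\leq n-1$; and Theorem~\ref{duality} (applied over each field and then lifted by universal coefficients, since the resulting groups are free) then pins down $HH_j^l(\ZZ_\K)$ for $j\geq 2$ by dualising from the range $j\leq n-1$. For $n\geq 2$ the ranges $\{0,\dots,n-1\}$ and $\{2,\dots,n+1\}$ together cover all relevant bidegrees and yield the claimed description. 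The remaining case $n=1$ is already Theorem~\ref{cycles}.

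The main obstacle I foresee is the wedge decomposition itself, which demands careful bookkeeping of faces around $v$ to match $\sigma_0$ correctly on both sides. A secondary subtlety is that Lemma~\ref{homo} combined with duality leaves $j=1$ uncovered when $n=1$, so Theorem~\ref{cycles} must be invoked in that boundary case; for $n\geq 2$ the two ranges always meet or overlap, so no middle bidegree is left undetermined.
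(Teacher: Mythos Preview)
Your setup---choosing a vertex $v$ of degree $n+1$, observing $lk(v)=\partial\sigma_0$, showing $\sigma_0\notin\K$, and exhibiting $\L=\K\cup\{\sigma_0\}$ as wedge-decomposable---matches the paper exactly, as does invoking Lemma~\ref{homo} and Theorem~\ref{wedge} to settle the range $j\leq n-1$ over~$\Z$.

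Where you diverge is at $j=n$ and $j=n+1$. The paper does \emph{not} use duality there: it handles $j=n+1$ directly via Lemma~\ref{lemdegn}, and for $j=n$ it runs a Mayer--Vietoris argument on $\K_I=\L^{}_I\cup\gen{\sigma_0}_I$, identifies $\ker f_n$ explicitly, and computes its cohomology with a short filtration spectral sequence. This is the bulk of the paper's proof and is carried out entirely over~$\Z$.

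Your alternative---dualise via Theorem~\ref{duality} and lift---is precisely the strategy the paper uses for Theorem~\ref{nonprim}, but note that result is stated only over a field~$\F$. The reason is the gap in your ``lifted by universal coefficients, since the resulting groups are free'' step: Theorem~\ref{duality} is stated for fields, and to pass back to~$\Z$ you would need a universal coefficient theorem for $HH$. That in turn requires $CH_n^l(\ZZ_\K;\F)\cong CH_n^l(\ZZ_\K;\Z)\otimes\F$, i.e.\ that the groups $\H_{n-1}(\K_J;\Z)$ are free and the groups $\H_{n-2}(\K_J;\Z)$ are torsion-free for all $J$. This is not automatic, and your justification ``since the resulting groups are free'' is circular as written. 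The statement can in fact be rescued (combinatorial Alexander duality for the Gorenstein${}^*$ complex $\K$ identifies $\H_{n-1}(\K_J;\Z)\cong\H^{0}(\K_{[m]\setminus J};\Z)$ and $\H_{n-2}(\K_J;\Z)\cong\H^{1}(\K_{[m]\setminus J};\Z)$, both of which are free), but that argument is absent from your proposal and is the real content you are skipping. Without it, your proof establishes the proposition only over fields, not over~$\Z$.
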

\begin{proof}
    Throughout this proof all homology and double homology will be in integer coefficients, it should be clear we are working in integer coefficients.The case $n=1$ was shown in Theorem \ref{cycles}, so we can assume $n\geq 2$. Assume without loss of generality that $deg(1)=n+1$ and $lk(1)=\partial \sigma$ where $\sigma=[2,n+2]$. Since $m>n+2$, clearly $\sigma\notin\K$, as otherwise it wouldn't be a sphere. Let $\L=\K\cup\{\sigma\}$, notice that
    \[\L=(\K_{[2,m]}\cup\{\sigma\})\sqcup_\sigma (\K_{\sigma\cup \{1\}}\cup\{\sigma\})\]
    and so $\L$ is wedge-decomposable. As $V(\L)=V(\K)$ and $\K\subset \L$, the inclusion $\K\xhookrightarrow{}\L$ induces cochain maps $f_j:CH_j^*(\ZZ_\K)\to CH_j^*(\ZZ_\L)$. The proof will follow from studying $f_j$.\\

    \noindent For $j>n+1$, $CH_j^*(\ZZ_\K)=0$, so we have three cases for $j$
    
    \begin{itemize}
        \item ($j<n$) From Lemma \ref{homo} $HH_{j}^l(\ZZ_\K)\cong HH_{j}^l(\ZZ_\L)$. As $\L$ is wedge-decomposable, from Theorem \ref{wedge} we obtain that
        \[HH_{j}^l(\ZZ_\K)\cong \left\{\begin{array}{cc}
             \Z & \text{ for }(j,l)=(0,0),(1,2) \\
             0  & \text{ else.} 
        \end{array}\right.\]
        \item ($j=n+1$) From Lemma \ref{lemdegn} The only full subcomplex of $\K$ with non-trivial $\H_n$ is $\K$ itself, therefore
            \[HH_{n+1}^l(\ZZ_\K)\cong\left\{\begin{array}{cl}
              \Z & \text{ for }l=m\\
            0  & \text{ else.}
        \end{array}\right.\]
        \item ($j=n$) Let $I\subseteq [m]$, we have that $\L_I=\K_I\cup\gen{\sigma}_I$, and further, \[\K_I\cap\gen{\sigma}_I=(\K\cap\gen{\sigma})_I=\partial{\sigma}_I\cong S^{n-1}.\] 
        Using the Mayer-Vietoris long exact sequence (see Proposition 2.3) we get that 
        \[\begin{tikzcd}
            {\H_{n-1}(\K_I\cap\gen{\sigma}_I)} & {\H_{n-1}(\K_I)\oplus\H_{n-1}(\gen{\sigma}_I)} & {\H_{n-1}(\L_I)} & {\H_{n-2}(\K_I\cap\gen{\sigma}_I)}
	\arrow["{\varphi^*}", from=1-1, to=1-2]
	\arrow["{\psi^*}", from=1-2, to=1-3]
	\arrow[from=1-3, to=1-4]
        \end{tikzcd}\]
        The complex $\gen{\sigma}_I$ is contractible, so $\H_*(\gen{\sigma}_I)=0$, in this view, we can consider $\psi^*$ as a restriction of $f_n$. Finally, we have that 
        \[K_I\cap \gen{\sigma}_I=(\K\cap\gen{\sigma})_I=(\partial\sigma)_I\]
        and the geometric realization of $(\partial\sigma)$ is $S^{n-1}$, meaning that we get the exact sequence

        \[\begin{tikzcd}
	{\H_{n-1}((\partial\sigma)_I)} & {\H_{n-1}(\K_I)} & {\H_{n-1}(\L_I)} & 0
	\arrow["{\varphi^*}", from=1-1, to=1-2]
	\arrow["f_n", from=1-2, to=1-3]
	\arrow[from=1-3, to=1-4]
\end{tikzcd}\]

        From this and the Hochster decomposition we get that $f_n$ is surjective. The kernel of  $f_n$ equals the image of $\varphi^*$, further, this is non-trivial only if the following conditions are met:
        \begin{itemize}
            \item $\sigma\subseteq I$ as otherwise $(\partial\sigma)_I$ is contractible.
            \item $1\notin I$ as otherwise, the class corresponding to the $n-1$ sphere $\partial\sigma$ in $\K_I$ is $0$.
            \item $I\neq [2,m]$, as this would make $\K_I$ a disk making the image of $\varphi^*$ trivial.      
        \end{itemize}
        If the first condition is fulfilled, $(\partial\sigma)_I\cong S^{n-1}$ and so, $\H_{n-1}((\partial\sigma)_I)=\Z\gen{d^{simp}(\sigma)}$. Assume $I$ satisfies all conditions and let $x\in [n+3,m]\setminus I$, we have the following commutative diagram of inclusions (left) which induces a commutative diagram in reduced homology (right)
        \[\begin{tikzcd}
	{\K_\sigma} & {\K_{I}} && {\H_{n-1}(\K_\sigma)} & {\H_{n-1}(\K_I)} \\
	& {\K_{[2,m]\setminus\{x\}}} &&& {\H_{n-1}( \K_{[2,m]\setminus\{x\}})}
	\arrow[hook, from=1-1, to=1-2]
	\arrow[hook, from=1-1, to=2-2]
	\arrow[hook, from=1-2, to=2-2]
	\arrow["{\varphi^*}", from=1-4, to=1-5]
	\arrow["g"', from=1-4, to=2-5]
	\arrow[from=1-5, to=2-5]
\end{tikzcd}\]
        Notice that $\K_{[2,m]\setminus\{x\}}$ is an annulus around $\partial\sigma$, meaning that $g$ has to be an isomorphism. Therefore, $\varphi^*$ has non-trivial image in $\H_{n-1}(\K_I)$. Putting this all together we consider the kernel of $f_n$ as the cochain complex given by
    \[(\ker f_n)^l\cong\left\{ \begin{array}{cl}
         \bigoplus\limits_{\substack{J\in \T\\|J|=l}} \Z_J& \text{for }n+1\leq l\leq m-2 \\
         0 &\text{else} 
    \end{array}\right.\]
    where $\T=\{J\subseteq [2,m]:\sigma\subseteq J\}$ and $\Z_J$ is generated by the class of $d^{simp}(\sigma)\in \Tilde{C}_{n-1}(\K_J)$.\\
    
    \noindent As $f_n$ is surjective, we have the short exact sequence of cochain complexes
        \[\begin{tikzcd}[ampersand replacement=\&]
	0 \& {\text{ker }f_n} \& {CH_{n}^*(\ZZ_\K)} \& {CH_{n}^*(\ZZ_\L)} \& 0
	\arrow["{f_n}", from=1-3, to=1-4]
	\arrow[from=1-2, to=1-3]
	\arrow[from=1-4, to=1-5]
	\arrow[from=1-1, to=1-2]
\end{tikzcd}\]
        which induces the long exact sequence in cohomology \[\begin{tikzcd}[ampersand replacement=\&]
	{HH_{n}^{l-1}(\ZZ_\L)} \& {H^l(\text{ker }f_n)} \& {HH_{n}^l(\ZZ_\K)} \& {HH_{n}^l(\ZZ_\L).}
	\arrow["{f_n}", from=1-3, to=1-4]
	\arrow[from=1-2, to=1-3]
	\arrow["\delta", from=1-1, to=1-2]
\end{tikzcd}\]
As $\L$ is wedge-decomposable and $n>1$, $HH_{n}^*(\ZZ_\L)=0$, meaning that $HH_{n}^l(\ZZ_\K)\cong H^l(\text{ker }f_n)$. Now we will compute the cohomology of $\text{ker }f_n$. Let $w:\P[m]\to\Z_{\geq 0}$ be given by $w(J)=|J\cap[2,m-1]|$, this induces a decreasing filtration $F$ on $\text{ker }f_n$ given by
    \[F_p=\bigoplus_{\substack{J\in\T\\w(J)\geq p}}\Z_J.\]
    The associated spectral sequence converges to $H^*(\text{ker }f_n)$ and has $E_0$ page given by
    \[E_0^{p,q}=\frac{F^{p+q}_p}{F^{p+q}_{p+1}}\cong \bigoplus_{\substack{J\in \T\\w(J)=p\\|J|=p+q}}\Z_J\]
    but as $|J\setminus (J\cap[2,m-1])|\leq 1$, then $|J|-w(J)\leq 1$ and so $q$ can only be either $0$ or $1$, and furhter
    \begin{align*}
        E_0^{p,0}&=\bigoplus_{\substack{J\in \T\\|J|=p\\m\notin J}}\Z_J\\
        E_0^{p,1}&=\bigoplus_{\substack{J\in \T\\J\neq [2,m-1]\\|J|=p\\m\notin J}}\Z_{J\cup \{m\}}
    \end{align*}
    But the differential maps $\Z_J\to \Z_{J\cup\{m\}}$ isomorphically, except when $J=[2,m-1]$, where it's the $0$ map. The spectral sequence then converges in $E_1$ to
        \[E^{p,q}_\infty\cong E^{p,q}_1\cong\left\{\begin{array}{cc}
             \Z_{[2,m-1]}& \text{ for }(p,q)=(m-2,0)  \\
             0 & \text{ else.}
        \end{array}\right.\]
        and therefore
        \[HH_{n}^l(\ZZ_\K)\cong H^l(\text{ker }f_n)\cong \left\{\begin{array}{cc}
            \Z_{[2,m-1]} &\text{ for $l=m-2$}  \\
             0 &\text{ else.} 
        \end{array}\right.\]
\end{itemize}
Reindexing, we get the result.
\end{proof}
It's important to note that this fails to characterize rank 4 double homology for spheres.

\begin{exm}
    Consider the complex given by taking the connected sum of two octahedrons through a maximal simplex, a planar representation of this triangulation is:

    \tikzset{every picture/.style={line width=0.75pt}}
    \[
\begin{tikzpicture}[x=0.5pt,y=0.5pt,yscale=-1,xscale=1]
%uncomment if require: \path (0,300); %set diagram left start at 0, and has height of 300

%Shape: Triangle [id:dp558278968293527] 
\draw  [color={rgb, 255:red, 0; green, 0; blue, 0 }  ,draw opacity=0 ][fill={rgb, 255:red, 150; green, 237; blue, 230 }  ,fill opacity=0.61 ] (328.41,41.4) -- (437,248.2) -- (219.83,248.2) -- cycle ;
%Shape: Trapezoid [id:dp6612116073500549] 
\draw  [color={rgb, 255:red, 80; green, 227; blue, 194 }  ,draw opacity=0 ][fill={rgb, 255:red, 98; green, 205; blue, 177 }  ,fill opacity=0.4 ] (221,248.2) -- (309,150.6) -- (349,150.6) -- (437,248.2) -- cycle ;
%Straight Lines [id:da31982337701849084] 
\draw [color={rgb, 255:red, 65; green, 117; blue, 5 }  ,draw opacity=1 ][line width=1.5]    (438.17,248.2) -- (221,248.2) ;
%Straight Lines [id:da018798931377413064] 
\draw [color={rgb, 255:red, 65; green, 117; blue, 5 }  ,draw opacity=1 ][line width=1.5]    (384.92,189.4) -- (274.25,189.4) ;
%Straight Lines [id:da5694035738488177] 
\draw [color={rgb, 255:red, 65; green, 117; blue, 5 }  ,draw opacity=1 ][line width=1.5]    (350,150.6) -- (309.05,150.6) ;
%Straight Lines [id:da9765910683380679] 
\draw [color={rgb, 255:red, 65; green, 117; blue, 5 }  ,draw opacity=1 ][line width=1.5]    (309.05,150.6) -- (221,248.2) ;
%Straight Lines [id:da3451940128764892] 
\draw [color={rgb, 255:red, 65; green, 117; blue, 5 }  ,draw opacity=1 ][line width=1.5]    (350,150.6) -- (438.17,248.2) ;
%Straight Lines [id:da19722706952737923] 
\draw [color={rgb, 255:red, 65; green, 117; blue, 5 }  ,draw opacity=1 ][line width=1.5]    (329.59,41.4) -- (221,248.2) ;
%Straight Lines [id:da119571817716561] 
\draw [color={rgb, 255:red, 65; green, 117; blue, 5 }  ,draw opacity=1 ][line width=1.5]    (329.59,41.4) -- (274.25,189.4) ;
%Straight Lines [id:da4161779750561856] 
\draw [color={rgb, 255:red, 65; green, 117; blue, 5 }  ,draw opacity=1 ][line width=1.5]    (329.59,41.4) -- (309.05,150.6) ;
%Straight Lines [id:da6638726245492153] 
\draw [color={rgb, 255:red, 65; green, 117; blue, 5 }  ,draw opacity=1 ][line width=1.5]    (329.59,41.4) -- (350,150.6) ;
%Straight Lines [id:da0810039039099626] 
\draw [color={rgb, 255:red, 65; green, 117; blue, 5 }  ,draw opacity=1 ][line width=1.5]    (329.59,41.4) -- (384.92,189.4) ;
%Straight Lines [id:da6616913866922076] 
\draw [color={rgb, 255:red, 65; green, 117; blue, 5 }  ,draw opacity=1 ][line width=1.5]    (329.59,41.4) -- (438.17,248.2) ;
%Straight Lines [id:da7841109447469452] 
\draw [color={rgb, 255:red, 65; green, 117; blue, 5 }  ,draw opacity=1 ][line width=1.5]    (329.59,218.8) -- (221,248.2) ;
%Straight Lines [id:da1384552178568017] 
\draw [color={rgb, 255:red, 65; green, 117; blue, 5 }  ,draw opacity=1 ][line width=1.5]    (329.59,218.8) -- (274.25,189.4) ;
%Straight Lines [id:da6835462660157949] 
\draw [color={rgb, 255:red, 65; green, 117; blue, 5 }  ,draw opacity=1 ][line width=1.5]    (329.59,218.8) -- (384.92,189.4) ;
%Straight Lines [id:da010838528107479428] 
\draw [color={rgb, 255:red, 65; green, 117; blue, 5 }  ,draw opacity=1 ][line width=1.5]    (438.17,248.2) -- (329.59,218.8) ;
%Straight Lines [id:da9908082457743415] 
\draw [color={rgb, 255:red, 65; green, 117; blue, 5 }  ,draw opacity=1 ][line width=1.5]    (274.25,189.4) -- (329.59,170) ;
%Straight Lines [id:da3665056958340187] 
\draw [color={rgb, 255:red, 65; green, 117; blue, 5 }  ,draw opacity=1 ][line width=1.5]    (329.59,170) -- (384.92,189.4) ;
%Straight Lines [id:da502012619056694] 
\draw [color={rgb, 255:red, 65; green, 117; blue, 5 }  ,draw opacity=1 ][line width=1.5]    (309.05,150.6) -- (329.59,170) ;
%Straight Lines [id:da28830269667581887] 
\draw [color={rgb, 255:red, 65; green, 117; blue, 5 }  ,draw opacity=1 ][line width=1.5]    (350,150.6) -- (329.59,170) ;
%Shape: Ellipse [id:dp7346124873326882] 
\draw  [fill={rgb, 255:red, 80; green, 227; blue, 194 }  ,fill opacity=1 ] (210.67,248.2) .. controls (210.67,242.49) and (215.29,237.87) .. (221,237.87) .. controls (226.71,237.87) and (231.33,242.49) .. (231.33,248.2) .. controls (231.33,253.91) and (226.71,258.53) .. (221,258.53) .. controls (215.29,258.53) and (210.67,253.91) .. (210.67,248.2) -- cycle ;
%Shape: Ellipse [id:dp7711791332689193] 
\draw  [fill={rgb, 255:red, 80; green, 227; blue, 194 }  ,fill opacity=1 ] (427.84,248.2) .. controls (427.84,242.49) and (432.46,237.87) .. (438.17,237.87) .. controls (443.88,237.87) and (448.5,242.49) .. (448.5,248.2) .. controls (448.5,253.91) and (443.88,258.53) .. (438.17,258.53) .. controls (432.46,258.53) and (427.84,253.91) .. (427.84,248.2) -- cycle ;
%Shape: Ellipse [id:dp7430897811918586] 
\draw  [fill={rgb, 255:red, 80; green, 227; blue, 194 }  ,fill opacity=1 ] (321.12,218.8) .. controls (321.12,214.12) and (324.91,210.33) .. (329.59,210.33) .. controls (334.26,210.33) and (338.05,214.12) .. (338.05,218.8) .. controls (338.05,223.48) and (334.26,227.27) .. (329.59,227.27) .. controls (324.91,227.27) and (321.12,223.48) .. (321.12,218.8) -- cycle ;
%Shape: Ellipse [id:dp8826654401634264] 
\draw  [fill={rgb, 255:red, 80; green, 227; blue, 194 }  ,fill opacity=1 ] (266.89,189.4) .. controls (266.89,185.34) and (270.19,182.04) .. (274.25,182.04) .. controls (278.31,182.04) and (281.61,185.34) .. (281.61,189.4) .. controls (281.61,193.46) and (278.31,196.76) .. (274.25,196.76) .. controls (270.19,196.76) and (266.89,193.46) .. (266.89,189.4) -- cycle ;
%Shape: Ellipse [id:dp578273703453595] 
\draw  [fill={rgb, 255:red, 80; green, 227; blue, 194 }  ,fill opacity=1 ] (323.45,170) .. controls (323.45,166.61) and (326.2,163.87) .. (329.59,163.87) .. controls (332.97,163.87) and (335.72,166.61) .. (335.72,170) .. controls (335.72,173.39) and (332.97,176.13) .. (329.59,176.13) .. controls (326.2,176.13) and (323.45,173.39) .. (323.45,170) -- cycle ;
%Shape: Ellipse [id:dp34091815087556165] 
\draw  [fill={rgb, 255:red, 80; green, 227; blue, 194 }  ,fill opacity=1 ] (304.47,150.6) .. controls (304.47,148.07) and (306.52,146.02) .. (309.05,146.02) .. controls (311.58,146.02) and (313.63,148.07) .. (313.63,150.6) .. controls (313.63,153.13) and (311.58,155.18) .. (309.05,155.18) .. controls (306.52,155.18) and (304.47,153.13) .. (304.47,150.6) -- cycle ;
%Shape: Ellipse [id:dp6639391057349275] 
\draw  [fill={rgb, 255:red, 80; green, 227; blue, 194 }  ,fill opacity=1 ] (345.42,150.6) .. controls (345.42,148.07) and (347.47,146.02) .. (350,146.02) .. controls (352.53,146.02) and (354.58,148.07) .. (354.58,150.6) .. controls (354.58,153.13) and (352.53,155.18) .. (350,155.18) .. controls (347.47,155.18) and (345.42,153.13) .. (345.42,150.6) -- cycle ;
%Shape: Ellipse [id:dp6365265145732697] 
\draw  [fill={rgb, 255:red, 80; green, 227; blue, 194 }  ,fill opacity=1 ] (377.56,189.4) .. controls (377.56,185.34) and (380.86,182.04) .. (384.92,182.04) .. controls (388.98,182.04) and (392.28,185.34) .. (392.28,189.4) .. controls (392.28,193.46) and (388.98,196.76) .. (384.92,196.76) .. controls (380.86,196.76) and (377.56,193.46) .. (377.56,189.4) -- cycle ;
%Shape: Ellipse [id:dp27889034667345913] 
\draw  [fill={rgb, 255:red, 80; green, 227; blue, 194 }  ,fill opacity=1 ] (321.12,41.4) .. controls (321.12,36.72) and (324.91,32.93) .. (329.59,32.93) .. controls (334.26,32.93) and (338.05,36.72) .. (338.05,41.4) .. controls (338.05,46.08) and (334.26,49.87) .. (329.59,49.87) .. controls (324.91,49.87) and (321.12,46.08) .. (321.12,41.4) -- cycle ;

\end{tikzpicture}
\]
    
\noindent We'll show in Theorem \ref{nonprim} that
    \[HH_*(\ZZ_\K;\F)\cong\left\{\begin{array}{cl}
        \F & \text{for }(-k,2l)=(0,0),(-1,4),(-5,14),(-6,18)  \\
         0 & \text{else.}
    \end{array}\right.\]
\end{exm}
\noindent Notice that this has double homology as in Proposition \ref{3minvert}, even though it's a 2-sphere triangulation of minimal degree $4$. 

\begin{defin}
    We say an $n$-sphere triangulation $\K$ is \textit{primitive} if there aren't any $n$-sphere triangulations $\K^1$, $\K^2$ such that $\K=\K^1\#_\sigma \K^2$. 
\end{defin}

Notice that triangulations $\K$ with $\delta(\K)=n+1$ are non-primitive, this is because, for a vertex $x$ of such a degree, $lk(x)$ is an $n-1$ triangulated sphere, so we can realize $\K$ as the connected sum of 2 spheres through $lk(x)$. With this in mind, revisiting the proof we can notice that what makes it work is precisely this separability. This leads us to expect the following: 

\begin{ques}
    Is the result of Proposition \ref{3minvert} also true for any non primitive sphere $\K$?
\end{ques} 

The difficulty in proving this lies in the integer coefficients. We give a partial answer to this question as the following result.

\begin{thm}{\label{nonprim}}
    Let $\K$ be a non-primitive $n$-sphere triangulation on $[m]$, then for any field $\F$
   \[HH_{-k,2l}(\ZZ_\K;\F)\cong\left\{\begin{array}{cl}
    \F & \text{for }(-k,2l)=(0,0),\;(-1,4),\;(n-m+2,2m-4),\;(n-m+1,2m) \\
     0 & \text{else.} 
\end{array}\right.\] 
or equivalently
\[HH_{j}^l(\ZZ_\K;\F)\cong\left\{\begin{array}{cl}
    \F & \text{for }(j,l)=(0,0),\;(1,2),\;(n,m-2),\;(n+1,m) \\
     0 & \text{else.} 
\end{array}\right.\] 
\end{thm}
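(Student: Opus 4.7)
The plan is to combine Lemma \ref{homo} with the Poincar\'e duality of Theorem \ref{duality}, exploiting that the four claimed non-trivial bidegrees form two pairs exchanged by the duality involution $(j,l) \mapsto (n+1-j, m-l)$. If $n = 1$ the statement reduces to Theorem \ref{cycles}, so I would assume $n \geq 2$. Since $\K$ is non-primitive, fix $n$-sphere triangulations $\K^1, \K^2$ and a common $n$-simplex $\sigma$ (so $|\sigma| = n+1$) with $\K = \K^1 \#_\sigma \K^2$. Because $\partial\sigma \subseteq \K^1 \cap \K^2 \subseteq \K$, the set $\L := \K \cup \{\sigma\}$ is a simplicial complex, and by the very definition of connected sum it equals the wedge-decomposable complex $\K^1 \sqcup_\sigma \K^2$. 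Theorem \ref{wedge} then yields $HH_j^l(\ZZ_\L;\F) = \F$ at $(j,l) \in \{(0,0),(1,2)\}$ and zero elsewhere.

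\textbf{Computation.} First I would apply Lemma \ref{homo} with $S = \sigma$: for every $j \leq n-1$ the inclusion $\K \hookrightarrow \L$ induces an isomorphism $HH_j^*(\ZZ_\K;\F) \cong HH_j^*(\ZZ_\L;\F)$. Since $n \geq 2$, both exceptional bidegrees $(0,0)$ and $(1,2)$ lie in this range, so those two classes survive in $\ZZ_\K$ and every other bidegree with $j \leq n-1$ vanishes. Next, since $\K$ is Gorenstein${}^*$ of dimension $n$, Theorem \ref{duality} provides $HH_j^l(\ZZ_\K;\F) \cong HH_{n+1-j}^{m-l}(\ZZ_\K;\F)$; for $j \geq 2$ the dual index $n+1-j$ sits in $[0, n-1]$ and is already controlled by the previous step. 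Dualizing the classes at $(0,0)$ and $(1,2)$ recovers the two remaining non-trivial classes at $(n+1, m)$ and $(n, m-2)$. The two ranges $j \leq n-1$ and $j \geq 2$ together exhaust $\{0,1,\dots,n+1\}$ (overlapping on $[2, n-1]$ when $n \geq 3$, where both arguments give zero), and $HH_j^* = 0$ trivially for $j$ outside this interval, completing the computation.

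\textbf{Main obstacle.} The argument is compact precisely because the heavy lifting is packaged into Theorems \ref{wedge} and \ref{duality}; the only point requiring verification is the hypothesis $\partial\sigma \subseteq \K$ that enables Lemma \ref{homo}, which is immediate from the construction of the connected sum. The hypothesis that $\F$ is a field enters only through Theorem \ref{duality}: over $\Z$ one would instead need a direct spectral-sequence analysis of the kernel of the inclusion-induced cochain map $f_n : CH_n^*(\ZZ_\K) \to CH_n^*(\ZZ_\L)$ in the style of Proposition \ref{3minvert}, which explains why the integer-coefficient analogue is left open as an outstanding question.
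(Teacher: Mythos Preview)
Your proposal is correct and follows essentially the same approach as the paper's proof: reduce $n=1$ to Theorem~\ref{cycles}, for $n\geq 2$ set $\L=\K\cup\{\sigma\}$, use Lemma~\ref{homo} together with Theorem~\ref{wedge} to handle small $j$, and then invoke the Gorenstein${}^*$ duality of Theorem~\ref{duality} to obtain the remaining bidegrees. The only cosmetic difference is that the paper first uses duality to reduce to $j\leq (n+1)/2$ and then notes this lies in the range covered by Lemma~\ref{homo}, whereas you first apply Lemma~\ref{homo} for $j\leq n-1$ and then dualize to cover $j\geq 2$; these are the same argument organized in opposite orders.
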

\begin{proof}
    The case $n=1$ follows from Theorem \ref{cycles}, then we assume $n\geq 2$. From Theorem \ref{duality}, we only need to compute $HH_j^*(\ZZ_\K;\F)$ for $j\leq \frac{n+1}{2}$. As $n\geq 2$, then $n-1-\frac{n+1}{2}=\frac{n-1}{2}>0$, and therefore $j<n-1$. We write $\K=\K^1\#_\sigma\K^2$, we use Lemma \ref{homo} to get that $HH_j^*(\ZZ_\K;\F)\cong HH_j^*(\ZZ_\L;\F)$ where $\L=\K\cup\{\sigma\}$. Finally, as $\L$ is wedge decomposable, using Theorem $\ref{wedge}$ and duality we get that 
    \[HH_{j}^l(\ZZ_\K;\F)\cong \left\{\begin{array}{cl}
        \F & \text{ for }(j,l)=(0,0),(1,2),(n,m-2),(n+1,m)\\
        0 & \text{ else.}
    \end{array}\right. \]
\end{proof}

\subsection{About 2-spheres} So far we know everything about the double homology of 1-spheres, so it's natural to try and understand it for higher spheres. As discussed before, these complexes get increasingly more complex when we increase the dimension, so for now we'll consider the triangulations of 2-spheres.\\

The previous section computes the double homology when the simplicial complex isn't primitive, then we are only left with primitive spheres.

\begin{thm}[{\cite[Theorem 5.4]{BAGCHI2016103}}]
    The only primitive 2-sphere triangulations $\K$ without  an induced cycle $\K_J$ with $|J|\equiv  1(\text{mod }3)$ are the icosahedron $I$ and the boundary of a 3-simplex $\partial\Delta^3$.
\end{thm}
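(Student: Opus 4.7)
The approach is to translate primitivity into a graph-theoretic condition on the 1-skeleton of $\K$ and then classify the possibilities using Euler's formula together with a local analysis of vertex links. The first step is to show that a 2-sphere triangulation is primitive exactly when every induced 3-cycle bounds a 2-face, equivalently when its 1-skeleton is a 4-connected maximal planar graph. Indeed, a decomposition $\K=\K^1\#_\sigma \K^2$ with $|\sigma|=3$ corresponds (via the Jordan curve theorem applied to $g(\K)\cong S^2$) to a 3-cycle separating $S^2$ into two closed 2-disks each containing vertices outside $\sigma$, which is exactly an induced triangle that is not a 2-face of $\K$.

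The second step is to analyze vertex links. For any $v\in[m]$, the link $lk(v)$ is a cycle of length $\deg(v)$ whose vertex set is the neighborhood of $v$. In a primitive triangulation this cycle is also induced in $\K$: any chord $\{a,b\}$ among non-adjacent link vertices would produce a separating triangle $\{v,a,b\}$ failing to bound a 2-face, since $\{v,a,b\}\in\K$ would force $a,b$ to be adjacent along $lk(v)$. Hence if some vertex satisfies $\deg(v)\equiv 1\pmod 3$, then $\K_{V(lk(v))}$ is already an induced cycle of forbidden length. Moreover, $\delta(\K)=3$ forces non-primitivity (the link of a degree-3 vertex provides a wedge decomposition), so we may assume all vertex degrees lie in $\{5,6,8,9,11,12,\ldots\}$, the exceptional case $|V(\K)|=4$ yielding $\partial\Delta^3$.

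The third step combines these degree restrictions with Euler's formula $\sum_v (6-\deg v)=12$ for triangulated 2-spheres. If every vertex has degree exactly 5, one obtains the icosahedron $I$ (the unique 4-connected 2-sphere triangulation on 12 vertices with constant degree 5). For any other degree sequence compatible with our constraints, the goal is to produce an induced cycle of length $4$, $7$, or $10$ not coming from a single link. The key idea would be that a vertex of degree $\geq 6$ in a 4-connected planar triangulation, together with a nearby pair of non-adjacent vertices, yields an induced 4-cycle via two common neighbors whose connecting chord is excluded by primitivity. The main obstacle is verifying this global obstruction uniformly across all mixed-degree configurations; my plan would be to proceed either by a discharging argument that concentrates excess curvature at a configuration forcing such a 4-cycle, or by an inductive contraction procedure on planar triangulations that reduces every non-icosahedral case to the appearance of an induced $4$-cycle. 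This case analysis is the technical heart of the proof and is where the detailed combinatorial work of \cite{BAGCHI2016103} is carried out.
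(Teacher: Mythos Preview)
The paper does not prove this theorem; it is quoted verbatim from \cite{BAGCHI2016103} and used as a black box in the discussion of 2-spheres. There is therefore no proof in the present paper to compare your proposal against.

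As for the proposal itself: the reductions you make are correct and standard. Primitivity of a 2-sphere triangulation on $m\geq 5$ vertices is indeed equivalent to every 3-cycle bounding a 2-face (equivalently, to 4-connectedness of the 1-skeleton), and it is true that in this situation each vertex link is an induced cycle, forcing all vertex degrees to avoid the residue class $1\pmod 3$. Combined with $\sum_v(6-\deg v)=12$, this correctly isolates the all-degree-5 case and identifies it with the icosahedron.

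However, your proposal is not a proof: you explicitly stop at the point where actual work is required, namely showing that any primitive triangulation with some vertex of degree $\geq 6$ (and all degrees $\not\equiv 1\pmod 3$) must contain an induced cycle of length $\equiv 1\pmod 3$. You gesture at a discharging argument or an inductive contraction, but neither is carried out, and the sentence ``a vertex of degree $\geq 6$ \ldots\ yields an induced 4-cycle via two common neighbors'' is not an argument---two non-adjacent link vertices of $v$ need not have a second common neighbor, and even if they do, the resulting 4-cycle need not be induced. This is precisely the substantive combinatorial content of the theorem, and your write-up defers it entirely to \cite{BAGCHI2016103}. What you have written is a correct setup, not a proof.
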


\noindent For $\partial\Delta^3$, as all subcomplexes are contractible then $CH_*(\ZZ_{\partial\Delta^3};\Z)=\Z_{0,0}\oplus\Z_{(-1,8)}$ and so it coincides with $HH$. The icosahedron on the other hand is much more complicated, a planar representation of it is as follows

\[
\begin{tikzpicture}[x=0.5pt,y=0.5pt,yscale=-1,xscale=1]
%uncomment if require: \path (0,300); %set diagram left start at 0, and has height of 300

%Shape: Triangle [id:dp586307279651126] 
\draw  [color={rgb, 255:red, 0; green, 0; blue, 0 }  ,draw opacity=0 ][fill={rgb, 255:red, 150; green, 237; blue, 230 }  ,fill opacity=0.61 ] (293.17,42.73) -- (401.76,230.8) -- (184.59,230.8) -- cycle ;
%Shape: Triangle [id:dp7146216778235333] 
\draw  [color={rgb, 255:red, 0; green, 0; blue, 0 }  ,draw opacity=0 ][fill={rgb, 255:red, 98; green, 205; blue, 177 }  ,fill opacity=0.4 ] (238.02,198.1) -- (292.3,211.56) -- (184.59,230.8) -- cycle ;
%Shape: Triangle [id:dp002196585528711603] 
\draw  [color={rgb, 255:red, 0; green, 0; blue, 0 }  ,draw opacity=0 ][fill={rgb, 255:red, 98; green, 205; blue, 177 }  ,fill opacity=0.4 ] (346.22,197.73) -- (401.76,230.8) -- (294.02,211.71) -- cycle ;
%Shape: Triangle [id:dp14448712588592771] 
\draw  [color={rgb, 255:red, 0; green, 0; blue, 0 }  ,draw opacity=0 ][fill={rgb, 255:red, 98; green, 205; blue, 177 }  ,fill opacity=0.4 ] (346.91,199.14) -- (292.97,211.5) -- (292.97,186.8) -- cycle ;
%Shape: Triangle [id:dp21533226370609437] 
\draw  [color={rgb, 255:red, 0; green, 0; blue, 0 }  ,draw opacity=0 ][fill={rgb, 255:red, 98; green, 205; blue, 177 }  ,fill opacity=0.4 ] (239.11,199.65) -- (293.17,187.8) -- (292.94,212.5) -- cycle ;
%Shape: Triangle [id:dp6024782691253772] 
\draw  [color={rgb, 255:red, 0; green, 0; blue, 0 }  ,draw opacity=0 ][fill={rgb, 255:red, 98; green, 205; blue, 177 }  ,fill opacity=0.4 ] (292.97,211.5) -- (401.18,230.5) -- (184.76,230.5) -- cycle ;
%Straight Lines [id:da5548081047399438] 
\draw [color={rgb, 255:red, 65; green, 117; blue, 5 }  ,draw opacity=1 ][line width=1.5]    (401.76,230.8) -- (184.59,230.8) ;
%Straight Lines [id:da2620489692388237] 
\draw [color={rgb, 255:red, 65; green, 117; blue, 5 }  ,draw opacity=1 ][line width=1.5]    (293.17,42.73) -- (184.59,230.8) ;
%Straight Lines [id:da6773967317825058] 
\draw [color={rgb, 255:red, 65; green, 117; blue, 5 }  ,draw opacity=1 ][line width=1.5]    (293.17,42.73) -- (401.76,230.8) ;
%Straight Lines [id:da421047847319042] 
\draw [color={rgb, 255:red, 65; green, 117; blue, 5 }  ,draw opacity=1 ][line width=1.5]    (293.17,42.73) -- (293.17,106.05) ;
%Shape: Boxed Line [id:dp5834380599252362] 
\draw [color={rgb, 255:red, 65; green, 117; blue, 5 }  ,draw opacity=1 ][line width=1.5]    (346.91,199.14) -- (401.76,230.8) ;
%Shape: Boxed Line [id:dp8443846299851239] 
\draw [color={rgb, 255:red, 65; green, 117; blue, 5 }  ,draw opacity=1 ][line width=1.5]    (239.43,199.14) -- (184.59,230.8) ;
%Straight Lines [id:da6027735589460799] 
\draw [color={rgb, 255:red, 65; green, 117; blue, 5 }  ,draw opacity=1 ][line width=1.5]    (293.17,106.05) -- (331.01,146.26) ;
%Straight Lines [id:da6634882067384253] 
\draw [color={rgb, 255:red, 65; green, 117; blue, 5 }  ,draw opacity=1 ][line width=1.5]    (331.01,146.26) -- (346.91,199.14) ;
%Straight Lines [id:da6729040789245282] 
\draw [color={rgb, 255:red, 65; green, 117; blue, 5 }  ,draw opacity=1 ][line width=1.5]    (293.17,106.05) -- (255.33,146.26) ;
%Straight Lines [id:da7272261799459463] 
\draw [color={rgb, 255:red, 65; green, 117; blue, 5 }  ,draw opacity=1 ][line width=1.5]    (255.33,146.26) -- (239.43,199.14) ;
%Straight Lines [id:da6543417369921236] 
\draw [color={rgb, 255:red, 65; green, 117; blue, 5 }  ,draw opacity=1 ][line width=1.5]    (346.91,199.14) -- (293.17,211.8) ;
%Straight Lines [id:da4872478974088006] 
\draw [color={rgb, 255:red, 65; green, 117; blue, 5 }  ,draw opacity=1 ][line width=1.5]    (293.17,211.8) -- (239.43,199.14) ;
%Straight Lines [id:da21783539075605085] 
\draw [color={rgb, 255:red, 65; green, 117; blue, 5 }  ,draw opacity=1 ][line width=1.5]    (293.17,187.8) -- (293.17,211.8) ;
%Shape: Boxed Line [id:dp801590277792483] 
\draw [color={rgb, 255:red, 65; green, 117; blue, 5 }  ,draw opacity=1 ][line width=1.5]    (255.33,146.26) -- (276.12,158.26) ;
%Shape: Boxed Line [id:dp2408325189814946] 
\draw [color={rgb, 255:red, 65; green, 117; blue, 5 }  ,draw opacity=1 ][line width=1.5]    (331.01,146.26) -- (310.22,158.26) ;
%Straight Lines [id:da78177026270784] 
\draw [color={rgb, 255:red, 65; green, 117; blue, 5 }  ,draw opacity=1 ][line width=1.5]    (276.12,158.26) -- (293.17,187.8) ;
%Straight Lines [id:da8628989268721219] 
\draw [color={rgb, 255:red, 65; green, 117; blue, 5 }  ,draw opacity=1 ][line width=1.5]    (310.22,158.26) -- (293.17,187.8) ;
%Straight Lines [id:da4509972074714026] 
\draw [color={rgb, 255:red, 65; green, 117; blue, 5 }  ,draw opacity=1 ][line width=1.5]    (276.12,158.26) -- (310.22,158.26) ;
%Straight Lines [id:da7826771427789612] 
\draw [color={rgb, 255:red, 65; green, 117; blue, 5 }  ,draw opacity=1 ][line width=1.5]    (293.17,106.05) -- (276.12,158.26) ;
%Straight Lines [id:da9638886494518979] 
\draw [color={rgb, 255:red, 65; green, 117; blue, 5 }  ,draw opacity=1 ][line width=1.5]    (293.17,106.05) -- (310.22,158.26) ;
%Straight Lines [id:da7097435112788042] 
\draw [color={rgb, 255:red, 65; green, 117; blue, 5 }  ,draw opacity=1 ][line width=1.5]    (310.22,158.26) -- (346.91,199.14) ;
%Straight Lines [id:da711123092010445] 
\draw [color={rgb, 255:red, 65; green, 117; blue, 5 }  ,draw opacity=1 ][line width=1.5]    (293.17,187.8) -- (346.91,199.14) ;
%Straight Lines [id:da2107801319518976] 
\draw [color={rgb, 255:red, 65; green, 117; blue, 5 }  ,draw opacity=1 ][line width=1.5]    (239.43,199.14) -- (293.17,187.8) ;
%Straight Lines [id:da21557224961855992] 
\draw [color={rgb, 255:red, 65; green, 117; blue, 5 }  ,draw opacity=1 ][line width=1.5]    (239.43,199.14) -- (276.12,158.26) ;
%Straight Lines [id:da44340503571659506] 
\draw [color={rgb, 255:red, 65; green, 117; blue, 5 }  ,draw opacity=1 ][line width=1.5]    (293.17,42.73) -- (331.01,146.26) ;
%Straight Lines [id:da5955811820553849] 
\draw [color={rgb, 255:red, 65; green, 117; blue, 5 }  ,draw opacity=1 ][line width=1.5]    (293.17,42.73) -- (255.33,146.26) ;
%Straight Lines [id:da44674922438511144] 
\draw [color={rgb, 255:red, 65; green, 117; blue, 5 }  ,draw opacity=1 ][line width=1.5]    (331.01,146.26) -- (401.76,230.8) ;
%Straight Lines [id:da24822115673977208] 
\draw [color={rgb, 255:red, 65; green, 117; blue, 5 }  ,draw opacity=1 ][line width=1.5]    (401.76,230.8) -- (293.17,211.8) ;
%Straight Lines [id:da2721195094928601] 
\draw [color={rgb, 255:red, 65; green, 117; blue, 5 }  ,draw opacity=1 ][line width=1.5]    (293.17,211.8) -- (184.59,230.8) ;
%Straight Lines [id:da41991448696376876] 
\draw [color={rgb, 255:red, 65; green, 117; blue, 5 }  ,draw opacity=1 ][line width=1.5]    (255.33,146.26) -- (184.59,230.8) ;
%Shape: Ellipse [id:dp44391546178101926] 
\draw  [fill={rgb, 255:red, 80; green, 227; blue, 194 }  ,fill opacity=1 ] (176.12,230.8) .. controls (176.12,226.12) and (179.91,222.33) .. (184.59,222.33) .. controls (189.26,222.33) and (193.05,226.12) .. (193.05,230.8) .. controls (193.05,235.48) and (189.26,239.27) .. (184.59,239.27) .. controls (179.91,239.27) and (176.12,235.48) .. (176.12,230.8) -- cycle ;
%Shape: Ellipse [id:dp5378292420916448] 
\draw  [fill={rgb, 255:red, 80; green, 227; blue, 194 }  ,fill opacity=1 ] (393.29,230.8) .. controls (393.29,226.12) and (397.08,222.33) .. (401.76,222.33) .. controls (406.43,222.33) and (410.22,226.12) .. (410.22,230.8) .. controls (410.22,235.48) and (406.43,239.27) .. (401.76,239.27) .. controls (397.08,239.27) and (393.29,235.48) .. (393.29,230.8) -- cycle ;
%Shape: Ellipse [id:dp35789027196900025] 
\draw  [fill={rgb, 255:red, 80; green, 227; blue, 194 }  ,fill opacity=1 ] (284.7,42.73) .. controls (284.7,38.05) and (288.49,34.26) .. (293.17,34.26) .. controls (297.85,34.26) and (301.64,38.05) .. (301.64,42.73) .. controls (301.64,47.4) and (297.85,51.19) .. (293.17,51.19) .. controls (288.49,51.19) and (284.7,47.4) .. (284.7,42.73) -- cycle ;
%Shape: Ellipse [id:dp10220215531308408] 
\draw  [fill={rgb, 255:red, 80; green, 227; blue, 194 }  ,fill opacity=1 ] (285.94,106.05) .. controls (285.94,102.06) and (289.18,98.82) .. (293.17,98.82) .. controls (297.16,98.82) and (300.4,102.06) .. (300.4,106.05) .. controls (300.4,110.05) and (297.16,113.29) .. (293.17,113.29) .. controls (289.18,113.29) and (285.94,110.05) .. (285.94,106.05) -- cycle ;
%Shape: Ellipse [id:dp761854303462224] 
\draw  [fill={rgb, 255:red, 80; green, 227; blue, 194 }  ,fill opacity=1 ] (248.1,146.26) .. controls (248.1,142.27) and (251.34,139.03) .. (255.33,139.03) .. controls (259.33,139.03) and (262.56,142.27) .. (262.56,146.26) .. controls (262.56,150.26) and (259.33,153.5) .. (255.33,153.5) .. controls (251.34,153.5) and (248.1,150.26) .. (248.1,146.26) -- cycle ;
%Shape: Ellipse [id:dp6660490213940522] 
\draw  [fill={rgb, 255:red, 80; green, 227; blue, 194 }  ,fill opacity=1 ] (323.78,146.26) .. controls (323.78,142.27) and (327.01,139.03) .. (331.01,139.03) .. controls (335,139.03) and (338.24,142.27) .. (338.24,146.26) .. controls (338.24,150.26) and (335,153.5) .. (331.01,153.5) .. controls (327.01,153.5) and (323.78,150.26) .. (323.78,146.26) -- cycle ;
%Shape: Ellipse [id:dp7457489706383018] 
\draw  [fill={rgb, 255:red, 80; green, 227; blue, 194 }  ,fill opacity=1 ] (232.2,199.14) .. controls (232.2,195.14) and (235.43,191.9) .. (239.43,191.9) .. controls (243.42,191.9) and (246.66,195.14) .. (246.66,199.14) .. controls (246.66,203.13) and (243.42,206.37) .. (239.43,206.37) .. controls (235.43,206.37) and (232.2,203.13) .. (232.2,199.14) -- cycle ;
%Shape: Ellipse [id:dp19849464776303827] 
\draw  [fill={rgb, 255:red, 80; green, 227; blue, 194 }  ,fill opacity=1 ] (285.94,211.8) .. controls (285.94,207.81) and (289.18,204.57) .. (293.17,204.57) .. controls (297.16,204.57) and (300.4,207.81) .. (300.4,211.8) .. controls (300.4,215.79) and (297.16,219.03) .. (293.17,219.03) .. controls (289.18,219.03) and (285.94,215.79) .. (285.94,211.8) -- cycle ;
%Shape: Ellipse [id:dp3748069556723339] 
\draw  [fill={rgb, 255:red, 80; green, 227; blue, 194 }  ,fill opacity=1 ] (339.68,199.14) .. controls (339.68,195.14) and (342.92,191.9) .. (346.91,191.9) .. controls (350.91,191.9) and (354.14,195.14) .. (354.14,199.14) .. controls (354.14,203.13) and (350.91,206.37) .. (346.91,206.37) .. controls (342.92,206.37) and (339.68,203.13) .. (339.68,199.14) -- cycle ;
%Shape: Ellipse [id:dp433964705604327] 
\draw  [fill={rgb, 255:red, 80; green, 227; blue, 194 }  ,fill opacity=1 ] (287.72,187.8) .. controls (287.72,184.79) and (290.16,182.35) .. (293.17,182.35) .. controls (296.18,182.35) and (298.62,184.79) .. (298.62,187.8) .. controls (298.62,190.81) and (296.18,193.25) .. (293.17,193.25) .. controls (290.16,193.25) and (287.72,190.81) .. (287.72,187.8) -- cycle ;
%Shape: Ellipse [id:dp8995698347940866] 
\draw  [fill={rgb, 255:red, 80; green, 227; blue, 194 }  ,fill opacity=1 ] (304.77,158.26) .. controls (304.77,155.25) and (307.21,152.81) .. (310.22,152.81) .. controls (313.23,152.81) and (315.67,155.25) .. (315.67,158.26) .. controls (315.67,161.27) and (313.23,163.71) .. (310.22,163.71) .. controls (307.21,163.71) and (304.77,161.27) .. (304.77,158.26) -- cycle ;
%Shape: Ellipse [id:dp11641379304726285] 
\draw  [fill={rgb, 255:red, 80; green, 227; blue, 194 }  ,fill opacity=1 ] (270.67,158.26) .. controls (270.67,155.25) and (273.11,152.81) .. (276.12,152.81) .. controls (279.13,152.81) and (281.57,155.25) .. (281.57,158.26) .. controls (281.57,161.27) and (279.13,163.71) .. (276.12,163.71) .. controls (273.11,163.71) and (270.67,161.27) .. (270.67,158.26) -- cycle ;

\end{tikzpicture}
\]

Although only having twelve vertices, the exponential nature of $HH$ makes computing this an extenuating task. To compute this we developed a program in Python 3.11 that can, in principle, compute the double homology of a given complex. In short, the program computes the reduced homology of all subcomplexes and by viewing how the non-zero elements in the appropiate degrees interact with each other obtains the differential of $CH$ in the form of a matrix. As it goes over every subset of the vertices, it's extremely slow after as little as 20 vertices or so. The program is open source and you can access it at \href{https://github.com/Gab-VR/HH}{https://github.com/Gab-VR/HH}. Using this program we obtain that the icosahedron has double homology
    \[HH_*(\ZZ_I;\Z/2)\cong \left\{\begin{array}{cl}
            \Z/2 &\text{for }(-k,2l)=(0,0),(-1,4),(-8,20),(-9,24)  \\
             (\Z/2)^{10}&\text{for }(-k,2l)=(-4,10),(-5,14)\\
             0&\text{else.}
    \end{array}\right.\]

\begin{ques} 
For a 2-sphere triangulation $\K$, what's the set of possible ranks of $HH_*(\ZZ_\K)$? Is this bounded?
\end{ques}

Thanks to Theorem 4.7, we only need to compute this when the triangulation has an induced cycle of length $1(mod$ $3)$. One possible approach to this is to study what elementary operations on spheres do to $HH$. In \cite{2-spheres-gen} the authors show that every sphere triangulation has minimal degree $3,$ $4$ or $5$ and show that any triangulation can be obtained by iterating 3 operations on $\partial \Delta^3$.\\

Focusing on the construction of spheres of minimal degree 5, we refer to \cite{Mindeg5}, where the authors define operations on the icosahedron to generate all spheres of minimal degree 5 such that properties such as the sphere being primitive are preserved. Using what they describe as operation $\mathscr{A}$, we modify the icosahedron to obtain what we'll refer to as the \textit{augmented icosahedron} $\mathscr{A}I$. This triangulationth has a planar representation as follows:

\[\begin{tikzpicture}[x=0.5pt,y=0.5pt,yscale=-1,xscale=1]
%uncomment if require: \path (0,300); %set diagram left start at 0, and has height of 300

%Shape: Triangle [id:dp586307279651126] 
\draw  [color={rgb, 255:red, 0; green, 0; blue, 0 }  ,draw opacity=0 ][fill={rgb, 255:red, 150; green, 237; blue, 230 }  ,fill opacity=0.61 ] (165.17,49.73) -- (273.76,237.8) -- (56.59,237.8) -- cycle ;
%Shape: Triangle [id:dp7146216778235333] 
\draw  [color={rgb, 255:red, 0; green, 0; blue, 0 }  ,draw opacity=0 ][fill={rgb, 255:red, 98; green, 205; blue, 177 }  ,fill opacity=0.4 ] (110.02,205.1) -- (164.3,218.56) -- (56.59,237.8) -- cycle ;
%Shape: Triangle [id:dp002196585528711603] 
\draw  [color={rgb, 255:red, 0; green, 0; blue, 0 }  ,draw opacity=0 ][fill={rgb, 255:red, 98; green, 205; blue, 177 }  ,fill opacity=0.4 ] (218.22,204.73) -- (273.76,237.8) -- (166.02,218.71) -- cycle ;
%Shape: Triangle [id:dp14448712588592771] 
\draw  [color={rgb, 255:red, 0; green, 0; blue, 0 }  ,draw opacity=0 ][fill={rgb, 255:red, 98; green, 205; blue, 177 }  ,fill opacity=0.4 ] (218.91,206.14) -- (164.97,218.5) -- (164.97,193.8) -- cycle ;
%Shape: Triangle [id:dp21533226370609437] 
\draw  [color={rgb, 255:red, 0; green, 0; blue, 0 }  ,draw opacity=0 ][fill={rgb, 255:red, 98; green, 205; blue, 177 }  ,fill opacity=0.4 ] (111.11,206.65) -- (165.17,194.8) -- (164.94,219.5) -- cycle ;
%Shape: Triangle [id:dp6024782691253772] 
\draw  [color={rgb, 255:red, 0; green, 0; blue, 0 }  ,draw opacity=0 ][fill={rgb, 255:red, 98; green, 205; blue, 177 }  ,fill opacity=0.4 ] (164.97,218.5) -- (273.18,237.5) -- (56.76,237.5) -- cycle ;
%Straight Lines [id:da5548081047399438] 
\draw [color={rgb, 255:red, 65; green, 117; blue, 5 }  ,draw opacity=1 ][line width=1.5]    (273.76,237.8) -- (56.59,237.8) ;
%Straight Lines [id:da2620489692388237] 
\draw [color={rgb, 255:red, 65; green, 117; blue, 5 }  ,draw opacity=1 ][line width=1.5]    (165.17,49.73) -- (56.59,237.8) ;
%Straight Lines [id:da6773967317825058] 
\draw [color={rgb, 255:red, 65; green, 117; blue, 5 }  ,draw opacity=1 ][line width=1.5]    (165.17,49.73) -- (273.76,237.8) ;
%Shape: Boxed Line [id:dp5834380599252362] 
\draw [color={rgb, 255:red, 65; green, 117; blue, 5 }  ,draw opacity=1 ][line width=1.5]    (218.91,206.14) -- (273.76,237.8) ;
%Shape: Boxed Line [id:dp8443846299851239] 
\draw [color={rgb, 255:red, 65; green, 117; blue, 5 }  ,draw opacity=1 ][line width=1.5]    (111.43,206.14) -- (56.59,237.8) ;
%Straight Lines [id:da6027735589460799] 
\draw [color={rgb, 255:red, 65; green, 117; blue, 5 }  ,draw opacity=1 ][line width=1.5]    (165.17,113.05) -- (203.01,153.26) ;
%Straight Lines [id:da6634882067384253] 
\draw [color={rgb, 255:red, 65; green, 117; blue, 5 }  ,draw opacity=1 ][line width=1.5]    (203.01,153.26) -- (218.91,206.14) ;
%Straight Lines [id:da6729040789245282] 
\draw [color={rgb, 255:red, 65; green, 117; blue, 5 }  ,draw opacity=1 ][line width=1.5]    (165.17,113.05) -- (127.33,153.26) ;
%Straight Lines [id:da7272261799459463] 
\draw [color={rgb, 255:red, 65; green, 117; blue, 5 }  ,draw opacity=1 ][line width=1.5]    (127.33,153.26) -- (111.43,206.14) ;
%Straight Lines [id:da6543417369921236] 
\draw [color={rgb, 255:red, 65; green, 117; blue, 5 }  ,draw opacity=1 ][line width=1.5]    (218.91,206.14) -- (165.17,218.8) ;
%Straight Lines [id:da4872478974088006] 
\draw [color={rgb, 255:red, 65; green, 117; blue, 5 }  ,draw opacity=1 ][line width=1.5]    (165.17,218.8) -- (111.43,206.14) ;
%Straight Lines [id:da21783539075605085] 
\draw [color={rgb, 255:red, 65; green, 117; blue, 5 }  ,draw opacity=1 ][line width=1.5]    (165.17,194.8) -- (165.17,218.8) ;
%Shape: Boxed Line [id:dp801590277792483] 
\draw [color={rgb, 255:red, 65; green, 117; blue, 5 }  ,draw opacity=1 ][line width=1.5]    (127.33,153.26) -- (148.12,165.26) ;
%Shape: Boxed Line [id:dp2408325189814946] 
\draw [color={rgb, 255:red, 65; green, 117; blue, 5 }  ,draw opacity=1 ][line width=1.5]    (203.01,153.26) -- (182.22,165.26) ;
%Straight Lines [id:da78177026270784] 
\draw [color={rgb, 255:red, 65; green, 117; blue, 5 }  ,draw opacity=1 ][line width=1.5]    (148.12,165.26) -- (165.17,194.8) ;
%Straight Lines [id:da8628989268721219] 
\draw [color={rgb, 255:red, 65; green, 117; blue, 5 }  ,draw opacity=1 ][line width=1.5]    (182.22,165.26) -- (165.17,194.8) ;
%Straight Lines [id:da4509972074714026] 
\draw [color={rgb, 255:red, 65; green, 117; blue, 5 }  ,draw opacity=1 ][line width=1.5]    (148.12,165.26) -- (182.22,165.26) ;
%Straight Lines [id:da7826771427789612] 
\draw [color={rgb, 255:red, 65; green, 117; blue, 5 }  ,draw opacity=1 ][line width=1.5]    (165.17,113.05) -- (148.12,165.26) ;
%Straight Lines [id:da9638886494518979] 
\draw [color={rgb, 255:red, 64; green, 64; blue, 197 }  ,draw opacity=1 ][line width=1.5]    (165.17,113.05) -- (182.22,165.26) ;
%Straight Lines [id:da7097435112788042] 
\draw [color={rgb, 255:red, 65; green, 117; blue, 5 }  ,draw opacity=1 ][line width=1.5]    (182.22,165.26) -- (218.91,206.14) ;
%Straight Lines [id:da711123092010445] 
\draw [color={rgb, 255:red, 65; green, 117; blue, 5 }  ,draw opacity=1 ][line width=1.5]    (165.17,194.8) -- (218.91,206.14) ;
%Straight Lines [id:da2107801319518976] 
\draw [color={rgb, 255:red, 65; green, 117; blue, 5 }  ,draw opacity=1 ][line width=1.5]    (111.43,206.14) -- (165.17,194.8) ;
%Straight Lines [id:da21557224961855992] 
\draw [color={rgb, 255:red, 65; green, 117; blue, 5 }  ,draw opacity=1 ][line width=1.5]    (111.43,206.14) -- (148.12,165.26) ;
%Straight Lines [id:da44340503571659506] 
\draw [color={rgb, 255:red, 65; green, 117; blue, 5 }  ,draw opacity=1 ][line width=1.5]    (165.17,49.73) -- (203.01,153.26) ;
%Straight Lines [id:da5955811820553849] 
\draw [color={rgb, 255:red, 65; green, 117; blue, 5 }  ,draw opacity=1 ][line width=1.5]    (165.17,49.73) -- (127.33,153.26) ;
%Straight Lines [id:da44674922438511144] 
\draw [color={rgb, 255:red, 65; green, 117; blue, 5 }  ,draw opacity=1 ][line width=1.5]    (203.01,153.26) -- (273.76,237.8) ;
%Straight Lines [id:da24822115673977208] 
\draw [color={rgb, 255:red, 65; green, 117; blue, 5 }  ,draw opacity=1 ][line width=1.5]    (273.76,237.8) -- (165.17,218.8) ;
%Straight Lines [id:da2721195094928601] 
\draw [color={rgb, 255:red, 65; green, 117; blue, 5 }  ,draw opacity=1 ][line width=1.5]    (165.17,218.8) -- (56.59,237.8) ;
%Straight Lines [id:da41991448696376876] 
\draw [color={rgb, 255:red, 65; green, 117; blue, 5 }  ,draw opacity=1 ][line width=1.5]    (127.33,153.26) -- (56.59,237.8) ;
%Shape: Ellipse [id:dp44391546178101926] 
\draw  [fill={rgb, 255:red, 80; green, 227; blue, 194 }  ,fill opacity=1 ] (48.12,237.8) .. controls (48.12,233.12) and (51.91,229.33) .. (56.59,229.33) .. controls (61.26,229.33) and (65.05,233.12) .. (65.05,237.8) .. controls (65.05,242.48) and (61.26,246.27) .. (56.59,246.27) .. controls (51.91,246.27) and (48.12,242.48) .. (48.12,237.8) -- cycle ;
%Shape: Ellipse [id:dp5378292420916448] 
\draw  [fill={rgb, 255:red, 80; green, 227; blue, 194 }  ,fill opacity=1 ] (265.29,237.8) .. controls (265.29,233.12) and (269.08,229.33) .. (273.76,229.33) .. controls (278.43,229.33) and (282.22,233.12) .. (282.22,237.8) .. controls (282.22,242.48) and (278.43,246.27) .. (273.76,246.27) .. controls (269.08,246.27) and (265.29,242.48) .. (265.29,237.8) -- cycle ;
%Shape: Ellipse [id:dp761854303462224] 
\draw  [fill={rgb, 255:red, 80; green, 227; blue, 194 }  ,fill opacity=1 ] (120.1,153.26) .. controls (120.1,149.27) and (123.34,146.03) .. (127.33,146.03) .. controls (131.33,146.03) and (134.56,149.27) .. (134.56,153.26) .. controls (134.56,157.26) and (131.33,160.5) .. (127.33,160.5) .. controls (123.34,160.5) and (120.1,157.26) .. (120.1,153.26) -- cycle ;
%Shape: Ellipse [id:dp6660490213940522] 
\draw  [fill={rgb, 255:red, 80; green, 227; blue, 194 }  ,fill opacity=1 ] (195.78,153.26) .. controls (195.78,149.27) and (199.01,146.03) .. (203.01,146.03) .. controls (207,146.03) and (210.24,149.27) .. (210.24,153.26) .. controls (210.24,157.26) and (207,160.5) .. (203.01,160.5) .. controls (199.01,160.5) and (195.78,157.26) .. (195.78,153.26) -- cycle ;
%Shape: Ellipse [id:dp7457489706383018] 
\draw  [fill={rgb, 255:red, 80; green, 227; blue, 194 }  ,fill opacity=1 ] (104.2,206.14) .. controls (104.2,202.14) and (107.43,198.9) .. (111.43,198.9) .. controls (115.42,198.9) and (118.66,202.14) .. (118.66,206.14) .. controls (118.66,210.13) and (115.42,213.37) .. (111.43,213.37) .. controls (107.43,213.37) and (104.2,210.13) .. (104.2,206.14) -- cycle ;
%Shape: Ellipse [id:dp19849464776303827] 
\draw  [fill={rgb, 255:red, 80; green, 227; blue, 194 }  ,fill opacity=1 ] (157.94,218.8) .. controls (157.94,214.81) and (161.18,211.57) .. (165.17,211.57) .. controls (169.16,211.57) and (172.4,214.81) .. (172.4,218.8) .. controls (172.4,222.79) and (169.16,226.03) .. (165.17,226.03) .. controls (161.18,226.03) and (157.94,222.79) .. (157.94,218.8) -- cycle ;
%Shape: Ellipse [id:dp3748069556723339] 
\draw  [fill={rgb, 255:red, 80; green, 227; blue, 194 }  ,fill opacity=1 ] (211.68,206.14) .. controls (211.68,202.14) and (214.92,198.9) .. (218.91,198.9) .. controls (222.91,198.9) and (226.14,202.14) .. (226.14,206.14) .. controls (226.14,210.13) and (222.91,213.37) .. (218.91,213.37) .. controls (214.92,213.37) and (211.68,210.13) .. (211.68,206.14) -- cycle ;
%Shape: Ellipse [id:dp433964705604327] 
\draw  [fill={rgb, 255:red, 80; green, 227; blue, 194 }  ,fill opacity=1 ] (159.72,194.8) .. controls (159.72,191.79) and (162.16,189.35) .. (165.17,189.35) .. controls (168.18,189.35) and (170.62,191.79) .. (170.62,194.8) .. controls (170.62,197.81) and (168.18,200.25) .. (165.17,200.25) .. controls (162.16,200.25) and (159.72,197.81) .. (159.72,194.8) -- cycle ;
%Shape: Ellipse [id:dp8995698347940866] 
\draw  [fill={rgb, 255:red, 80; green, 227; blue, 194 }  ,fill opacity=1 ] (176.77,165.26) .. controls (176.77,162.25) and (179.21,159.81) .. (182.22,159.81) .. controls (185.23,159.81) and (187.67,162.25) .. (187.67,165.26) .. controls (187.67,168.27) and (185.23,170.71) .. (182.22,170.71) .. controls (179.21,170.71) and (176.77,168.27) .. (176.77,165.26) -- cycle ;
%Shape: Ellipse [id:dp11641379304726285] 
\draw  [fill={rgb, 255:red, 80; green, 227; blue, 194 }  ,fill opacity=1 ] (142.67,165.26) .. controls (142.67,162.25) and (145.11,159.81) .. (148.12,159.81) .. controls (151.13,159.81) and (153.57,162.25) .. (153.57,165.26) .. controls (153.57,168.27) and (151.13,170.71) .. (148.12,170.71) .. controls (145.11,170.71) and (142.67,168.27) .. (142.67,165.26) -- cycle ;
%Shape: Triangle [id:dp13615181280942923] 
\draw  [color={rgb, 255:red, 0; green, 0; blue, 0 }  ,draw opacity=0 ][fill={rgb, 255:red, 150; green, 237; blue, 230 }  ,fill opacity=0.61 ] (445.17,49.73) -- (553.76,237.8) -- (336.59,237.8) -- cycle ;
%Shape: Triangle [id:dp7321161153944677] 
\draw  [color={rgb, 255:red, 0; green, 0; blue, 0 }  ,draw opacity=0 ][fill={rgb, 255:red, 98; green, 205; blue, 177 }  ,fill opacity=0.4 ] (390.02,205.1) -- (444.3,218.56) -- (336.59,237.8) -- cycle ;
%Shape: Triangle [id:dp5132761126577823] 
\draw  [color={rgb, 255:red, 0; green, 0; blue, 0 }  ,draw opacity=0 ][fill={rgb, 255:red, 98; green, 205; blue, 177 }  ,fill opacity=0.4 ] (498.22,204.73) -- (553.76,237.8) -- (446.02,218.71) -- cycle ;
%Shape: Triangle [id:dp939052079062646] 
\draw  [color={rgb, 255:red, 0; green, 0; blue, 0 }  ,draw opacity=0 ][fill={rgb, 255:red, 98; green, 205; blue, 177 }  ,fill opacity=0.4 ] (498.91,206.14) -- (444.97,218.5) -- (444.97,193.8) -- cycle ;
%Shape: Triangle [id:dp6120883150475702] 
\draw  [color={rgb, 255:red, 0; green, 0; blue, 0 }  ,draw opacity=0 ][fill={rgb, 255:red, 98; green, 205; blue, 177 }  ,fill opacity=0.4 ] (391.11,206.65) -- (445.17,194.8) -- (444.94,219.5) -- cycle ;
%Shape: Triangle [id:dp11718993765766239] 
\draw  [color={rgb, 255:red, 0; green, 0; blue, 0 }  ,draw opacity=0 ][fill={rgb, 255:red, 98; green, 205; blue, 177 }  ,fill opacity=0.4 ] (444.97,218.5) -- (553.18,237.5) -- (336.76,237.5) -- cycle ;
%Straight Lines [id:da11040992985803189] 
\draw [color={rgb, 255:red, 65; green, 117; blue, 5 }  ,draw opacity=1 ][line width=1.5]    (553.76,237.8) -- (336.59,237.8) ;
%Straight Lines [id:da2707355353287997] 
\draw [color={rgb, 255:red, 65; green, 117; blue, 5 }  ,draw opacity=1 ][line width=1.5]    (445.17,49.73) -- (336.59,237.8) ;
%Straight Lines [id:da07426801792128179] 
\draw [color={rgb, 255:red, 65; green, 117; blue, 5 }  ,draw opacity=1 ][line width=1.5]    (445.17,49.73) -- (553.76,237.8) ;
%Shape: Boxed Line [id:dp7058571744206927] 
\draw [color={rgb, 255:red, 65; green, 117; blue, 5 }  ,draw opacity=1 ][line width=1.5]    (498.91,206.14) -- (553.76,237.8) ;
%Shape: Boxed Line [id:dp5770026013802647] 
\draw [color={rgb, 255:red, 65; green, 117; blue, 5 }  ,draw opacity=1 ][line width=1.5]    (391.43,206.14) -- (336.59,237.8) ;
%Straight Lines [id:da5217601817994355] 
\draw [color={rgb, 255:red, 65; green, 117; blue, 5 }  ,draw opacity=1 ][line width=1.5]    (457,119.05) -- (483.01,153.26) ;
%Straight Lines [id:da17745330710384088] 
\draw [color={rgb, 255:red, 65; green, 117; blue, 5 }  ,draw opacity=1 ][line width=1.5]    (483.01,153.26) -- (498.91,206.14) ;
%Straight Lines [id:da9990417171482158] 
\draw [color={rgb, 255:red, 65; green, 117; blue, 5 }  ,draw opacity=1 ][line width=1.5]    (433.17,119.05) -- (407.33,153.26) ;
%Straight Lines [id:da3451629331981865] 
\draw [color={rgb, 255:red, 65; green, 117; blue, 5 }  ,draw opacity=1 ][line width=1.5]    (407.33,153.26) -- (391.43,206.14) ;
%Straight Lines [id:da809604932029719] 
\draw [color={rgb, 255:red, 65; green, 117; blue, 5 }  ,draw opacity=1 ][line width=1.5]    (498.91,206.14) -- (445.17,218.8) ;
%Straight Lines [id:da9599673016327637] 
\draw [color={rgb, 255:red, 65; green, 117; blue, 5 }  ,draw opacity=1 ][line width=1.5]    (445.17,218.8) -- (391.43,206.14) ;
%Straight Lines [id:da1574995605403109] 
\draw [color={rgb, 255:red, 65; green, 117; blue, 5 }  ,draw opacity=1 ][line width=1.5]    (445.17,194.8) -- (445.17,218.8) ;
%Shape: Boxed Line [id:dp16079711388760942] 
\draw [color={rgb, 255:red, 65; green, 117; blue, 5 }  ,draw opacity=1 ][line width=1.5]    (407.33,153.26) -- (428.12,165.26) ;
%Shape: Boxed Line [id:dp32635799932934506] 
\draw [color={rgb, 255:red, 65; green, 117; blue, 5 }  ,draw opacity=1 ][line width=1.5]    (483.01,153.26) -- (462.22,165.26) ;
%Straight Lines [id:da06123750891256119] 
\draw [color={rgb, 255:red, 65; green, 117; blue, 5 }  ,draw opacity=1 ][line width=1.5]    (428.12,165.26) -- (445.17,194.8) ;
%Straight Lines [id:da3301235477622717] 
\draw [color={rgb, 255:red, 65; green, 117; blue, 5 }  ,draw opacity=1 ][line width=1.5]    (462.22,165.26) -- (445.17,194.8) ;
%Straight Lines [id:da155578790690182] 
\draw [color={rgb, 255:red, 65; green, 117; blue, 5 }  ,draw opacity=1 ][line width=1.5]    (428.12,165.26) -- (462.22,165.26) ;
%Straight Lines [id:da35833169935609344] 
\draw [color={rgb, 255:red, 65; green, 117; blue, 5 }  ,draw opacity=1 ][line width=1.5]    (433.17,119.05) -- (428.12,165.26) ;
%Straight Lines [id:da7181291272469712] 
\draw [color={rgb, 255:red, 65; green, 117; blue, 5 }  ,draw opacity=1 ][line width=1.5]    (462.22,165.26) -- (498.91,206.14) ;
%Straight Lines [id:da9449437967092964] 
\draw [color={rgb, 255:red, 65; green, 117; blue, 5 }  ,draw opacity=1 ][line width=1.5]    (445.17,194.8) -- (498.91,206.14) ;
%Straight Lines [id:da2173898748995282] 
\draw [color={rgb, 255:red, 65; green, 117; blue, 5 }  ,draw opacity=1 ][line width=1.5]    (391.43,206.14) -- (445.17,194.8) ;
%Straight Lines [id:da5925022745201662] 
\draw [color={rgb, 255:red, 65; green, 117; blue, 5 }  ,draw opacity=1 ][line width=1.5]    (391.43,206.14) -- (428.12,165.26) ;
%Straight Lines [id:da9783838216679528] 
\draw [color={rgb, 255:red, 65; green, 117; blue, 5 }  ,draw opacity=1 ][line width=1.5]    (445.17,49.73) -- (483.01,153.26) ;
%Straight Lines [id:da6766884033025011] 
\draw [color={rgb, 255:red, 65; green, 117; blue, 5 }  ,draw opacity=1 ][line width=1.5]    (445.17,49.73) -- (407.33,153.26) ;
%Straight Lines [id:da9579971593107783] 
\draw [color={rgb, 255:red, 65; green, 117; blue, 5 }  ,draw opacity=1 ][line width=1.5]    (483.01,153.26) -- (553.76,237.8) ;
%Straight Lines [id:da88219734738912] 
\draw [color={rgb, 255:red, 65; green, 117; blue, 5 }  ,draw opacity=1 ][line width=1.5]    (553.76,237.8) -- (445.17,218.8) ;
%Straight Lines [id:da5502352231212575] 
\draw [color={rgb, 255:red, 65; green, 117; blue, 5 }  ,draw opacity=1 ][line width=1.5]    (445.17,218.8) -- (336.59,237.8) ;
%Straight Lines [id:da6571113789466196] 
\draw [color={rgb, 255:red, 65; green, 117; blue, 5 }  ,draw opacity=1 ][line width=1.5]    (407.33,153.26) -- (336.59,237.8) ;
%Shape: Ellipse [id:dp2267285729443873] 
\draw  [fill={rgb, 255:red, 80; green, 227; blue, 194 }  ,fill opacity=1 ] (328.12,237.8) .. controls (328.12,233.12) and (331.91,229.33) .. (336.59,229.33) .. controls (341.26,229.33) and (345.05,233.12) .. (345.05,237.8) .. controls (345.05,242.48) and (341.26,246.27) .. (336.59,246.27) .. controls (331.91,246.27) and (328.12,242.48) .. (328.12,237.8) -- cycle ;
%Shape: Ellipse [id:dp1996730124503605] 
\draw  [fill={rgb, 255:red, 80; green, 227; blue, 194 }  ,fill opacity=1 ] (545.29,237.8) .. controls (545.29,233.12) and (549.08,229.33) .. (553.76,229.33) .. controls (558.43,229.33) and (562.22,233.12) .. (562.22,237.8) .. controls (562.22,242.48) and (558.43,246.27) .. (553.76,246.27) .. controls (549.08,246.27) and (545.29,242.48) .. (545.29,237.8) -- cycle ;
%Shape: Ellipse [id:dp14498593473933585] 
\draw  [fill={rgb, 255:red, 80; green, 227; blue, 194 }  ,fill opacity=1 ] (400.1,153.26) .. controls (400.1,149.27) and (403.34,146.03) .. (407.33,146.03) .. controls (411.33,146.03) and (414.56,149.27) .. (414.56,153.26) .. controls (414.56,157.26) and (411.33,160.5) .. (407.33,160.5) .. controls (403.34,160.5) and (400.1,157.26) .. (400.1,153.26) -- cycle ;
%Shape: Ellipse [id:dp604756182440376] 
\draw  [fill={rgb, 255:red, 80; green, 227; blue, 194 }  ,fill opacity=1 ] (475.78,153.26) .. controls (475.78,149.27) and (479.01,146.03) .. (483.01,146.03) .. controls (487,146.03) and (490.24,149.27) .. (490.24,153.26) .. controls (490.24,157.26) and (487,160.5) .. (483.01,160.5) .. controls (479.01,160.5) and (475.78,157.26) .. (475.78,153.26) -- cycle ;
%Shape: Ellipse [id:dp33735989479812156] 
\draw  [fill={rgb, 255:red, 80; green, 227; blue, 194 }  ,fill opacity=1 ] (384.2,206.14) .. controls (384.2,202.14) and (387.43,198.9) .. (391.43,198.9) .. controls (395.42,198.9) and (398.66,202.14) .. (398.66,206.14) .. controls (398.66,210.13) and (395.42,213.37) .. (391.43,213.37) .. controls (387.43,213.37) and (384.2,210.13) .. (384.2,206.14) -- cycle ;
%Shape: Ellipse [id:dp9464718395755005] 
\draw  [fill={rgb, 255:red, 80; green, 227; blue, 194 }  ,fill opacity=1 ] (437.94,218.8) .. controls (437.94,214.81) and (441.18,211.57) .. (445.17,211.57) .. controls (449.16,211.57) and (452.4,214.81) .. (452.4,218.8) .. controls (452.4,222.79) and (449.16,226.03) .. (445.17,226.03) .. controls (441.18,226.03) and (437.94,222.79) .. (437.94,218.8) -- cycle ;
%Shape: Ellipse [id:dp2275281688959483] 
\draw  [fill={rgb, 255:red, 80; green, 227; blue, 194 }  ,fill opacity=1 ] (491.68,206.14) .. controls (491.68,202.14) and (494.92,198.9) .. (498.91,198.9) .. controls (502.91,198.9) and (506.14,202.14) .. (506.14,206.14) .. controls (506.14,210.13) and (502.91,213.37) .. (498.91,213.37) .. controls (494.92,213.37) and (491.68,210.13) .. (491.68,206.14) -- cycle ;
%Shape: Ellipse [id:dp4021961309527182] 
\draw  [fill={rgb, 255:red, 80; green, 227; blue, 194 }  ,fill opacity=1 ] (439.72,194.8) .. controls (439.72,191.79) and (442.16,189.35) .. (445.17,189.35) .. controls (448.18,189.35) and (450.62,191.79) .. (450.62,194.8) .. controls (450.62,197.81) and (448.18,200.25) .. (445.17,200.25) .. controls (442.16,200.25) and (439.72,197.81) .. (439.72,194.8) -- cycle ;
%Shape: Ellipse [id:dp6856982648621395] 
\draw  [fill={rgb, 255:red, 80; green, 227; blue, 194 }  ,fill opacity=1 ] (456.77,165.26) .. controls (456.77,162.25) and (459.21,159.81) .. (462.22,159.81) .. controls (465.23,159.81) and (467.67,162.25) .. (467.67,165.26) .. controls (467.67,168.27) and (465.23,170.71) .. (462.22,170.71) .. controls (459.21,170.71) and (456.77,168.27) .. (456.77,165.26) -- cycle ;
%Straight Lines [id:da47140797312372684] 
\draw [color={rgb, 255:red, 0; green, 0; blue, 0 }  ,draw opacity=1 ][line width=1.5]    (433.17,119.05) -- (457,119.05) ;
%Shape: Ellipse [id:dp06933121143130072] 
\draw  [fill={rgb, 255:red, 80; green, 227; blue, 194 }  ,fill opacity=1 ] (422.67,165.26) .. controls (422.67,162.25) and (425.11,159.81) .. (428.12,159.81) .. controls (431.13,159.81) and (433.57,162.25) .. (433.57,165.26) .. controls (433.57,168.27) and (431.13,170.71) .. (428.12,170.71) .. controls (425.11,170.71) and (422.67,168.27) .. (422.67,165.26) -- cycle ;
%Straight Lines [id:da18237040315334863] 
\draw [color={rgb, 255:red, 64; green, 64; blue, 197 }  ,draw opacity=1 ][line width=1.5]    (165.17,113.05) -- (165.17,49.73) ;
%Straight Lines [id:da9442245384972114] 
\draw [color={rgb, 255:red, 64; green, 64; blue, 197 }  ,draw opacity=1 ][line width=1.5]    (445.17,49.73) -- (433.17,119.05) ;
%Straight Lines [id:da8351437547468359] 
\draw [color={rgb, 255:red, 64; green, 64; blue, 197 }  ,draw opacity=1 ][line width=1.5]    (445.17,49.73) -- (457,119.05) ;
%Straight Lines [id:da4154812789337725] 
\draw [color={rgb, 255:red, 64; green, 64; blue, 197 }  ,draw opacity=1 ][line width=1.5]    (433.17,119.05) -- (462.22,165.26) ;
%Straight Lines [id:da09895721821444203] 
\draw [color={rgb, 255:red, 64; green, 64; blue, 197 }  ,draw opacity=1 ][line width=1.5]    (457,119.05) -- (462.22,165.26) ;
%Shape: Ellipse [id:dp35789027196900025] 
\draw  [fill={rgb, 255:red, 80; green, 227; blue, 194 }  ,fill opacity=1 ] (156.7,49.73) .. controls (156.7,45.05) and (160.49,41.26) .. (165.17,41.26) .. controls (169.85,41.26) and (173.64,45.05) .. (173.64,49.73) .. controls (173.64,54.4) and (169.85,58.19) .. (165.17,58.19) .. controls (160.49,58.19) and (156.7,54.4) .. (156.7,49.73) -- cycle ;
%Shape: Ellipse [id:dp9505207729275091] 
\draw  [fill={rgb, 255:red, 80; green, 227; blue, 194 }  ,fill opacity=1 ] (436.7,49.73) .. controls (436.7,45.05) and (440.49,41.26) .. (445.17,41.26) .. controls (449.85,41.26) and (453.64,45.05) .. (453.64,49.73) .. controls (453.64,54.4) and (449.85,58.19) .. (445.17,58.19) .. controls (440.49,58.19) and (436.7,54.4) .. (436.7,49.73) -- cycle ;
%Shape: Ellipse [id:dp10220215531308408] 
\draw  [fill={rgb, 255:red, 74; green, 144; blue, 226 }  ,fill opacity=1 ] (157.94,113.05) .. controls (157.94,109.06) and (161.18,105.82) .. (165.17,105.82) .. controls (169.16,105.82) and (172.4,109.06) .. (172.4,113.05) .. controls (172.4,117.05) and (169.16,120.29) .. (165.17,120.29) .. controls (161.18,120.29) and (157.94,117.05) .. (157.94,113.05) -- cycle ;
%Shape: Ellipse [id:dp17627212875376697] 
\draw  [fill={rgb, 255:red, 80; green, 178; blue, 227 }  ,fill opacity=1 ] (425.94,119.05) .. controls (425.94,115.06) and (429.18,111.82) .. (433.17,111.82) .. controls (437.16,111.82) and (440.4,115.06) .. (440.4,119.05) .. controls (440.4,123.05) and (437.16,126.29) .. (433.17,126.29) .. controls (429.18,126.29) and (425.94,123.05) .. (425.94,119.05) -- cycle ;
%Shape: Ellipse [id:dp4062463481033769] 
\draw  [fill={rgb, 255:red, 80; green, 178; blue, 227 }  ,fill opacity=1 ] (449.77,119.05) .. controls (449.77,115.06) and (453.01,111.82) .. (457,111.82) .. controls (460.99,111.82) and (464.23,115.06) .. (464.23,119.05) .. controls (464.23,123.05) and (460.99,126.29) .. (457,126.29) .. controls (453.01,126.29) and (449.77,123.05) .. (449.77,119.05) -- cycle ;

% Text Node
\draw (284,119.4) node [anchor=north west][inner sep=0.75pt]  [font=\Large]  {$\mapsto $};

\end{tikzpicture}\]

Another way to construct sphere triangulations is to take an $n$-antiprism (obtained by taking $2$ $n$-cycles and connecting them by an alternating band of $2n$ triangles) and gluing $n$-gonal pyramids to each cycle. This is called the \textit{bicapped $n$-gonal antiprism}. Notice that the icosahedron is actually a bicapped pentagonal antiprism. Now, instead of using an antiprism, we can also use a tower of $m$ stacked $n$-antiprisms, we call the resulting sphere the \textit{bicapped $n$-gonal $m$-antiprism}\\

Using the results we have so far and our Python script, we have the following known examples as a partial answer to Question 4.12. 

\[\begin{tabular}{|c|c|c|}
     \hline \rowcolor{cyan!7}Example & Rank & Coefficients \\
     \hline $\partial\Delta^3$ & 2 & $\Z$\\
     \hline Any connected sum of a 2-sphere with $\partial \Delta^3$& 4 & $\Z$\\
     \hline $C^m*S^0$, for $m\geq 4$ & 8 & $\Z$\\
     \hline Bicapped square antiprism & 12 & $\Z/2$\\
     \hline Augmented Icosahedron $\mathscr{A}I$& 16& $\Z/2$\\
     \hline Bicapped square 2-antiprism & 20& $\Z/2$\\
     \hline Icosahedron &24 & $\Z/2$\\
     \hline Bicapped hexagonal antiprism &32& $\Z/2$\\
     \hline Bicapped heptagonal antiprism &32& $\Z/2$\\
     \hline Bicapped octagonal antiprism & 36 & $\Z/2$\\
     \hline
\end{tabular}\]

\section{Construction of complexes with exotic $HH_*$ rank}

Theorem \ref{join} lets us construct simplicial complexes with $HH$ of rank $2^n$, we say a complex has \textit{exotic} $HH_*$ rank if it's different from powers of $2$ (or 0). The first one was found in \cite{HAN2023108421} where they constructed a complex of $HH_{*}$ rank $6$. Later, in \cite{zhang2024rank} the author constructed complexes of arbitrary even double homology rank inductively. Using a different approach, in this section we discuss how an elementary operation changes the double homology rank of a sphere triangulation.\\

\begin{defin}
    We say a simplicial complex $\K$ is $p$-neighborly if $\K_J=\Delta^p$ for every $J\subseteq[m]$ with $|J|=p+1$. For $p=1$ we just say $\K$ is neighborly.
\end{defin}
\begin{thm}\label{removeface}
    Let $\K$ be an $n$-sphere triangulation on $[m]$, and let $\F$ be a field. For every maximal simplex $\sigma$ of $\K$ define $\L^\sigma=\K\setminus\{\sigma\}$.
        \begin{itemize}
            \item [$(i)$] If $\K$ is neighborly then for every maximal simplex $\sigma\in\K$
            \[dim\;HH_*(\ZZ_{\L^\sigma};\F)=dim\;HH_*(\ZZ_\K;\F)\]

            \item [$(ii)$] If $\K$ isn't neighborly then there exists a maximal simplex $\sigma\in\K$ such that 
            \[dim\;HH_*(\ZZ_{\L^\sigma};\F)=dim\;HH_*(\ZZ_\K;\F)-2.\]
        \end{itemize}
\end{thm}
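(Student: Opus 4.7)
The strategy is to compare $CH_*(\ZZ_\K;\F)$ and $CH_*(\ZZ_{\L^\sigma};\F)$ via the inclusion $\L^\sigma\hookrightarrow\K$ and to localize the entire change in a single connecting homomorphism. Since $|\sigma|=n+1$, the two complexes share their $(n-1)$-skeleton, so Lemma \ref{homo} yields $HH_j(\ZZ_{\L^\sigma};\F)\cong HH_j(\ZZ_\K;\F)$ for $j\le n-1$; Lemma \ref{lemdegn} together with $\L^\sigma$ being an $n$-disk forces $CH_{n+1}^*(\ZZ_{\L^\sigma})=0$ while $CH_{n+1}^m(\ZZ_\K)=\F$, so $\dim HH_{n+1}$ drops by exactly one. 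For each $J$ with $\sigma\subseteq J\ne[m]$, Mayer-Vietoris applied to $\K_J=\L^\sigma_J\cup\bar\sigma$ (intersection $\partial\sigma$), combined with $\H_n(\K_J)=0$, gives a short exact sequence $0\to\F\cdot[\partial\sigma]\to\H_{n-1}(\L^\sigma_J)\to\H_{n-1}(\K_J)\to 0$; these extra classes span a subcomplex $D^*\subset CH_n^*(\ZZ_{\L^\sigma})$ (the $CH$-differential sends $[\partial\sigma]_J$ to $\sum_{x\notin J}\pm[\partial\sigma]_{J\cup\{x\}}$, the term $J\cup\{x\}=[m]$ being automatically zero since $\H_{n-1}(\L^\sigma)=0$), fitting in a short exact sequence of cochain complexes $0\to D^*\to CH_n^*(\ZZ_{\L^\sigma})\to CH_n^*(\ZZ_\K)\to 0$.

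Reindexing $J=\sigma\sqcup K$ with $K\subsetneq[m]\setminus\sigma$ identifies $D^*$ up to a degree shift with the augmented simplicial cochain complex of $\partial\Delta^{m-n-2}\cong S^{m-n-3}$, so $H^l(D^*)=\F$ for $l=m-1$ and $0$ otherwise. The associated long exact sequence collapses to
\[0\to HH_n^{m-2}(\ZZ_{\L^\sigma})\to HH_n^{m-2}(\ZZ_\K)\xrightarrow{\delta}\F\to HH_n^{m-1}(\ZZ_{\L^\sigma})\to HH_n^{m-1}(\ZZ_\K)\to 0,\]
and chasing dimensions yields $\dim HH_*(\ZZ_{\L^\sigma};\F)-\dim HH_*(\ZZ_\K;\F)=-2\,\mathrm{rk}(\delta)$. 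Part $(i)$ is then immediate: if $\K$ is neighborly, combinatorial Alexander duality $\H_{n-1}(\K_J)\cong\H^0(\K_{[m]\setminus J})$ forces $CH_n^{m-2}(\ZZ_\K)=0$ (every two-element complement is a connected edge), so $\mathrm{rk}(\delta)=0$ for every $\sigma$.

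For part $(ii)$, pick a non-edge $\{i,j\}$ of $\K$ and any maximal simplex $\sigma$ containing $i$; automatically $j\notin\sigma$. Let $\alpha\in CH_n^{m-2}(\ZZ_\K)$ be supported at $J_0=[m]\setminus\{i,j\}$ with $\alpha_{J_0}=[\mathrm{link}(i)]$, which is a generator of the one-dimensional space $\H_{n-1}(\K_{J_0})$; it is a cocycle because $\H_{n-1}(\K_{[m]\setminus\{k\}})=0$ for every vertex $k$ by Alexander duality. Since $\sigma\not\subseteq J_0$, we have $\L^\sigma_{J_0}=\K_{J_0}$ and $\tilde\alpha:=\alpha$ is a canonical lift. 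The only component of $d\tilde\alpha$ landing in $D^{m-1}$ is at $J'=[m]\setminus\{j\}$ (the alternative $[m]\setminus\{i\}$ omits $\sigma$), and there the chain $\mathrm{star}(i)-\sigma\in C_n(\L^\sigma_{J'})$ has boundary $\mathrm{link}(i)-\partial\sigma$, so $[\mathrm{link}(i)]=[\partial\sigma]$ in $\H_{n-1}(\L^\sigma_{J'})$. Thus $d\tilde\alpha=\pm[\partial\sigma]_{J'}$, representing the generator of $H^{m-1}(D^*)\cong\F$; hence $\delta([\alpha])\ne 0$, $\mathrm{rk}(\delta)=1$, and the total dimension drops by~$2$.

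The main obstacle is the explicit cycle-level computation of $\delta$ in part $(ii)$: identifying the Alexander-dual generator of $\H_{n-1}(\K_{J_0})$ with the concrete cycle $\mathrm{link}(i)$, exhibiting the filling by $\mathrm{star}(i)$ inside the disk $\K_{[m]\setminus\{j\}}$, and recognising the resulting element of $D^{m-1}$ as a cohomological generator --- each step relies essentially on $\{i,j\}$ being a non-edge and on $\sigma$ containing exactly one of $i,j$.
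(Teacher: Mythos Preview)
Your proof is correct and follows essentially the same architecture as the paper's: compare $CH_*$ along the inclusion $\L^\sigma\hookrightarrow\K$, split into cases $j<n$, $j=n$, $j=n+1$, use Mayer--Vietoris to get a short exact sequence of cochain complexes at $j=n$, and analyze the resulting connecting homomorphism. Two of your local computations are cleaner than the paper's. First, where the paper computes $H^*(\ker f_n)$ via a filtration and a two-row spectral sequence, your reindexing $J=\sigma\sqcup K$ identifies $D^*$ directly with the augmented cochain complex of $\partial\Delta^{m-n-2}$ and reads off $H^{m-1}(D^*)\cong\F$ immediately. Second, your Euler-characteristic count on the five-term exact sequence gives $\dim HH_*(\ZZ_{\L^\sigma})-\dim HH_*(\ZZ_\K)=-2\,\mathrm{rk}(\delta)$ without first invoking duality to kill $HH_n^{m-1}(\ZZ_\K)$; the paper uses $HH_n^{m-1}(\ZZ_\K)\cong HH_1^1(\ZZ_\K)=0$ and then rank--nullity. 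For part~$(ii)$ the two arguments are mirror images: the paper takes the link of the non-adjacent vertex \emph{outside} $\sigma$ and argues via an ``annulus'' picture, while you take the link of the vertex \emph{inside} $\sigma$ and exhibit the explicit filling $\mathrm{star}(i)-\sigma$; both produce the same generator of $H^{m-1}(D^*)$.
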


\begin{proof}
    Throughout this proof we won't write the coefficients in homology or double homology, it should be clear we are working in field coefficients. Without loss of generality, set $\sigma=[n+1]$. For each $j\in \Z$, the inclusion $\L^\sigma\xhookrightarrow{}\K$ induces a cochain map $f_j:CH_{j}^*(\ZZ_\L)\to CH_{k}^*(\ZZ_\K)$. For $j>n+1$ we immediately have that $HH^*_j(\ZZ_\K)=HH^*_j(\ZZ_{\K})=0$, so we have three cases for $j$:
    \begin{itemize}
        \item  $(j<n)$ Lemma \ref{homo} implies that \[HH_j^*(\ZZ_\K)\cong HH_j^*(\ZZ_{\L^\sigma}).\]
                        
        \item  $(j=n+1)$ Using Lemma \ref{lemdegn}, all full subcomplexes of $\L^\sigma$ have trivial $\H_n$, and therefore $HH_{n+1}^*(\ZZ_{\L^\sigma})=0$. Thanks to the same lemma, the only full subcomplex of $\K$ with non-trivial $\H_n$ is $\K$ itself, meaning that $HH_{n+1}^*(\ZZ_{\L^\sigma})\cong \F$ and so it follows that 
        \[\text{dim }(HH_{n+1}^* (\ZZ_{\L^\sigma}))=\text{dim }(HH_{n+1}^*(\ZZ_\K))-1.\]

        \item  $(j=n)$ Let $I\subseteq [m]$, we have that $\K_I=\L^\sigma_I\cup\gen{\sigma}_I$. Using the Mayer-Vietoris long exact sequence (see Proposition 2.3) we get that 
        \[\begin{tikzcd}
            {\H_{n-1}(\L^\sigma_I\cap\gen{\sigma}_I)} & {\H_{n-1}(\L^\sigma_I)\oplus\H_{n-1}(\gen{\sigma}_I)} & {\H_{n-1}(\K_I)} & {\H_{n-2}(\L^\sigma_I\cap\gen{\sigma}_I)}
	\arrow["{\varphi^*}", from=1-1, to=1-2]
	\arrow["{\psi^*}", from=1-2, to=1-3]
	\arrow[from=1-3, to=1-4]
        \end{tikzcd}\]
        The complex $\gen{\sigma}_I$ is contractible, so $\H_*(\gen{\sigma}_I)=0$, in this view, we can consider $\psi^*$ as a restriction of $f_n$. Finally, we have that 
        \[\L^\sigma_I\cap \gen{\sigma}_I=(\L^\sigma\cap\gen{\sigma})_I=(\partial\sigma)_I\]
        so we get the exact sequence

        \[\begin{tikzcd}
	\H_{n-1}((\partial\sigma)_I) & {\H_{n-1}(\L^\sigma_I)} & {\H_{n-1}(\K_I)} & 0
	\arrow["{\varphi^*}", from=1-1, to=1-2]
	\arrow["f_n", from=1-2, to=1-3]
	\arrow[from=1-3, to=1-4]
\end{tikzcd}\]

        From this and the Hochster decomposition we get that $f_n$ is surjective. The kernel of  $f_n$ equals the image of $\varphi^*$, further, this is non-trivial only if the following conditions are met:
        \begin{itemize}
            \item $\sigma\subseteq I$ as otherwise $(\partial\sigma)_I$ is contractible.
            \item $I\neq [m]$, as this would make $\L^\sigma_I$ a disk making the image of $\varphi^*$ trivial.      
        \end{itemize}
        If the first condition is fulfilled, $(\partial\sigma)_I\cong S^{n-1}$ and so, $\H_{n-1}((\partial\sigma)_I)=\Z\gen{d^{simp}(\sigma)}$. Assume $I$ satisfies all conditions and let $x\in [n+2,m]\setminus I$, we have the following commutative diagram of inclusions (left) which induces a commutative diagram in reduced homology (right)
        \[\begin{tikzcd}
	{(\partial\sigma)_I} & {\L^\sigma_{I}} && {\H_{n-1}((\partial\sigma)_I)} & {\H_{n-1}(\L^\sigma_I)} \\
	& {\L^\sigma_{[m]\setminus\{x\}}} &&& {\H_{n-1}( \L^\sigma_{[m]\setminus\{x\}})}
	\arrow[hook, from=1-1, to=1-2]
	\arrow[hook, from=1-1, to=2-2]
	\arrow[hook, from=1-2, to=2-2]
	\arrow["{\varphi^*}", from=1-4, to=1-5]
	\arrow["g"', from=1-4, to=2-5]
	\arrow[from=1-5, to=2-5]
\end{tikzcd}\]
        Notice that $\L^\sigma_{[m]\setminus\{x\}}$ is an annulus around $\partial\sigma$, meaning that $g$ has to be an isomorphism. Therefore, $\varphi^*$ has non-trivial image in $\H_{n-1}(\L^\sigma_I)$. Putting this all together we consider the kernel of $f_n$ as the cochain complex given by
    \[(\ker f_n)^l\cong\left\{ \begin{array}{cl}
         \bigoplus\limits_{\substack{J\in \T\\|J|=l}} \Z_J& \text{for }n+1\leq l<m \\
         0 &\text{else} 
    \end{array}\right.\]
    where $\T=\{J\subseteq [m]: \sigma\subseteq J\}$ and the field $\F_J$ is generated by the homology class of the cycle $d^{simp}(\sigma)\in \tilde{C}_{n-1}(\L_J)$.\\

        We define the function $w:\P([m])\to \Z_{\geq 0}$ given by $w(J)=|J\cap[m-1]|$, this defines a bounded decreasing filtration given as follows \[F_p=\bigoplus_{\substack{J\in\T\\w(J)\geq p}}\F_J.\]
    The associated spectral sequence converges to $H(\text{ker }f_n)$ and has $E_0$ page given by
    \[E_0^{p,q}=\frac{F^{p+q}_p}{F^{p+q}_{p+1}}\cong \bigoplus_{\substack{J\in \T\\w(J)=p\\|J|=p+q}}\F_J\]
    but as $|J\setminus (J\cap[m-1])|\leq 1$, then $|J|-w(J)\leq 1$ and so $q$ can only be either $0$ or $1$, meaning that
    \begin{align*}
        E_0^{p,0}&=\bigoplus_{\substack{J\in \T\\|J|=p\\m\notin J}} \F_J\\
        E_0^{p,1}&=\bigoplus_{\substack{J\in \T\\J\neq [m-1]\\|J|=p\\m\notin J}}\F_{J\cup \{m\}}\\
    \end{align*}
    But the differential maps $\F_J\to \F_{J\cup\{m\}}$ isomorphically, except when $J=[m-1]$, where it's the $0$ map. The spectral sequence then converges in $E_1$ to
        \[E^{p,q}_\infty\cong E^{p,q}_1\cong\left\{\begin{array}{cc}
             \F_{[m-1]}& \text{ for }(p,q)=(m-1,0)  \\
             0 & \text{ else.}
        \end{array}\right.\]
    and therefore \[H^l(\text{ker }f_n)\left\{\begin{array}{cc}
        \F_{[m-1]} & \text{for }l=m-1 \\
        0  & \text{else.}
    \end{array}\right.\]
    Now, as $f_k$ is surjective, there's a short exact sequence of cochain complexes given by
    \[\begin{tikzcd}[ampersand replacement=\&]
	0 \& {\text{ker }f_n} \& {CH_{n}^*(\ZZ_{\L^\sigma})} \& {CH_{n}^*(\ZZ_{\K})} \& 0
	\arrow["{f_n}", from=1-3, to=1-4]
	\arrow["{i}", from=1-2, to=1-3]
	\arrow[from=1-4, to=1-5]
	\arrow[from=1-1, to=1-2]
\end{tikzcd}\]
which induces a long exact sequence in cohomology. Our previous computation implies that $HH_n^l(\ZZ_\L)\cong HH_n^l(\ZZ_\K)$ whenever $l\notin\{m-2,m-1\}$, meaning the only nontrivial part of the long exact sequence is the following.
\[\begin{tikzcd}[ampersand replacement=\&]
	0\&{HH_{n}^{m-2}(\ZZ_{\L^\sigma})} \& {HH_n^{m-2}(\ZZ_\K)}\&{\F_{[m-1]}} \& {HH_{n}^{m-1}(\ZZ_{\L^\sigma})} \& {HH_{n}^{m-1}(\ZZ_\K)}\&0
	\arrow["{f^*_n}", from=1-2, to=1-3]
	\arrow["\delta", from=1-3, to=1-4]
	\arrow["", from=1-1, to=1-2]
    \arrow["{f^*_n}", from=1-5, to=1-6]
	\arrow["i^*", from=1-4, to=1-5]
	\arrow["", from=1-6, to=1-7]
\end{tikzcd}\]
    However, from duality we have that 
    \[HH_{n}^{m-1}(\ZZ_\K)\cong HH_{1}^1(\ZZ_\K)=0\]
    then the long exact sequence is simplified to
    \begin{equation*}\label{exq}
        \begin{tikzcd}[ampersand replacement=\&]
	0\&{HH_{n}^{m-2}(\ZZ_{\L^\sigma})} \& {HH_n^{m-2}(\ZZ_\K)}\&{\F_{[m-1]}} \& {HH_{n}^{m-1}(\ZZ_{\L^\sigma})} \& 0
	\arrow["{f^*_n}", from=1-2, to=1-3]
	\arrow["\delta", from=1-3, to=1-4]
	\arrow["", from=1-1, to=1-2]
    \arrow["", from=1-5, to=1-6]
	\arrow["i^*", from=1-4, to=1-5]
\end{tikzcd}
    \end{equation*}
    The rank-nullity theorem for finitely dimensional vector spaces together with exactness of the previous sequence implies
        \begin{align*}
            \text{dim }HH_n^{m-2}(\ZZ_{\L^\sigma})=\text{rank }(f^*_n)=\text{null }(\delta)=HH_n^{m-2}(\ZZ_\K)-\text{rank }(\delta)
        \end{align*}
        and on the other hand
        \begin{align*}
            \text{dim }HH_{n}^{m-1}(\ZZ_\L)=\text{rank }(i^*)=1-\text{null }(i^*)=1-\text{rank }(\delta)
        \end{align*}
    \end{itemize}
    Putting all the cases together gives us that dim $(HH_{*}(\ZZ_\L))=\text{ dim }(HH_*(\ZZ_\K))-2\text{ rank }(\delta)$. Now we'll study the connecting homomorphism $\delta$.\\

    If $\K$ is neighborly, then $CH_{1}^2(\ZZ_\K)=0$, as every full subcomplex of 2 elements is contractible. Using duality we have that 
    \[HH_{n}^{m-2}(\ZZ_\K)\cong HH_{1}^2(\ZZ_\K)=0\]
    and therefore rank $(\delta)=0$, which implies $(i)$.
    
    Assume that $\K$ isn't neighborly. We can assume without loss of generality that $1$ and $m$ aren't adjacent and that $[2,n+1]\in\K$. Consider the generator $[\alpha]$ of $\H_{n-1}(\K_{[2,m-1]})$ where $\alpha$ is the cycle corresponding to $lk(m)$ in $\K$. Notice that we can do this as $\K_{[2,m-1]}$ is an annulus because $m$ and $1$ aren't adjacent. Further, notice that $[\alpha]$ is a cycle in double homology as $\H_{n-1}(\K_{[2,m]})=\H_{n-1}(\K_{[m-1]})=0$ and therefore $d([\alpha])=0$. To see what happens to the class in double homology, we go back to the snake lemma.

\[\begin{tikzcd}
	&& \textcolor{rgb,255:red,122;green,31;blue,31}{[\alpha] \in\H_{n-1}(\K_{[2,m-1]}} && \textcolor{rgb,255:red,122;green,31;blue,31}{[\alpha]\in\H_{n-1}(\K_{[2,m-1]}} \\
	{(\text{ker }f_n)^{m-2}} && {CH_n^{m-2}(\ZZ_{\L^\sigma})} && {CH_n^{m-2}(\ZZ_\K)} \\
	{(\text{ker }f_n)^{m-1}} && {CH_n^{m-1}(\ZZ_{\L^\sigma})} && {CH_n^{m-1}(\ZZ_\K)} \\
	\textcolor{rgb,255:red,122;green,31;blue,31}{\pm[d^{simp}(\sigma)]\in \mathbb{F}_{[m-1]}} && \textcolor{rgb,255:red,122;green,31;blue,31}{\pm[d^{simp}(\sigma)]=[\alpha] \in \H_{n-1}(\L^\sigma_{[m-1]})}
	\arrow[color={rgb,255:red,122;green,31;blue,31}, curve={height=44pt}, from=1-3, to=4-3]
	\arrow[color={rgb,255:red,122;green,31;blue,31}, curve={height=24pt}, from=1-5, to=1-3]
	\arrow[hook, from=2-1, to=2-3]
	\arrow["{f_n}", from=2-3, to=2-5]
	\arrow["d", from=2-3, to=3-3]
	\arrow[hook, from=3-1, to=3-3]
	\arrow["{f_n}", from=3-3, to=3-5]
	\arrow[color={rgb,255:red,122;green,31;blue,31}, curve={height=-24pt}, from=4-3, to=4-1]
\end{tikzcd}\]

    We lift $[\alpha]$ to itself in $\H_{n-1}(\L^\sigma_{[2,m-1]})$, we can do this as $\L^\sigma_{[2,m-1]}=\K_{[2,m-1]}$. To apply the differential in double homology, notice that $\H_{n-1}(\L^\sigma_{[2,m]})=0$ as this full subcomplex is a triangulation of a disk, then $d([\alpha])= [\alpha]\in \H_{n-1}(\L^\sigma_{[m-1]})$. However, notice that $\L_{[m-1]}$ is an annulus where the hole has boundary either $lk(m)$ or the boundary of $\sigma$, then $[\alpha]=\pm[d^{simp}(\sigma)]\in \H_{n-1}(\L^\sigma_{[m-1]})$. Lifting to ker $f_n$, gives us the generator of $\F_{[m-1]}$, and taking the quotient to cohomology yield's a non-zero element of $H^{m-1}(\text{ker }f_n)$. Finally, as $\F$ is a field then rank$(\delta)=1$, meaning that \[\text{dim }(HH_{*}(\ZZ_{\L^\sigma}))=\text{ dim }(HH_*(\ZZ_\K))-2.\]

\end{proof}
    \begin{exm}
        For $n\geq 2$, given an $n$-sphere triangulation $\K$ with $\delta(\K)=n+1$, for every maximal simplex $\sigma\in \K$, thanks to Theorem \ref{3minvert} we have that 
        \[HH_{*}(\ZZ_{\K\setminus\{\sigma\}};\F)\cong \F_{(0,0)}\oplus \F_{(-1,4)}.\]
        One can show that none of these simplicial complexes are wedge-decomposable. This construction includes Example 5.6 in \cite{wedge-dec} by taking $\K$ to be the suspension of a triangle.
    \end{exm}
    
%\begin{exm}
    %Using the previous theorem and the examples discussed in %section 4.2, we can construct simplicial complexes with %double homology rank of 
    %\[12^a\left(\prod_{(w,x,y,z)\in J}(2^w12^x20^y36^z-%2)\right)\]
    %for any finite sequence $J$ in $\Z^3_{\geq 0}$ and $a\in %\Z_{\geq 0}$. 
%\end{exm}
\begin{note}
    Theorem 5.2 has a similar flavor to Theorem 1 in \cite{zhang2024rank}. In their construction, adding a simplex decreases the rank by 2, but here, adding a simplex increases the rank by 2. 
\end{note}

\begin{note} Looking at the proof, Theorem \ref{removeface} $(ii)$ not only says that such a $\sigma$ exists, but the rank will drop if and only if $\sigma$ is chosen such that its vertices aren't adjacent to every other vertex in the complex. This suggests that for sphere triangulations, being neighborly is closely related to $HH_{n}^{m-2}(\ZZ_\K)$ and to its dual, $HH_{-1,4}(\ZZ_\K)$. It turns out that this is true not only for triangulations, but for every simplicial complex.
\end{note}
\begin{lem}\label{restriction}
    Let $\K$ be a $p$-neighborly simplicial complex on $[m]$. Then for any $I\subseteq [m]$ where $|I|=p+1$, the restriction map $CH_d^{p+1}(\ZZ_\K)\to CH_d^{p+1}(\ZZ_{\K_I})$ induces a surjective map in double homology.
\end{lem}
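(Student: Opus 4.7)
My plan is to make the restriction map explicit at the level of Hochster summands and then exploit the strong vanishing imposed by $p$-neighborliness, which forces both source and target to be zero in the relevant upper-degree.

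First I would identify the restriction map. The coordinate projection $(D^2,S^1)^m \to (D^2,S^1)^I$ carries $\ZZ_\K$ into $\ZZ_{\K_I}$ (since $\sigma \cap I \in \K_I$ for every $\sigma \in \K$, using that $\K_I$ is full). Via the Hochster decomposition, the induced map in homology is the projection $\bigoplus_{J \subseteq [m]} \tilde{H}_*(\K_J) \to \bigoplus_{J \subseteq I} \tilde{H}_*(\K_J)$ onto those summands with $J \subseteq I$. A quick check using the formula for $d_{-k,2l}$ in Construction 3.2 confirms that this projection commutes with the differential: if $J \not\subseteq I$ then $J \cup \{x\} \not\subseteq I$ for every $x$, so outside-$I$ summands are preserved by $d$ and annihilated by the projection. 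This yields a genuine cochain map $\pi: CH_d^*(\ZZ_\K) \to CH_d^*(\ZZ_{\K_I})$ inducing a well-defined map in double homology.

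Next I would compute the relevant terms. Because $|I| = p+1$, the only $J \subseteq I$ with $|J| = p+1$ is $I$ itself, so
\[CH_d^{p+1}(\ZZ_{\K_I}) \cong \tilde{H}_{d-1}(\K_I).\]
The hypothesis of $p$-neighborliness forces $\K_I = \Delta^p$, which is contractible, so $\tilde{H}_{d-1}(\K_I) = 0$ for every $d$. The same argument applied summand-wise shows that $CH_d^{p+1}(\ZZ_\K) = \bigoplus_{|J|=p+1} \tilde{H}_{d-1}(\K_J)$ also vanishes, since each $\K_J = \Delta^p$ is contractible.

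With both source and target of the cochain map zero in upper-degree $p+1$, so are the corresponding cohomology groups, and the induced map $HH_d^{p+1}(\ZZ_\K) \to HH_d^{p+1}(\ZZ_{\K_I})$ is the zero map on zero groups, hence vacuously surjective. The only real obstacle in the argument is verifying in the first step that the projection commutes with the double-homology differential; once that is done, the vanishing forced by $p$-neighborliness makes the conclusion immediate.
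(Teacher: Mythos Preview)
Your argument is correct for the statement exactly as printed, but it then proves something vacuous: under a genuine $p$-neighborliness hypothesis every $\K_J$ with $|J|=p+1$ is $\Delta^p$, so both $CH_d^{p+1}(\ZZ_\K)$ and $CH_d^{p+1}(\ZZ_{\K_I})$ vanish and there is nothing to show. The paper's own proof, and the sole application of the lemma in Theorem~5.6, make clear that the intended hypothesis is \emph{$(p-1)$-neighborly}: the proof opens by disposing of the case $I\in\K$ and then assuming $I\notin\K$ (impossible if $\K$ were $p$-neighborly with $|I|=p+1$), and Theorem~5.6 invokes the lemma precisely for a $\K$ that is $(p-1)$-neighborly but not $p$-neighborly, where $\K_I=\partial\Delta^p$ and $HH_p^{p+1}(\ZZ_{\K_I})\cong\Z$ is nonzero.

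Under the intended hypothesis your vanishing argument no longer applies: $\K_J$ need not be contractible for $|J|=p+1$, the target is genuinely nonzero, and one must actually produce a cycle in $CH_p^{p+1}(\ZZ_\K)$ hitting the generator $\alpha_I$. The paper does this explicitly. Working first in $\L=sk_{p-1}\Delta^{m-1}$, it sets $\sigma=\sum_{|J|=p+1}\alpha_J$ with $\alpha_J=[d^{simp}(\sigma(J))]\in\H_{p-1}(\L_J)$, checks $d(\sigma)=0$ by regrouping the double sum and invoking $d^{simp}\circ d^{simp}=0$, pushes forward along $\L\hookrightarrow\K$ to obtain a cycle in $CH_p^{p+1}(\ZZ_\K)$, and then observes that restricting to $I$ sends this cycle to $\alpha_I$. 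Your description of the restriction map as the projection onto Hochster summands indexed by $J\subseteq I$ is correct and is used implicitly in the paper, but the substantive content of the lemma---the construction of the preimage cycle---is missing from your proposal.
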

\begin{proof}
    The argument works for any coefficients, in this proof we'll use integers. If $I\in \K$ the result is trivial as $CH(\ZZ_{\K_I})=0$, so we assume $I\notin\K$. 
    Let $\L=sk_{p-1}\Delta^{m-1}$, and set 
    \[\sigma:=\sum_{\substack{J\subseteq[m]\\|J|=p+1}}\alpha_J\in CH_p^{p+1}(\ZZ_\K)\]
    where $\alpha_J\in \H_{p-1}(\K_J)$ is the generated class presented by $\alpha_J=[d^{simp}(\sigma(J))]$ where $d^{simp}$ is the simplicial differential and $\sigma(J)$ is the $p$-simplex indexed by $J$. We'll show $\sigma$ is a cycle in $CH(\ZZ_\L)$. We apply the differential of CH and get 
    \[d(\sigma)=\sum_{\substack{J\subseteq [m]\\|J|=p+1}}\sum_{x\in [m]\setminus J}(-1)^{\varepsilon(x,J)}\Phi_{J,x;d-1}(\alpha_J)\]
    it's worth noticing that the included $\alpha_J$ has the same description but this time inside $\H_{d-1}(\K_{J\cup\{x\}})$. As $\varepsilon(x,J)=\varepsilon(x,J\cup\{x\})$, we can reorder the sum as follows
    \[d(\sigma)=\sum_{\substack{J\subseteq [m]\\|J|=d+2}}\left(\sum_{x\in J}(-1)^{\varepsilon(x,J)}[d^{simp}(\sigma(J\setminus\{x\})]\right)\]
    Notice that each of the summands lies in $\H_{d-1}(\K_J)$, also, by linearity
    \[\sum_{x\in J}(-1)^{\varepsilon(x,J)}d^{simp}(\sigma(J\setminus\{x\}))=d^{simp}\left(\sum_{x\in J}(-1)^{\varepsilon(x,J)}\sigma(J\setminus\{x\})\right)=d^{simp}d^{simp}\sigma(J)=0\]
    where $\sigma(J)$ is the $(d+1)$-simplex indexed by $J$. This means that $d(\sigma)=0$ and therefore, $\sigma$ is a cycle in $CH(\ZZ_\L)$.\\

    Now, if $\phi:CH(\ZZ_\L)\to CH(\ZZ_\K)$ is the map induced by the inclusion, from functoriality we get that $\phi(\sigma)$ is a cycle in $CH(\ZZ_\K)$, further it's non-zero, as \[\phi(\sigma)=\sum_{J\in\mathcal{J}}\alpha_J\text{ where }\mathcal{J}=\{J\subseteq[m]:|J|=p+1\text{ and }J\notin \K\}\neq \emptyset.\]     
    
    Now, as $\K$ is $p$-neighborly, it's a simple computation to see that $HH_{p}^{p+1}(\ZZ_{\K_I})=\Z\alpha_I$. Then finally, the restruction map $\psi:CH(\ZZ_\K) \to CH(\ZZ_{\K_I})$ maps $\phi(\sigma)$ to a class represented by $\alpha_{I}$, which completes the proof.
\end{proof}

\begin{thm}\label{neigh}
A $(p-1)$-neighborly simplicial complex $\K$ is $p$-neighborly if and only if 
        \[HH_{-1,2p+2}(\ZZ_\K;\Z)=0\]
or equivalently
\[HH_{p}^{p+1}(\ZZ_\K;\Z)=0\]
\end{thm}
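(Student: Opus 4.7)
The plan is to split the equivalence into its two directions; one is an immediate consequence of the definitions while the other uses Lemma \ref{restriction} to promote a nonzero class on a restricted subcomplex to one on $\K$. If $\K$ is $p$-neighborly, then for every $J \subseteq [m]$ with $|J| = p+1$ we have $\K_J = \Delta^p$, which is contractible, so $\H_{p-1}(\K_J) = 0$. The Hochster decomposition then gives $CH_p^{p+1}(\ZZ_\K) = \bigoplus_{|J|=p+1}\H_{p-1}(\K_J) = 0$, and hence $HH_p^{p+1}(\ZZ_\K) = 0$. No further machinery is needed for this direction.

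For the converse I will argue the contrapositive. Assume $\K$ is $(p-1)$-neighborly but not $p$-neighborly, and pick some $I \subseteq [m]$ with $|I| = p+1$ and $I \notin \K$. The $(p-1)$-neighborliness forces every proper subset of $I$ to be a face of $\K$, so $\K_I = \partial\Delta^p$ and $\H_{p-1}(\K_I) \cong \Z$ is generated by $\alpha_I = [d^{simp}(\sigma(I))]$. For the restricted double complex, both $CH_p^p(\ZZ_{\K_I})$ and $CH_p^{p+2}(\ZZ_{\K_I})$ vanish: in the first case because each $(\K_I)_J$ with $|J|=p$ equals $\Delta^{p-1}$, in the second because $|I|=p+1$ admits no subset of size $p+2$. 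Hence both the incoming and outgoing coboundaries at bidegree $(p,p+1)$ are zero, and $HH_p^{p+1}(\ZZ_{\K_I}) \cong \Z$.

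Finally I would invoke Lemma \ref{restriction} for the pair $\K_I \subseteq \K$: it supplies a surjection $HH_p^{p+1}(\ZZ_\K) \twoheadrightarrow HH_p^{p+1}(\ZZ_{\K_I}) \cong \Z$, so $HH_p^{p+1}(\ZZ_\K) \neq 0$, completing the contrapositive. The only delicate point is verifying that Lemma \ref{restriction} applies at exactly the bidegree $(p,p+1)$ in question; once one checks that the assembled cycle $\sum_{|J|=p+1}\alpha_J$ constructed in the proof of the lemma restricts to the single class $\alpha_I$ on $\K_I$, the rest of the argument is automatic, and the dual vanishing $HH_{-1,2p+2}(\ZZ_\K) = 0$ follows by the reindexing $HH_j^l = HH_{j-l,2l}$.
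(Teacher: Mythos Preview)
Your proof is correct and follows essentially the same route as the paper: the forward direction is immediate from the Hochster decomposition, and the converse proceeds by contrapositive, exhibiting $\K_I=\partial\Delta^p$ for some missing face $I$ and then invoking Lemma~\ref{restriction} to get a surjection $HH_p^{p+1}(\ZZ_\K)\twoheadrightarrow HH_p^{p+1}(\ZZ_{\K_I})\cong\Z$. Your write-up is in fact slightly more explicit than the paper's in justifying why $HH_p^{p+1}(\ZZ_{\K_I})\cong\Z$ (by checking the adjacent cochain groups vanish).
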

    \begin{proof}\phantom{}\\
        \begin{itemize}
            \item [$(\implies)$] As it's $p$-neighborly then $\H_{p-1}(\K_{J};\Z)=0$ for any $J$ with $|J|=p$, then \[CH_{-1,2p+2}(\ZZ_\K;\Z)=CH_p^{p+1}(\ZZ_\K;\Z)=0.\]
            \item [$(\impliedby)$] Assume it's not $p$-neighborly, then there's a subset $I$ withh $|I|=p+1$ with $\K_I=\partial\Delta^{p}$, then $CH_{-1,2p+2}(\ZZ_{\K_I};\Z)\cong \Z$. From Lemma \ref{restriction} we get that there's a surjective map \[HH_{-1,2p+2}(\ZZ_\K;\Z)\to HH_{-1,2p+2}(\ZZ_{\K_I};\Z)\cong \Z\] and therefore $HH_{-1,2p+2}(\ZZ_\K;\Z)\neq 0$.
        \end{itemize}
    \end{proof}
\begin{cor}
    Let $\K$ be an $n$-sphere triangulation on $[m]$ and $\F$ a field, then rank $HH_*(\ZZ_\K;\F)=2$ if and only if $\K=\partial\Delta^{n+1}$
\end{cor}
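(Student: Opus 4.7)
The easy direction is a direct Hochster calculation: if $\K=\partial\Delta^{n+1}$ then $m=n+2$, every proper full subcomplex $\K_J$ equals the simplex $\Delta_J$ (hence contractible), and $\K_{[m]}\cong S^n$ contributes a single class in $\H_n$. Thus $CH_*(\ZZ_\K;\F)$ has exactly two nonzero generators, at bidegrees $(j,l)=(0,0)$ and $(n+1,n+2)$; the differential has nowhere to send them, and so $HH$ has rank~$2$.

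For the converse, the plan is to combine Theorem~\ref{duality} with an iterated use of Theorem~\ref{neigh}. For any nonempty $\K$ the empty subset gives $HH_0^0(\ZZ_\K;\F)\cong\F$, and duality supplies a second nonzero class $HH_{n+1}^m(\ZZ_\K;\F)\cong\F$. These bidegrees are always distinct (since any $n$-sphere triangulation has $m\ge n+2$), so rank $HH_*\ge 2$ for free. Under the hypothesis of rank exactly $2$, every other $HH_j^l$ must vanish; in particular, for each $p\in\{1,\ldots,n\}$ the bidegree $(p,p+1)$ coincides with neither $(0,0)$ nor the dual class $(n+1,m)$ (because $p+1\le n+1<m$), and therefore $HH_p^{p+1}(\ZZ_\K;\F)=0$.

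I would then apply Theorem~\ref{neigh} inductively. The complex $\K$ is trivially $0$-neighborly, and the implication ``$(p-1)$-neighborly together with $HH_p^{p+1}=0$ implies $p$-neighborly'' can be iterated for $p=1,2,\ldots,n$. The conclusion is that $\K$ is $n$-neighborly; since every face of $\K$ has cardinality at most $n+1$, this forces $\K=sk_n\Delta^{m-1}$. A routine long-exact-sequence computation on the pair $(\Delta^{m-1},sk_n\Delta^{m-1})$ yields $\H_n(sk_n\Delta^{m-1};\F)\cong\F^{\binom{m-1}{n+1}}$, which matches $\H_n(S^n;\F)=\F$ only when $m=n+2$. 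Therefore $\K=\partial\Delta^{n+1}$.

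The main subtlety, and the only step beyond bookkeeping of bidegrees, is that Theorem~\ref{neigh} is stated over $\Z$ whereas the corollary is formulated over an arbitrary field. Its proof, however, relies only on Lemma~\ref{restriction} (whose argument the author explicitly notes works with any coefficient ring) and the elementary identification $\H_{p-1}(\partial\Delta^p;\F)\cong\F$, so the same proof furnishes the $\F$-coefficient version verbatim.
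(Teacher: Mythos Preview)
Your proof is correct and follows the same overall strategy as the paper: the easy direction is identical, and for the hard direction both arguments combine duality with an iterated application of Theorem~\ref{neigh} to force $\K$ to be $n$-neighborly. The only genuine difference is the final step, extracting $m=n+2$ from $n$-neighborliness. You observe that $\K=sk_n\Delta^{m-1}$ and compute $\H_n(sk_n\Delta^{m-1};\F)\cong\F^{\binom{m-1}{n+1}}$, which must equal $\H_n(S^n;\F)\cong\F$ and hence $\binom{m-1}{n+1}=1$. The paper instead uses the combinatorial relation $(n+1)f_n=2f_{n-1}$ (each codimension-one face of a triangulated $n$-sphere lies in exactly two top faces), which together with $f_i=\binom{m}{i+1}$ yields $m-n=2$ directly. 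Both endings are short and valid; yours stays within the homological language already in play, while the paper's is pure face-counting. You are also slightly more careful than the paper in explicitly flagging and resolving the $\Z$-versus-$\F$ coefficient mismatch when invoking Theorem~\ref{neigh}.
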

\begin{proof}
    \begin{itemize}
        \item [$(\impliedby)$] All the full subcomplexes of $\K$ are contractible, meaning that
        \[CH_j^l(\ZZ_\K;\F)\cong\left\{\begin{array}{cl}
            \F & \text{for }(j,l)=(0,0) \text{ or }(n+1,m)  \\
            0 & \text{else}
        \end{array}\right.\]
        so it coincides with the double homology.

        \item [$(\implies)$] Since $\ZZ_\K$ has no ghost vertices, $HH_0^0(\ZZ_\K;\F)\cong \F$. From duality
        \[HH_j^l(\ZZ_\K;\F)\cong\left\{\begin{array}{cl}
            \F &\text{for }(j,l)=(0,0)\text{ or }(n+1,m)  \\
            0 & \text{else.}
        \end{array}\right.\]
        This means that, from Theorem \ref{neigh}, $\K$ has to be $n$-neighborly, as otherwise, we'd have a non-trivial component in bidegree $(p,p+1)$ for some $p\in[n]$.\\
        
        Let $f_i$ denote the number of $i$-simplices in $\K$. Since $\K$ is $n$-neighborly $f_i={m\choose i+1}$ for $i\in[n]$. Since $\K$ is a sphere, every maximal simplex contains $n+1$ facets, and each of those facets is contained in exactly 2 maximal simplices, meaning that $(n+1)f_n=2f_{n-1}$, or equivalently $(n+1){m\choose n+1}=2{m\choose n}$. On the other hand we have that 
        \begin{align*}
            (n+1){m\choose n+1}=(n+1)\frac{m!}{(n+1)!(m-n-1)!}=(m-n)\frac{m!}{n!(m-n)!}=(m-n){m\choose n}
        \end{align*}
        and therefore $m-n=2$, and the only such sphere triangulation is $\partial \Delta^{n+1}$.
        \end{itemize}
\end{proof}

%%%%%%%%%%%%%%%%%%%%%%%%%%%%%%%%%%%%%%%%%%%%%%%%%%%%%%%%%%%%%%%%%%%%%%%%%%

\bibliographystyle{alpha}
\bibliography{ref.bib}

\end{document}